\newtheorem {definition}{Definition}
\newtheorem{proposition}[definition]{Proposition}
\newtheorem {lemma}[definition]{Lemma}
\newtheorem {example}[definition]{Example}
\newtheorem*{problem}{Problem}
\newtheorem {theorem}[definition]{Theorem}
\newtheorem {rem}[definition]{Remark}
\author{Leif D\"oring}
\thanks{The author was supported by the Fondation Science Mat\'ematiques de Paris}
\address{Leif D\"oring, Laboratoire de Probabilit\'es et Mod\'eles Al\'eatoires Universit\'e Paris 6, 4 place Jussieu, 75252 Paris Cedex 05, France}
\email{leif.doering@upmc.fr}
\numberwithin{definition}{section}
\numberwithin{equation}{section}
\newtheorem {cor}[definition]{Corollary}
\newtheorem*{notation}{Notations}
\newcommand{\cA}{{\mathcal A}}
\newcommand{\cB}{{\mathcal B}}
\newcommand{\cD}{{\mathcal D}}
\newcommand{\cG}{{\mathcal G}}
\newcommand{\cH}{{\mathcal H}}
\newcommand{\cN}{{\mathcal N}}
\newcommand{\cP}{{\mathcal P}}
\newcommand{\D}{\mathbb{D}}
\newcommand{\E}{{E}}
\newcommand{\N}{\mathbb{N}}
\newcommand{\R}{\mathbb{R}}
\newcommand{\eps}{\varepsilon}
\newcommand{\1}{\mathbf{1}}
\DeclareMathOperator{\sign}{sign}
\newcommand{\dd}{d}
\title{A Jump-Type SDE Approach to Real-Valued Self-Similar Markov Processes}
\begin{document}
\maketitle

\begin{abstract}
	In his 1972 paper, John Lamperti characterized all positive self-similar Markov processes as time-changes of exponentials of L\'evy processes. In the past decade the problem of classifying all non-negative self-similar Markov processes 		that do not necessarily have zero as a trap has been solved gradually via connections to ladder height processes and excursion theory.\\
	Motivated by the recent article \cite{CPR}, we classify via jump-type SDEs the symmetric real-valued self-similar Markov processes that only decrease the absolute value by jumps and leave zero continuously.\\
	 Our construction of these self-similar processes involves a pseudo excursion construction and singular stochastic calculus arguments ensuring that solutions to the SDEs spend zero time at zero to avoid problems caused by a "bang-bang" drift.
\end{abstract}
\section{Introduction and Main Results}

\subsection{The Classification Problem for Self-Similar Markov Processes}
Dating back to Lamperti's seminal article \cite{L}, the study of self-similar Markov processes (originally called semi-stable processes by Lamperti) with values in a subset $E$ of $\R$ has attracted a lot of attention. In what follows, we will only discuss self-similar Markov processes in $E=\R$ and $E=[0,\infty)$ and denote by $\D$ the space of c\`{a}dl\`{a}g functions $\omega:\R_+\to E$ (right continuous with left limits) endowed with the Borel sigma-field $\cD$ generated by Skorokhod's topology. A strong Markov family $(P^z)_{z\in E}$ on $(\D,\cD)$ is called self-similar if the coordinate
 process $Z_t(\omega):=\omega(t), t\geq 0,$ fulfills
 the following scaling property: 
 \begin{align}\label{self_sim}
   \text{the law of $(c^{-1}Z_{ct})_{t\geq 0}$ under $P^z$ is $P^{c^{-1}z}$}
 \end{align}
 for all $c>0$ and $z\in E$.
 Typically, a more general self-similarity definition is given, replacing the power $-1$ by $-a$ for some $a>0$. In this treatment we fix without loss of generality the index of self-similarity $a=1$ since the change between index $a$ and $1$ can be performed by taking the power $Z^{1/a}$. 
 We will say that $Z$ (or alternatively the law $(P^z)_{z\in E}$) is a
 \begin{itemize}
	 \item positive self-similar Markov process  if $E=[0,\infty)$ and $Z$ is trapped at zero,
	 \item non-negative self-similar Markov process if $E=[0,\infty)$,
 	 \item $\R\backslash \{0\}=:\R_*$-valued self-similar Markov process if $E=\R$ and $Z$ is trapped at zero,
	 \item real-valued self-similar Markov process if $E=\R$.
 \end{itemize}
According to this definition, a positive self-similar Markov process is not really a positive process but the above classification seems to be the most rigorous to separate the appearing cases. Note that, if  $T_0=\inf\{t\geq 0:Z_t=0\}$ denotes the first hitting time of zero and $(Z^\dag_t)_{t\geq 0}:=(Z_{t\wedge T_0})_{t\geq 0}$ the process obtained from $Z$ by absorption at $0$, then in our notation a non-negative self-similar Markov process contains a positive self-similar Markov process and analogously a real- contains an $\R_*$-valued self-similar Markov process.
 \smallskip
 
We are interested in the following problem:
 \begin{problem}
	Classify all positive, non-negative, $\R_*$- and real-valued self-similar Markov processes.
 \end{problem}
The first three instances of the problem have been resolved and only the last remains open. 
\subsubsection{(I) Lamperti's Classification of Positive Self-Similar Markov Processes}
The fundamental result in the classification theory of self-similar Markov processes is Lamperti's representation obtained in \cite{L}. Lamperti showed that there is a bijection between positive self-similar Markov processes and L\'evy processes, possibly killed at an independent exponential time $\zeta$. For a L\'evy process $\xi$, Lamperti's representation of positive self-similar Markov processes takes the form
 \begin{align}\label{LT}
   Z_t= z\exp\big(\xi_{\tau({tz^{-1})}} \big),\qquad 0\leq t<T_0,
 \end{align}
  where the random time-change is given by the generalized inverse of the exponential functional of $\xi$, that is
	\begin{align*}
		\tau(t):=\inf\bigg\{s\geq 0:\int_0^s \exp\left(\xi_r\right) dr>t\bigg\}.
	\end{align*}
It is important to add that $T_0$ is finite almost surely for all initial conditions $z>0$ precisely if $\xi$ drifts to $-\infty$ and in this case $\tau(T_0z^{-1})=\infty$. Consequently, if we suppose that $\xi$ is set to $-\infty$ at the killing time $\zeta$, then Equation (\ref{LT}) is equally valid for $t\geq 0$ with $Z_t=0$ for any $t\geq T_0$. A consequence of Lamperti's representation is that the Feller property holds on $(0,\infty)$ for any non-negative self-similar strong Markov process.

\begin{example}\label{example}
	The only positive self-similar Markov processes (with self-similarity index $a=1$) with continuous sample paths are solutions to the stochastic differential equations (SDEs)
 \begin{align}\label{sb}
		 Z_t = a\,d t+\sigma \sqrt{Z_t}\,d B_t,\quad t\leq T_0,
 \end{align}
for $a\in\R, \sigma\geq 0$. Their Lamperti transformed L\'evy processes are $\xi_t=\big(a-\frac{\sigma^2}{2}\big)t + \sigma B_t$.
\end{example}
Lamperti's representation was successfully applied for the study of stable L\'evy processes since it allows via the identity $T_0\stackrel{\mathcal L}{=}\int_0^\infty \exp(\xi_r)\,dr$ the study of the exponential functional of a L\'evy process via the first hitting time of a self-similar process. For the use of Lamperti's transformation for the study of the maximum of stable L\'evy processes we refer for instance to Patie \cite{P} or Kutznetsov and Pardo \cite{KP} and references therein.

\subsubsection{(II) Classification of Non-Negative Self-Similar Markov Processes}
We mentioned above that a non-negative self-similar Markov process $Z$ contains a unique positive self-similar Markov process $Z^\dag$ by absorbtion at zero. As a consequence, the classification problem for non-negative self-similar Markov processes is equivalent to finding all self-similar extensions of positive self-similar Markov processes. The task has been resolved in recent years; first, if the corresponding L\'evy process $\xi$ drifts to $-\infty$ (i.e. $T_0<\infty$ a.s.), and later if $\xi$ fluctuates or drifts to $+\infty$ (i.e. $T_0=\infty$ a.s.).
\smallskip

It has first been proved independently by Rivero \cite{R1}, \cite{R2} and Fitzsimmons \cite{F} that positive self-similar Markov processes that hit zero in finite time can be extended uniquely to a non-negative self-similar Markov process that leaves zero continuously if and only if the Cram\'er type condition 
 \begin{align}\label{cramer}
		\text{there is a }0<\theta<1\text{ such that }\Psi(\theta)=0
 \end{align}
 holds. Here, and in what follows, whenever well-defined 
 \begin{align*}
 	\Psi(\theta)=\log \E\big(e^{\theta \xi_1}; \zeta>1\big),\quad \theta\geq 0,
\end{align*}
 denotes the Laplace exponent of the L\'evy process $\xi$ (killed at $\zeta$) that occurs in Lamperti's representation. The proofs of Rivero and Fitzsimmons are based on Blumenthal's general theory of Markov extensions developed in \cite{Bl}.
 \smallskip

For positive self-similar Markov processes not hitting zero in finite time, the classification problem is to give conditions when (and how) the family $(P^z)_{z>0}$ can be extended continuously to $z=0$ so that the extended process remains self-similar and leaves zero. This challenging question was answered subsequently in Bertoin and Caballero \cite{BC}, Bertoin and Yor \cite{BY},
Caballero and Chaumont \cite{CC} and Chaumont et al. \cite{CKPR}: The law $(P^z)_{z>0}$ of a positive self-similar Markov process extends continuously to initial condition $z=0$ if and only if
\begin{align}\label{over}
	\text{ the overshoot process }(\xi_{T_x}-x)_{x\geq 0} \text{ converges weakly as }x\to\infty.
\end{align}	
	If the overshoot process does not converge, then the laws $P^z$ converge, as $z\to 0$, to the degenerate law concentrated at $+\infty$.\\ The necessity of Condition \eqref{over} is relatively easy to prove and the main difficulty lies in the construction of $P^0$ if Condition \eqref{over} is valid. For the simplest construction of the non-degenerate limiting law $P^0$ via L\'evy processes started from $-\infty$, 	we refer to Bertoin and Savov \cite{BS}. A jump-type SDE approach which motivated the present article was developed in \cite{DB}.
	
\subsection{(III) Classification of $\R_*$-Valued Self-Similar Markov Processes}
	In contrast to self-similar Markov processes with non-negative sample paths, less is known about the classification of self-similar Markov processes with real-valued sample paths. The analogue to Lamperti's representation, called Lamperti-Kiu representation, has recently been proved in Chaumont et al. \cite{CPR} completing earlier work of Kiu \cite{K1}, \cite{K2} and Chybiryakov \cite{C}. To the best knowledge of the author the classification of real-valued self-similar Markov processes that leave zero remains open.
	\smallskip	

	The main idea of the Lamperti-Kiu representation is as follows: due to the assumed c\`{a}dl\`{a}g property of sample paths, the times $H_n$ of the $n$-th change of sign 
	\begin{align}\label{ChangeTime}
		H_0=0,\quad H_n=\inf\{t>H_{n-1}: Z_t Z_{t-}<0\},\quad n\geq 1,
	\end{align}
	can only accumulate at $T_0$. In the random intervals $[H_n,H_{n+1})$ the real-valued self-similar Markov process reduces to a 		strictly positive or strictly negative self-similar Markov process to which Lamperti's transformation can be applied and leads to two (possibly different) sequences $\xi^{+,n}$ and $\xi^{-,n}$ of L\'evy processes. Using the strong Markov property of $Z$, independence of the sequence $\xi^{\pm,n}$ follows so that the 		Lamperti-Kiu representation is obtained by glueing a sequence of Lamperti representations. A crucial additional ingredient are jumps $\Delta Z_{H_n}$ that determine the random initial condition for the positive/negative self-similar Markov processes on $[H_n,H_{n+1})$.
	 Again by the strong Markov property it was shown that those jumps are independent and  the rate of their occurrence is determined by a random time-change as in Lamperti's representation \eqref{LT}. Loosely speaking, the time-change accelerates all jumps with a rate $1/|Z_{s-}|$ and, consequently, $Z$ changes sign infinitely often before and after touching zero. The jumps $\Delta Z_{H_n}$  add many difficulties to the classification and prevent a straight forward adaption of arguments developed for positive self-similar Markov processes.	 
	\smallskip
	
Our main results are for symmetric real-valued self-similar Markov processes, that is the law of $-Z$ under $P^z$ is $P^{-z}$. In the symmetric case the Lamperti-Kiu transformed L\'evy processes satisfy $\xi^{+,n}\stackrel{\mathcal L}{=}\xi^{-,n}$.
	\smallskip
	
	A formal description of the Lamperti-Kiu representation is rather unpleasant since the change of sign is coded in the underlying L\'evy process via an additional complex direction.  We follow a different approach based on jump-type SDEs that seems to be more tracktable.
\begin{notation}
Solutions to SDEs are always considered on a stochastic basis $(\Omega,\cG,(\cG_t)_{t\geq 0},P)$ that is rich enough to carry all appearing Brownian motions and Poisson point processes and satisfies the usual conditions.
All SDEs are driven by a $(\cG_t)$-standard Wiener process $B$ and an independent $(\cG_t)$-Poisson random measure $\mathcal N$. We will use {weak solutions}, i.e. $(\mathcal G_t)_{t\geq 0}$ adapted stochastic processes $(Z_t)_{t\geq 0}$ with almost surely c\`{a}dl\`{a}g sample paths that satisfy an SDE in integrated form almost surely.  If additionally $Z$ is adapted to the augmented filtration generated by $B$ and $\mathcal N$, then $Z$ is said to be a {strong solution}. Pathwise uniqueness holds if for any two weak solutions defined on the same probability space with the same standard Wiener process and Poisson random measure, they are indistinguishable. In several SDEs the sign-function
		\begin{align*}
			\sign(x):=\1_{\{x>0\}}-\1_{\{x\leq 0\}}
		\end{align*}
		is used. We say a stochastic process $Z$ does not spend time at zero if almost surely
		\begin{align*}
			\int_0^\infty \1_{\{Z_s=0\}}\,ds=0.
		\end{align*}
\end{notation}
Here is a reformulation of the main result of \cite{CPR} via jump-type SDEs which we only state for symmetric $\R_*$-valued self-similar Markov processes with the additional assumption
\begin{align*}
		\textbf{(A)}\quad P^z( |Z_s|\leq |Z_{s-}|,\,\, \forall s\geq 0)=1,\quad z\in\R_*.
	\end{align*}
	Assumption \textbf{(A)} excludes the possibility that jumps of $Z$ increase the absolute value or, equivalently, that the L\'evy processes $\xi^{+,n}\stackrel{\mathcal L}{=}\xi^{-,n}$ of the Lamperti-Kiu representation have positive jumps. A general non-symmetric version without Assumption \textbf{(A)} is given below in Proposition \ref{PropositionMain2}.

\begin{proposition}\label{PropositionMain}
	\textbf{(I)} There is a bijection between symmetric $\R_*$-valued self-similar Markov processes satisfying Assumption \textbf{(A)} and quintuples $(a,\sigma^2,\Pi,q,V)$ consisting of
	\begin{itemize}
		\item a triplet $(a,\sigma^2,\Pi)$ of a spectrally negative L\'evy process killed at rate $q$ with Laplace exponent $\Psi$,
		\item  a finite measure $V(du)$ on $[-1,0)$.
	\end{itemize}
	\textbf{(II)} For a quintuple $(a,\sigma^2,\Pi,q,V)$ a symmetric real-valued self-similar Markov processes issued from $z\in\R_*$ can be constructed as strong solution to
			\begin{align}\label{Equation}
		\begin{split}
			Z_t&=z+\bigg(\Psi(1)+\int_{-1}^0 (u-1) V(du)\bigg)\int_0^t\sign(Z_s)\,ds+\sigma\int_0^t \sqrt{|Z_s|}\,dB_s\\
			&\quad+\int_0^t \int_0^{\frac{1}{|Z_{s-}|}}\int_{-1}^1 Z_{s-}(u-1)(\mathcal{N-N'})(ds,dr,du)
		\end{split}
	\end{align}
	for $t\leq T_0$.
	Here, $B$ is a standard Brownian motion and $\mathcal N$ is a Poisson point process on $(0,\infty)\times (0,\infty)\times [-1,1]$ with intensity measure $\mathcal N'(ds,dr,du)=ds\otimes dr\otimes \bar\Pi(du)$ according to the piecewise 					definition
		\begin{itemize}
			\item $\bar\Pi_{\big|(0,1]}(du)$ is the image measure under $\R_{-}\ni u\mapsto e^u$ of $\Pi$,
			\item $\bar\Pi(\{0\})=q$,
			\item $\bar\Pi_{\big|[-1,0)}(du)=V(du)$.
		\end{itemize}
\end{proposition}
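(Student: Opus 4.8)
The plan is to establish the two halves of Proposition~\ref{PropositionMain} in tandem: for \textbf{(I)} I would read off a quintuple from a given process through the Lamperti--Kiu decomposition, and for \textbf{(II)} I would solve~\eqref{Equation} and check that its Lamperti--Kiu data is the prescribed quintuple; the two directions close up once one verifies that the construction in \textbf{(II)} inverts the classification map of \textbf{(I)}. The symmetry assumption throughout lets me handle the positive and negative sign-excursions with a single L\'evy process.

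For the classification map, let $Z$ be a symmetric $\R_*$-valued self-similar Markov process satisfying~\textbf{(A)}, started from $z>0$ say. On the interval $[0,H_1)$ before the first sign change, $Z$ is a strictly positive self-similar Markov process, so Lamperti's representation~\eqref{LT} produces a L\'evy process $\xi^{+}$, killed at the rate at which $Z$ either changes sign or jumps to $0$, measured on the Lamperti clock. Restricted to sign-preserving jumps, \textbf{(A)} says that $|Z|$ only jumps downwards, so $\xi^{+}$ has no positive jumps; write its killed Laplace exponent as $\Psi$ with characteristics $(a,\sigma^2,\Pi,q)$, where $q$ is the rate of jumps of $Z$ to $0$ under the Lamperti clock, and note $\xi^{-}\stackrel{\mathcal L}{=}\xi^{+}$ by symmetry. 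By~\textbf{(A)} the sign-change jumps satisfy $\Delta Z_{H_n}/Z_{H_n-}\in[-1,0)$, and they occur, after a time change with local rate $|Z_{s-}|^{-1}$ as in~\eqref{LT}, according to a finite measure on $[-1,0)$; I would call it $V(du)$, the normalization being fixed by requiring that the sign-change jump rate of $Z$ from level $z$ equal $|z|^{-1}V(du)$, exactly matching~\eqref{Equation}. This assigns to $Z$ a quintuple $(a,\sigma^2,\Pi,q,V)$, and glueing the Lamperti representations on the successive intervals $[H_n,H_{n+1})$ as in~\cite{CPR} shows that $(P^z)_{z\in\R_*}$ is recovered from it; hence the assignment is injective, and granted~\textbf{(II)} it is a bijection onto the set of all quintuples.

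For~\textbf{(II)} I would fix a quintuple, build $\bar\Pi$, $\mathcal N$ and $\mathcal N'$ as in the statement, and first record that $\int_{[-1,1]}(u-1)^2\,\bar\Pi(du)<\infty$ --- near $u=1$ one has $(e^x-1)^2\sim x^2$ with $\int x^2\,\Pi(dx)<\infty$, while the atom $q$ at $0$ and the measure $V$ on $[-1,0)$ are finite --- so that the compensated jump integral in~\eqref{Equation} is a locally square-integrable martingale as long as $|Z|$ stays bounded. To solve~\eqref{Equation} up to $T_0$ I would localize: on the region where $|Z|\in[\eps,1/\eps]$ all coefficients are Lipschitz, the drift $\sign(z)$ being constant along each sign-excursion and the jump rate $|z|^{-1}$ bounded, so a strong solution up to the exit of this region exists and is pathwise unique by classical theory for It\^o SDEs with finitely many jump types; letting $\eps\downarrow0$ and patching across the sign-change times produces a solution up to $T_0=\lim_n H_n$, with $Z_t\neq0$ for $t<T_0$ and $Z$ absorbed at $0$ afterwards, and pathwise uniqueness up to $T_0$ for the limiting equation then follows from a Yamada--Watanabe comparison absorbing the non-Lipschitz term $\sqrt{|Z|}$, while absence of explosion before $T_0$ comes from the Lamperti identification below.

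It remains to identify the solution and record its structural properties. Time-changing with $A_t=\int_0^t|Z_s|^{-1}\,ds$ and applying It\^o's formula for semimartingales with jumps to $\log|Z_{A^{-1}(\cdot)}|$ along a sign-excursion turns the $\sqrt{|Z|}$-term into $\sigma\,dB$, the drift into a constant, the jumps $Z_{s-}(u-1)$ with $u\in(0,1]$ into additive jumps of size $\log u=x$ with L\'evy measure $\Pi$, and the mass $q$ at $u=0$ into killing at rate $q$; the precise drift coefficient $\Psi(1)+\int_{-1}^0(u-1)\,V(du)$ in~\eqref{Equation} is exactly what makes this L\'evy process have killed Laplace exponent $\Psi$ once the jumps are compensated back in, and this accounting with the compensator is the one computation I expect to be delicate. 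The part of $\mathcal N$ with negative $u$ flips the sign at local rate $|Z_{s-}|^{-1}V(du)$, producing jumps $\Delta Z_{H_n}$ with law encoded by $V$; so $Z$ is a symmetric $\R_*$-valued self-similar Markov process with Lamperti--Kiu data the given quintuple, which closes the bijection. Self-similarity is immediate from the scaling of the coefficients: if $Z$ solves~\eqref{Equation} from $z$ then $c^{-1}Z_{ct}$, $t\geq0$, solves it from $c^{-1}z$, because $\mathcal N$ is invariant in law under the rescaling of its three coordinates that matches the scaling of the jump sizes and of the rate $|Z_{s-}|^{-1}$; pathwise uniqueness upgrades this to the law identity~\eqref{self_sim}, uniqueness in law also yields the strong Markov property, and adaptedness to $(B,\mathcal N)$ makes the solution strong. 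The main obstacle, as flagged, is the joint control near zero of the non-Lipschitz coefficient $\sqrt{|Z|}$, the exploding jump rate $|Z_{s-}|^{-1}$ and the accumulation of sign changes at $T_0$, together with pinning the drift constant so that the Lamperti transform has exponent exactly $\Psi$.
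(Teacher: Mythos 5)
Your proposal is correct in substance but runs part \textbf{(II)} in the opposite direction from the paper. The paper takes the Lamperti--Kiu representation of \cite{CPR} as the starting object: it writes $Z_t=z\exp(\mathcal E_{\tau(t|z|^{-1})})$, applies the L\'evy--It\=o decomposition to $\mathcal E$ and It\=o's formula to the exponential, and then --- this is the technical heart, following Proposition 3.13 of \cite{DB} --- absorbs the time change $\tau$ by manufacturing \emph{new} driving noises $W^2_\pm,\mathcal N^2_\pm,\mathcal M^2_\pm$ (using auxiliary uniform variables $R_n$ and spare Poisson measures to fill in the region $r|Z_{s-}|>1$), verifying via L\'evy's characterization and the compensator identity of Theorem II.1.8 of \cite{JS} that these are again a Brownian motion and Poisson random measures with the stated intensities; the SDE \eqref{Equation} then drops out by substitution. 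You instead solve \eqref{Equation} directly by localization away from zero and recover the L\'evy data by the inverse time change $A_t=\int_0^t|Z_s|^{-1}ds$ and It\=o's formula for $\log|Z|$. Both directions hinge on the same compensator bookkeeping that produces the constant $\Psi(1)+\int_{-1}^0(u-1)V(du)$ (in the paper this is Example \ref{e1}/2.3, adding and subtracting the finite compensator of the sign-changing and large jumps). What the paper's direction buys is that Markovianity, self-similarity and the bijection of \textbf{(I)} come for free from Theorem 6 of \cite{CPR}, so only the SDE identity needs proof; what your direction buys is a self-contained strong solution without the delicate reconstruction of time-changed noises, at the price of having to re-derive the Markov and scaling properties from pathwise uniqueness and then invoke the uniqueness half of \cite{CPR} to match quintuples. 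One cosmetic slip: in the localized region the jumps are not of ``finitely many types'' when $\Pi$ is infinite (small jumps accumulate near $u=1$), but since $\int(u-1)^2\bar\Pi(du)<\infty$ and the coefficients are Lipschitz there, the standard existence--uniqueness theory for SDEs driven by compensated Poisson random measures applies and your argument goes through.
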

	Let us briefly explain the ingredients of the jump-type SDE \eqref{Equation}. Comparing with Example \ref{example} the so-called "bang-bang" drift and the Brownian part might be not surprising since in the intervals $[H_n,H_{n+1})$, the restrictions of $Z$ (resp. -$Z$) have to be positive self-similar Markov processes. The jumps of  the Poissonian integral are such that
	\begin{align}\label{form}
		Z_{s-}\mapsto Z_{s-}+Z_{s-}(u-1)=Z_{s-} u,
	\end{align}
	and $u$ is chosen according to the measure $\bar\Pi$ which looks a bit complicated. We chose this formulation since it allows us to explain the three occurring jump possibilities for self-similar Markov processes with only one stochastic integral:
	\begin{itemize}
		\item	If $u>0$, then $Z$ does not change sign and consequently these are the jumps corresponding to a piecewise Lamperti transformation in $[H_n,H_{n+1})$. If the L\'evy measure $\Pi$ is infinite, then also $\bar\Pi$ is infinite with a possible pole only at $+1$ so that small jumps (i.e. $\Delta Z_s\approx 0$) accumulate.
		\item If $u=0$, then $Z$ jumps to zero which is equivalent to a jump to $-\infty$ (killing) for the L\'evy process in Lamperti's representation \eqref{LT}.
		\item If $u<0$, then $Z$ changes sign and the jump-times are precisely the $H_n$ from \eqref{ChangeTime}. The finiteness of the intensity measure $V(du)$ is equivalent to the non-accumulation of $H_n$ away from $T_0$.
	\end{itemize}
	The $dr$-integral is included to dynamically accelerated the jump rate by $1/|Z_{s-}|$. Hence, on the zero set of solutions all jumps come with infinite rate and the jumps not changing sign even with "double-infinite" rate if $\Pi$ is infinite. Such explosions of the jump rate  are the main difficulty of the SDE \eqref{Equation} when studied for all $t\geq 0$ or issued from $z=0$.

	\smallskip
	
		\begin{definition}\label{defi}
		\textbf{(a)} For a symmetric $\R_*$-valued self-similar Markov process the quintuple $(a,\sigma^2,\Pi,q,V)$ appearing in Proposition \ref{PropositionMain} (or appearing equivalently in the Lamperti-Kiu representation of \cite{CPR}) is called the corresponding Lamperti-Kiu quintuple.\\
		\textbf{(b)} A quintuple  $(a,\sigma^2,\Pi,q,V)$ is called the Lamperti-Kiu quintuple of a symmetric real-valued self-similar Markov process if it is the Lamperti-Kiu quintuple for the $\R_*$-valued self-similar Markov process obtained by absorption at zero.		
	\end{definition}
	

\subsection{Main Result}
	The striking feature of the SDE \eqref{Equation} compared to the time-change Lamperti-Kiu representation is that the form of possible extensions after hitting zero can be guessed immediately. If possible, they should be solutions to the same SDE for all $t\geq 0$. Here is the main result of this article:

\begin{theorem}\label{TheoremMain}
	\textbf{(I)} There is a bijection between symmetric real-valued self-similar Markov processes that leave zero continuously and satisfy Assumption \textbf{(A)} and quintuples $(a,\sigma^2,\Pi,q,V)$ consisting of
	\begin{itemize}
		\item a triplet $(a,\sigma^2,\Pi)$ of a spectrally negative L\'evy process killed at rate $q$ with Laplace exponent $\Psi$,
		\item  a finite measure $V(du)$ on $[-1,0)$
	\end{itemize}
	that satisfy 
	\begin{align}\label{Condition}
		\Psi(1)+\int_{-1}^0 (|u|-1) V(du)>0.
	\end{align}
	\smallskip
	
	\textbf{(II)} For a quintuple $(a,\sigma^2,\Pi,q,V)$ as in {(I)} a symmetric real-valued self-similar Markov processes that leaves zero continuously and is issued from $z\in\R$ can be constructed as weak solution to the SDE \eqref{Equation} for $t\geq 0$.

	\end{theorem}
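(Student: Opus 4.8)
The plan is to prove Theorem~\ref{TheoremMain} in three stages: (1) show that if a symmetric real-valued self-similar Markov process leaving zero continuously exists, its Lamperti-Kiu quintuple must satisfy \eqref{Condition}; (2) for a quintuple satisfying \eqref{Condition}, construct a weak solution to \eqref{Equation} for all $t\geq 0$ that does not spend time at zero; (3) verify that the solution is self-similar, Markov, leaves zero continuously, and that the correspondence is a bijection.

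\textbf{Necessity of \eqref{Condition}.} Starting from an $\R_*$-valued self-similar Markov process with quintuple $(a,\sigma^2,\Pi,q,V)$ that admits a continuous self-similar extension, I would work under $P^0$ (or with the laws $P^z$ as $z\downarrow 0$) and track the dynamics of $|Z|$. Away from zero, $|Z_t|$ is a non-negative self-similar Markov process whose drift coefficient, read off from \eqref{Equation} and \eqref{form} after applying the sign change, is $\Psi(1)+\int_{-1}^0(|u|-1)V(du)$; the coefficient $u-1$ in the driving noise contributes a jump of $|Z_{s-}|(|u|-1)$ to the absolute value when $u<0$. For the process to actually leave zero (rather than being absorbed or spending positive time there) this effective drift pushing $|Z|$ away from $0$ must be strictly positive — this is exactly the Cramér-type / positive-mean-drift phenomenon already visible for non-negative self-similar Markov processes. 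I expect to make this rigorous either via an excursion-theoretic argument (the entrance law from zero exists iff the underlying ``radial'' Lévy-type input has the right sign) or by a comparison/It\^o argument showing $\int_0^\cdot \1_{\{Z_s=0\}}\,ds>0$ when the inequality fails, contradicting the extension's regularity.

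\textbf{Construction for all $t\geq 0$.} This is the analytic heart and the main obstacle, because of the ``double-infinite'' jump rate $1/|Z_{s-}|$ on the zero set flagged after \eqref{form}. I would first solve \eqref{Equation} up to $T_0$ using Proposition~\ref{PropositionMain}, then restart: the idea is a pseudo-excursion construction, stitching together excursions of $|Z|$ away from zero (each governed by the positive self-similar Markov process with drift \eqref{Condition}, whose entrance law from $0$ exists precisely by \eqref{Condition} via the Rivero--Fitzsimmons / Bertoin--Caballero theory) and independently assigning a sign to each excursion by a symmetric coin flip, with the signs of the piecewise pieces inside an excursion dictated by the $u<0$ jumps. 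The delicate point is to show the resulting process is a genuine weak solution of the SDE on $[0,\infty)$: one must verify that the Poissonian integral against $(\mathcal N-\mathcal N')$ converges despite the blow-up of the $dr$-range near the zero set, which requires the singular stochastic calculus estimate that solutions spend zero time at zero, $\int_0^\infty \1_{\{Z_s=0\}}\,ds=0$ a.s. I would obtain this via an occupation-time / local-time bound: an It\^o--Tanaka computation for $|Z|$ together with the strict positivity \eqref{Condition} shows the ``bang-bang'' drift instantaneously expels the process from $0$, so the Lebesgue measure of the zero set vanishes and all the infinite-rate jump terms are well-defined (only finitely many sign changes and only an $L^2$-summable family of small non-sign-changing jumps occur on any bounded time interval, by finiteness of $V$ and the $\bar\Pi$-integrability near $+1$ after the time change).

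\textbf{Self-similarity, Markov property, and bijection.} Scaling invariance of \eqref{Equation} is checked directly: replacing $Z$ by $c^{-1}Z_{c\cdot}$ and using the scaling of Brownian motion and of the Poisson random measure (the $1/|Z_{s-}|$ factor is exactly what makes the jump intensity scale-covariant) maps a solution started at $z$ to one started at $c^{-1}z$, giving \eqref{self_sim}. The Markov and strong Markov properties follow from weak existence plus pathwise-type uniqueness arguments for \eqref{Equation} (uniqueness away from zero is classical for such jump SDEs; at zero the zero-time-at-zero property removes the ambiguity caused by the discontinuous $\sign$), which also yield that the construction is independent of the stochastic basis. Finally, injectivity and surjectivity of the quintuple correspondence: injectivity because the quintuple is recovered from the generator of $Z^\dag$ via Proposition~\ref{PropositionMain}(I), and surjectivity because part~(II) produces, for every admissible quintuple, an extension whose absorbed process has that quintuple, while uniqueness of the continuous self-similar extension (again an excursion-theory / entrance-law uniqueness statement, analogous to the non-negative case) shows no two distinct real-valued processes share a quintuple. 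The step I expect to fight hardest for is the zero-time-at-zero estimate and the attendant verification that \eqref{Equation} holds across excursion endpoints; everything else is adaptation of known self-similar Markov process technology.
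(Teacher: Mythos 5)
Your skeleton matches the paper's at the top level (necessity by passing to $|Z|$ and invoking the Cram\'er condition for positive self-similar Markov processes; sufficiency by a pseudo-excursion construction; the zero-time-at-zero estimate as the crux; scaling of the SDE for self-similarity), but two of your central steps have genuine gaps. First, the construction. You propose to build $|Z|$ from the entrance law of the positive self-similar Markov process and then assign signs, with a symmetric coin flip per excursion and the interior sign changes dictated by the $u<0$ jumps. When $V\neq 0$ this is not well defined: the sign-change rate is $V([-1,0))/|Z_{s-}|$, which is non-integrable at the left endpoint of every excursion (where $|Z_s|$ grows linearly by \eqref{Condition}), so $Z$ changes sign infinitely often immediately after leaving zero and there is no ``first sign'' to flip a coin for; the paper states this accumulation explicitly and for that reason abandons the direct reflection idea. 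Instead it constructs strong solutions $Z^m$ of the modified SDE \eqref{EquationApprox} --- $\sign$ redefined to vanish at zero, jump rates truncated at $m$, and a symmetric restart mechanism at $\pm\frac{1}{m}$ carrying the coefficient $\Psi(1)+\int_{-1}^0(|u|-1)V(du)$ --- proves tightness, and identifies limit points through martingale problems. That coefficient is precisely what makes the limiting $|Z|$ satisfy an SDE with \emph{constant} drift (Lemma \ref{Lemma3}), and zero time at zero is then extracted by the double It\^o argument of Corollary \ref{Cor1}, comparing that constant drift with the drift $\big(\cdots\big)\int_0^t\1_{\{X_s>0\}}\,ds$ obtained from $\sqrt{X_t^2}=X_t$. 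Your ``It\^o--Tanaka plus strict positivity'' sketch has no substitute for the constant-drift input, which does not come for free from the excursion picture.

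Second, uniqueness. You claim a pathwise-type uniqueness at zero because ``the zero-time-at-zero property removes the ambiguity caused by the discontinuous $\sign$''. This is false: the paper's example \eqref{bbbd} exhibits three distinct solutions $\sign(W)W^2$, $W^2$, $-W^2$, all of which spend zero time at zero. Uniqueness can only hold among \emph{symmetric} solutions, and even then the paper obtains only uniqueness of one-dimensional marginals, by the chain: symmetry reduces the law of $Z_t$ to that of $|Z_t|$; $|Z_t|$ solves \eqref{absv}; and the moment problem for that equation is well posed (Step 1d of the proof of Proposition \ref{Markov}, which is where Assumption \textbf{(A)} is needed so that the jumps of the absolute value are negative). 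The Markov property, self-similarity and the injectivity half of the bijection are all derived from this restricted uniqueness statement, not from any pathwise assertion at zero, so your argument for the ``bijection'' step also needs the symmetry restriction built in. Your necessity argument and the scaling computation are essentially the paper's Lemmas \ref{L0}, \ref{L2} and Proposition \ref{Pro2}.
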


The necessity of Condition \eqref{Condition} can be found easily by a reduction to Condition \eqref{cramer} for positive self-similar Markov processes.
The difficult part of the proof is a construction of a real-valued self-similar Markov process with Lamperti-Kiu quintuple $(a,\sigma^2,\Pi,q,V)$ whenever Condition \eqref{Condition} is valid. Our reformulation of the Lamperti-Kiu representation given in Proposition \ref{PropositionMain} turns out to be useful since it gives flexibility for the construction via approximation procedures and semimartingale calculus. The approximation is rather non-standard (and might remind the reader to constructions of skew Brownian motion) since the non-continuity of the "bang-bang" drift causes problems in weak convergence arguments. Limiting points of the approximating sequences might become trivial (trapped at zero) and it is precisely Condition \eqref{Condition} that ensures this is not the case.
\begin{rem}
It is surprising that Condition \eqref{Condition} is sufficient for the existence of solutions to the SDE \eqref{Equation} that leave zero. Since $\Psi$ does not depend on $V$, the quintuple $(a,\sigma^2,\Pi,q,V)$ can be chosen such that
	\begin{align*}
		\bigg(\Psi(1)+\int_{-1}^0 (u-1) V(du)\bigg)<0<\bigg(\Psi(1)+\int_{-1}^0 (|u|-1) V(du)\bigg).
	\end{align*}
	Then the SDE \eqref{Equation} has martingale terms that vanish at zero and a drift that points towards the origin. In such a situation it is impossible to find non-negative solutions to SDEs since positive martingales are absorbed at zero. Real-valued 		solutions, however, can exists.	Precisely the jumps crossing the origin cause this effect; if $V=0$ solutions can leave if and only if the drift points away from the origin.

	\end{rem}
It is important to note that the SDE \eqref{Equation} behaves very differently at zero and away from zero. Away from zero the coefficients are locally Lipschitz continuous so that pathwise uniqueness holds and strong solutions exist. Only when solutions touch zero the drift and the jumps are problematic. Consequently, the main task of the proofs is to give a construction and uniqueness statement for solutions issued from zero.

\subsection{Connection to Other Self-Similar SDEs}
Theorem \ref{TheoremMain} extends the results of \cite{DB} obtained for positive self-similar Markov processes with an assumption similar to Assumption \textbf{(A)}. The techniques utilized here need to be different from those of \cite{DB} since the drift coefficient is discontinuous so that standard arguments for SDEs do not apply. In particular, solutions to \eqref{Equation} are constructed even if the drift points towards zero which forces us to leave classical arguments in the spirit of Yamada and Watanabe and combine more specific stochastic calculus arguments with general martingale problem techniques.
The main result of \cite{DB} was stronger in the sense that pathwise uniqueness could be proved for their SDEs and solutions are automatically strong. Consequently, the constructed non-negative self-similar Markov processes are deterministic functionals of a Brownian motion and a Poisson point process so that we can speak of a strong classification.

\begin{rem}
	Possible uniqueness statements for the SDE \eqref{Equation} issued from $z=0$ need to be in a restricted sense as one can see at the simplest special case
	\begin{align}\label{bbbd}
		dZ_t= \sign(Z_t)\,dt+2 \sqrt{|Z_t|}\,dB_t,\quad Z_0=0.
	\end{align}
	Given a Brownian motion $W$, then three weak solution to the SDE \eqref{bbbd} can be defined explicitly:
	\begin{align*}
		Z^{(1)}_t=\sign(W_t) W_t^2,\quad  Z^{(2)}_t=W_t^2,\quad Z^{(3)}_t=-W_t^2.
	\end{align*}
	Of course, already the one-dimensional marginal distributions differ for the $Z^{(i)}$. Nonetheless, restricted to symmetric solutions (this rules out $Z^{(2)}$ and $Z^{(3)}$) one can easily deduce the uniqueness for the one-dimensional marginals. Tanaka's formula applied to \eqref{bbbd} shows that the absolute value of any solution satisfies
	\begin{align}\label{ddddd}
		X_t=t+2\int_0^t \sqrt{X_s}\,dB_s.
	\end{align}	
	Now uniqueness for \eqref{ddddd} implies uniqueness for the absolute value of solutions to \eqref{bbbd}, hence, uniqueness for one-dimensional marginals of symmetric solutions. \\
	The simple example shows that the best possible uniqueness statement for solutions to the SDE \eqref{Equation} is pathwise uniqueness among symmetric solutions.
\end{rem}

 There might be more sophisticated arguments that yield pathwise uniqueness among symmetric solutions, such as the arguments developed in Bass et al. \cite{BBC} for the self-similar SDE
\begin{align}\label{bass}
	dZ_t=|Z_t|^\beta\,dB_t, \quad t\geq 0,
\end{align}
for $\beta<1/2$.
They work under the restriction to solutions that do not spend time at zero, a property which is also crucial in all our arguments. Note that the index of self-similarity of \eqref{bass} is $a=\frac{1}{2-2\beta}<1$ and the H\"older continuity of the coefficient becomes worse when the self-similarity index decreases. For the classification problem of real-valued self-similar Markov processes the assumption $a=1$ could be imposed without loss of generality but it would be interesting to see if pathwise uniqueness among symmetric solutions holds for the generalized version of the SDE \eqref{Equation} that should describe all symmetric real-valued self-similar Markov processes of index $a$:
\begin{align*}
	Z_t&=z+\sigma \int_0^t |Z_s|^{1-\frac{1}{2a}}\,dB_s+ \bigg(\Psi(a)+\int_{-\infty}^0 (u-1) V(du)\bigg) \int_0^t \sign(Z_s) Z_s^{1-\frac{1}{a}}\,ds\\
	&\quad+\int_0^t \int_0^{\frac{1}{|Z_{s-}|^a}}\int_\R Z_{s-}(u-1)(\mathcal{N-N'})(ds,dr,du),
\end{align*}
with the same definitions as in Proposition \ref{PropositionMain}.
This generalization of \eqref{Equation} can be derived from the Lamperti-Kiu representation as we do in Section \ref{S1} for the special case $a=1$.
For the drift and diffusive coefficients the self-similarity index $a=1$ separates between a regime of H\"older continuity ($a>1$) and a regime with singular drift ($a<1$). Moreover, for all $a>0$ we find lack of monotonicity in the Poissonian integral and it seems that this terms forces the biggest troubles.

\subsection*{Organization of the Article}	 
	In Section \ref{S1} we prove the jump-type SDE reformulation of the Lamperti-Kiu representation for real-valued self-similar Markov processes. The proofs are given for the more general setup without symmetry and without the Assumption \textbf{(A)}. The construction of solutions to \eqref{Equation} that leave zero is presented in Section \ref{SectionConstruction}. Self-similarity and the strong Markov property are deduced from moment equations which imply uniqueness of one-dimensional marginals for solutions to the SDE \eqref{Equation}. Finally, the link to the classification problem of self-similar Markov processes is given in Section \ref{ProofEnde}
	
\section{Proofs}

\subsection{Lamperti-Kiu Representation via Jump-Type SDEs}\label{S1}
	We now state and prove a jump-type SDE formulation of the Lamperti-Kiu representation in the general case.
	\begin{proposition}\label{PropositionMain2}
	\textbf{(I)} There is a bijection between $\R_*$-valued self-similar Markov processes and two quintuples $(a_+,{\sigma^2_+},\Pi_+,q_+,V_+)$ and $(a_-,{\sigma^2_-},\Pi_-,q_-,V_-)$ consisting of
	\begin{itemize}
		\item two triplets $(a_{\pm},{\sigma^2_{\pm}},\Pi_{\pm})$ of L\'evy processes  killed at rates $q_{\pm}$ with Laplace exponents $\Psi_{\pm}$,
		\item two finite measures $V_{\pm}(du)$ on $(-\infty,0)$.
	\end{itemize}
	\smallskip
	\textbf{(II)}  Given two quintuples $(a_{\pm},{\sigma^2_{\pm}},\Pi_{\pm},q_\pm,V_\pm)$, a real-valued self-similar Markov processes issued from $z\in\R_*$ can be constructed as strong solution to
	\begin{align*}
	\begin{split}
		Z_t&=z+\Bigg[\bigg(a_++\frac{\sigma_+^2}{2}+\int_{|u|\leq 1}\big(e^{u}-1-u)\,\Pi_+(du)\bigg)\int_0^t \1_{\{Z_{s}>0\}}\,ds+\sigma_+\int_0^t \sqrt{|Z_s|} \1_{\{Z_{s}>0\}}\,dB_+(s)\\
		&\quad+\int_0^t\int_0^{\frac{1}{|Z_{s-}|}}  \int_{\R\backslash (1/e,e)}\1_{\{Z_{s-}>0\}}Z_{s-}\big({u}-1\big)\,\mathcal N_+(ds,dr,du)\\
		&\quad+\int_0^t\int_0^{\frac{1}{|Z_{s-}|}}  \int_{1/e}^e\1_{\{Z_{s-}>0\}}Z_{s-}\big({u}-1\big)\,(\mathcal N_+-{\mathcal N_+}')(ds,dr,du)\Bigg]\\
		&\quad+\Bigg[\bigg(a_-+\frac{\sigma_-^2}{2}+\int_{|u|\leq 1}\big(e^{u}-1-u)\,\Pi_-(du)\bigg)\int_0^t \1_{\{Z_{s}<0\}}\,ds+\sigma_-\int_0^t \sqrt{|Z_s|} \1_{\{Z_{s}<0\}}\,dB_-(s)\\
		& \quad+\int_0^t\int_0^{\frac{1}{|Z_{s-}|}}  \int_{\R\backslash (1/e,e)}\1_{\{Z_{s-}<0\}}Z_{s-}\big({u}-1\big)\,\mathcal N_-(ds,dr,du)\\
		&\quad+\int_0^t\int_0^{\frac{1}{|Z_{s-}|}}  \int_{1/e}^e\1_{\{Z_{s-}<0\}}Z_{s-}\big({u}-1\big)\,(\mathcal N_--{\mathcal N_-}')(ds,dr,du)\Bigg],\quad t\leq T_0.
	\end{split}
		\end{align*}
		Here, $B_{\pm}$ are standard Brownian motions and $\mathcal N_{\pm}$ are independent Poisson point processes on $(0,\infty)\times (0,\infty)\times (-\infty,\infty)$ with intensity measure $\mathcal N_{\pm}'(ds,dr,du)=ds\otimes dr\otimes \bar\Pi_{\pm}(du)$ according to the piecewise 			definition
		\begin{itemize}
			\item ${\bar\Pi^\pm}_{\big|(0,\infty)}(du)$ are the image measures under $\R\ni u\mapsto e^u$ of $\Pi_\pm$,
			\item $\bar\Pi^\pm(\{0\})=q_\pm$,
			\item $\bar\Pi^\pm_{\big|(-\infty,0)}(du)=V_\pm(du)$.
		\end{itemize}
\end{proposition}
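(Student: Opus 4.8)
The plan is to establish the bijection by going back and forth between the Lamperti-Kiu representation of \cite{CPR} and the jump-type SDE, working on the random time intervals $[H_n, H_{n+1})$ delimited by the change-of-sign times \eqref{ChangeTime}. The starting point is that on each such interval an $\R_*$-valued self-similar Markov process $Z$ restricted to, say, $\{Z>0\}$ is (after absorption at the endpoint) a positive self-similar Markov process, hence by Lamperti's transformation \eqref{LT} it equals $|Z_{H_n}|\exp(\xi^{+,n}_{\tau(\cdot)})$ for a L\'evy process $\xi^{+,n}$ with triplet $(a_+,\sigma_+^2,\Pi_+)$ killed at rate $q_+$; symmetrically on $\{Z<0\}$. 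First I would write Itô's formula for $x\mapsto x e^y$ (equivalently, differentiate Lamperti's representation) to see that, before the time change, $Z$ on $[H_n,H_{n+1})$ solves
\begin{align*}
	dZ_t = \Big(a_+ + \tfrac{\sigma_+^2}{2} + \int_{|u|\le 1}(e^u-1-u)\,\Pi_+(du)\Big)\,dt + \sigma_+\,dB_+(t) + \int_{\R}(e^u-1)\,\tilde{\mathcal N}^{(n)}(dt,du),
\end{align*}
in exponential coordinates; applying the Lamperti time change $\tau$, which runs the clock at rate $1/|Z_{s-}|$, turns $dt$ into $|Z_s|^{-1}\,ds$-type terms and, after multiplying through by the current position, produces exactly the drift $|Z_s|\cdot(\cdots)$, the diffusion $\sigma_+\sqrt{|Z_s|}$, and the jump kernel $Z_{s-}(u-1)$ with $u=e^v$ ranging over the push-forward $\bar\Pi^+$ of $\Pi_+$. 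This is the content of the three "jump possibilities" discussed after Proposition \ref{PropositionMain}: $u>0$ are the within-interval Lamperti jumps, $u=0$ (mass $q_+$) is the killing of $\xi^{+,n}$, i.e. a jump of $Z$ to zero, and $u<0$ (mass $V_+([-1,0))$, more generally $V_+((-\infty,0))$) are the sign changes at the times $H_{n+1}$; the $dr$-integral up to $1/|Z_{s-}|$ implements the acceleration.

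Next I would glue these pieces. By the strong Markov property of $Z$ at the stopping times $H_n$, the post-$H_n$ increments, the driving L\'evy processes $\xi^{\pm,n}$, and the crossing jumps $\Delta Z_{H_n}$ are conditionally independent of the past given $Z_{H_n}$; this lets one assemble a single Brownian motion $B_\pm$ and a single Poisson random measure $\mathcal N_\pm$ on $(0,\infty)^2\times\R$ (one pair per sign) out of the i.i.d.\ sequences, by standard enlargement-of-probability-space arguments, so that the glued process solves the displayed SDE globally on $[0,T_0)$. Conversely, given two quintuples and the data $(B_\pm,\mathcal N_\pm)$, I would construct the strong solution inductively over the intervals $[H_n,H_{n+1})$: on $\{Z>0\}$ away from zero the coefficients are locally Lipschitz, so a unique strong solution exists up to the first time it jumps across the origin (a $u<0$ atom of $\mathcal N_+$, which occurs at a well-defined rate since $V_+$ is finite) or hits zero (a $u=0$ atom, rate $q_+/|Z_{s-}|$); at that time $Z$ either enters $\{Z<0\}$ with the prescribed initial jump and one continues with the "$-$" data, or reaches zero. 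The finiteness of $V_\pm$ guarantees the $H_n$ do not accumulate except possibly at $T_0$, exactly as in the Lamperti-Kiu construction, so the procedure is well-defined up to $T_0$. Self-similarity of the constructed $Z$ follows from the scaling invariance of the SDE: replacing $Z$ by $c^{-1}Z_{c\,\cdot}$ and using Brownian and Poisson scaling ($B_\pm(c\,\cdot)\overset{d}{=}\sqrt c\,B_\pm$, and the rescaled $\mathcal N_\pm$ has the same intensity because the $1/|Z_{s-}|$ factor is homogeneous of degree $-1$) leaves the equation invariant, whence $c^{-1}Z_{c\cdot}$ under $P^z$ has law $P^{c^{-1}z}$.

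The main obstacle is the bookkeeping at the accumulation point $T_0$ and, more subtly, making the bijection genuinely a bijection rather than merely a surjection: I need that the map quintuples $\mapsto$ law of $Z$ is injective, which amounts to recovering $(a_\pm,\sigma_\pm^2,\Pi_\pm,q_\pm,V_\pm)$ from the law of the solution. For this I would read off $\sigma_\pm^2$ and $\Pi_\pm$ (equivalently $\Psi_\pm$) from the Lamperti-transformed positive self-similar pieces on the excursions of a given sign — these are uniquely determined by Lamperti's theorem — read off $q_\pm$ as the rate of jumps-to-zero within a sign excursion (normalized by the time change), and read off $V_\pm$ as the intensity of the sign-changing jumps; uniqueness of each piece then follows from the established classifications (I)–(III) recalled in the introduction together with the strong Markov property. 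A secondary technical point is that the stochastic integrals against $\mathcal N_\pm$ need the correct compensation near $u=1$ (the pole of $\bar\Pi^\pm$ when $\Pi_\pm$ is infinite), which is why the SDE splits the $u$-integral at $(1/e,e)$ into a compensated part and an absolutely convergent part; one has to check this splitting is consistent with the $|Z_{s-}|(e^v-1-v)$ correction coming out of Itô's formula in exponential coordinates, which is a routine but slightly delicate computation.
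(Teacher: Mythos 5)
Your strategy is sound and, for part \textbf{(II)}, follows essentially the same route as the paper: Lamperti/Lamperti--Kiu representation, It\^o's formula in exponential coordinates (which is exactly where the drift $a_\pm+\tfrac{\sigma_\pm^2}{2}+\int_{|u|\le1}(e^u-1-u)\Pi_\pm(du)$ and the splitting of the $u$-integral at $(1/e,e)$ come from), and then the time change. The differences are these. First, for part \textbf{(I)} the paper does not re-derive the bijection at all: it invokes Theorem 6 of \cite{CPR}, which already packages the interval-by-interval Lamperti pieces, the sign-change jumps and their independence into a single complex-valued L\'evy-type process $\mathcal E$ with $Z_t=z\exp(\mathcal E_{\tau(t|z|^{-1})})$. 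Your plan to re-prove injectivity by reading off $(\sigma_\pm^2,\Pi_\pm,q_\pm,V_\pm)$ from the excursion pieces is plausible but is extra work the paper avoids; working with $\mathcal E$ globally also spares the paper your interval-by-interval gluing and the bookkeeping at the accumulation of the $H_n$ near $T_0$. Second, and more importantly, the step you compress into ``standard enlargement-of-probability-space arguments'' is where the paper spends most of its effort, and it is not quite standard: to turn the time-changed jump configuration into a genuine Poisson random measure on $(0,\infty)\times(0,\infty)\times\R$ with intensity $ds\otimes dr\otimes\bar\Pi_\pm(du)$ one must (i) assign to each observed jump at time $t_n$ an artificial second coordinate $R_n/|Z_{t_n-}|$ with $R_n$ independent uniform on $(0,1)$, (ii) superpose independent Poisson noise on the unreached region $\{r|Z_{s-}|>1\}\cup\{Z_{s-}=0\}$, and (iii) verify the compensator identity for predictable integrands via a change of variables and Theorem II.1.8 of Jacod--Shiryaev; similarly the new Brownian motions need an independent increment on $\{Z_s=0\}$ before L\'evy's characterization applies. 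If you carry out your plan you will have to supply this construction explicitly; as a pure existence-of-extension appeal it is too thin to justify that the glued process is a \emph{strong} solution with respect to noises of the stated law.
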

	Before proving the proposition, let us quickly consider part {(II)} for two special cases.
	\begin{example}\label{e1}
	With the choice $z>0$ and
	\begin{align}\label{LLL}
	\begin{split}
		(a_+,{\sigma^2_{+}},\Pi_+,q_+,V_+)&=(a,\sigma^2,\Pi,q,0)\\
		(a_{-},{\sigma^2_{-}},\Pi_-,q_-,V_-)&=(0,0,0,0,0)
	\end{split}
	\end{align}
	zero is not crossed and, dropping the subscripts, the SDE simplifies to 
		\begin{align*}
	\begin{split}
		Z_t&=z +\bigg(a+\frac{\sigma^2}{2}+\int_{|u|\leq 1}\big(e^{u}-1-u)\,\Pi(du)\bigg)t+\sigma\int_0^t \sqrt{Z_s}\,dB(s)\\
		&\quad+\int_0^t\int_0^{\frac{1}{Z_{s-}}}  \int_{\R_+\backslash (e^{-1},e)}Z_{s-}\big({u}-1\big)\,\mathcal N(ds,dr,du)\\
		&\quad+\int_0^t\int_0^{\frac{1}{Z_{s-}}}  \int_{e^{-1}}^eZ_{s-}\big({u}-1\big)\,(\mathcal N-{\mathcal N}')(ds,dr,du),\quad t\leq T_0.
	\end{split}
	\end{align*}
	Under the additional assumption $\int_{\R_+\backslash (e^{-1},e)} (u-1)\, \bar\Pi(du)=\int_{|u|>1} (e^u-1) \, \Pi(du)<\infty$ one can use the L\'evy-Khintchin representation to  simplify by adding and subtracting the finite compensator integral to get
		\begin{align}\label{SDELeifMatyas}
		\begin{split}
			Z_t&=z+\Psi(1)\,t+\sigma\int_0^t \sqrt{Z_s}\,dB_s\\
			 &\quad+\int_0^t\int_0^{\frac{1}{Z_{s-}}}  \int_0^\infty Z_{s-}\big({u}-1\big)\,(\mathcal N-{\mathcal N}')(ds,dr,du),\quad t\leq T_0.
			\end{split}
		\end{align}
	The SDE \eqref{SDELeifMatyas} was already derived in \cite{DB} as reformulation of Lamperti's transformation for positive self-similar Markov processes;  strong existence and pathwise uniqueness for $t\geq 0$ was proved in \cite{DB} and also by Li and Pu \cite{LP}.
	\end{example}
	The next example shows that Proposition \ref{PropositionMain} is a special case of Proposition \ref{PropositionMain2} noting that \eqref{asss} holds trivially if $\bar\Pi$ is concentrated on $[-1,1]$.
\begin{example}
	Let us assume the symmetry
\begin{align*}
	(a_+,{\sigma^2_{+}},\Pi_+,q_+,V_+)&=(a_{-},{\sigma^2_{-}},\Pi_-,q_-,V_-)
\end{align*}
	and 
	\begin{align}\label{asss}
		\int_{|u|>1} (e^u-1) \, \Pi_\pm(du)+\int_{-\infty}^0 u\,V_\pm(du)<\infty.
	\end{align}
	The noises $B_\pm$, $\mathcal N_\pm$ can be replaced in this special case by $B,\mathcal N$ due to the symmetry assumption and  the independence of increments. Adding and subtracting the compensator integral as in Example \ref{e1} yields the SDE \eqref{Equation}.
\end{example}	
	
\begin{proof}[Proof of Proposition \ref{PropositionMain2}]
	To safe notation, let us assume throughout the proof $z>0$; for $z<0$ the arguments follow the same lines interchanging odd and even.\\
	We start with a reminder of the main result of \cite{CPR}: the Lamperti-Kiu representation formulated as time-changed exponential of a complex-valued L\'evy process.	
	Let $\xi^\pm$ be real-valued L\'evy processes with triplets $(a_\pm,\sigma^2_\pm,\Pi_\pm)$ killed at rates $q_\pm$ (formalized here as jump to $-\infty$ without causing technical complication since the process will be absorbed at the first occurrence) as in the formulation of the proposition, $\zeta^\pm$ exponential random variables with parameters $p_\pm=V_\pm(\R)$. If we denote (without confusion) by $V_\pm$ equally negative random variables with probability distribution $V_\pm(du)/V_\pm(\R)$, then $U^\pm:=\log(|V_\pm|)$ are real-valued random-variables (for the trivial case $V_\pm(\R)=0$ we define $U^\pm=0$). Further, suppose that $\xi^\pm, \zeta^\pm, U^\pm$ are independent. We consider the sequence $\big((\xi^{k},\zeta^{k}, U^{k}), k\geq  0\big)$ given by
	\begin{align}\label{cd}
		(\xi^{k},\zeta^{k}, U^{k})=\begin{cases}
			(\xi^{+,k},\zeta^{+,k}, U^{+,k}): \quad k\text{ even (including }k=0),\\
			(\xi^{-,k},\zeta^{-,k}, U^{-,k}): \quad k\text{ odd},
		\end{cases}
	\end{align}
	where $(\xi^{\pm,k},\zeta^{\pm,k}, U^{\pm,k})\stackrel{\mathcal L}{=}(\xi^\pm, \zeta^\pm,U^\pm)$ are independent. Let $(T_k,k\geq 0)$ be the sequence defined by 
	\begin{align*}
		T_0=0,\quad T_n=\sum_{k=0}^{n-1} \zeta^{k}, \quad n \geq 1,
	\end{align*}
	and $(N_t,t\geq 0)$ the alternating renewal type process
	\begin{align*}
		N_t=\max \big\{n\geq 0: T_n\leq t\big\}.
	\end{align*}
	For simplicity, the abbreviation
	\begin{align*}
		\sigma_t&=t-T_{N_t},\qquad\quad \xi_{\sigma_t}=\xi^{N_t}_{\sigma_t},\qquad\quad \xi_{\zeta}^{k}=\xi_{\zeta^{k}}^{k},
	\end{align*}
	is used. With the notation, the Lamperti-Kiu representation becomes
	\begin{align}\label{Z}
		Z_t=z\exp\big(\mathcal E_{\tau(t|z|^{-1})}\big),\quad t\leq T_0,
	\end{align}
	where
	\begin{align*}
		\mathcal E_t=\xi_{\sigma_t}+\sum_{k=1}^{N_t-1}\Big(\xi_{\zeta}^{k}+U^{k}\Big)+i\pi N_t
	\end{align*}
	and 
	\begin{align*}
		\tau(t):=\inf\bigg\{s\geq 0:\int_0^s\big| \exp\left(\mathcal E_r\right) \big| dr>t\bigg\}.
	\end{align*}
	Theorem 6 of \cite{CPR} states that \eqref{Z} defines an $\R_*$-valued self-similar Markov process issued from $z$ and conversely every $\R_*$-valued self-similar Markov process can be represented via \eqref{Z} with two quintuples as in the statement of part {(I)} of the proposition. Recall that throughout this article we suppose the index of self-similarity is $1$.
	\smallskip
	
	Note that, if $U^\pm=0$ then $N_t=0, \sigma_t=t$ and \eqref{Z} simplifies to Lamperti's representation \eqref{LT}.	
	\smallskip	
	
	The rest of the proof is concerned with part {(II)}, the reformulation of \eqref{Z} via jump-type SDEs for which the L\'evy-It\=o representation is applied to the occurring L\'evy process $\xi^{\pm, k}$. Since the sequence of L\'evy processes is independent and runs on disjoint time-intervals the same driving noises can be used for all $k\geq 1$.	
	The occurring Brownian motions are denoted by $W^1_\pm$ and the Poisson point processes with intensities $ds\otimes \Pi_\pm(du)$ by $\mathcal N^1_\pm$. Recall that the killing is included by atoms at $-\infty$ with weights $q_\pm$.	
	 Additionally, since the jumps in imaginary direction and $U^{\pm,k}$ come at same times, they can be added according to Poisson point processes $\mathcal M^1_\pm$ on $(0,\infty)\times \R$ with intensity measures ${\mathcal M^1_\pm}'(ds,dv)=p_\pm ds\otimes U^\pm(du)$, where $U^\pm(du)$ denotes the probability law of $U^\pm$. Since $U^\pm(du)$ is a probability measure the jump-rate of $\mathcal M^i_\pm$ is $p_\pm$. The L\'evy-It\=o type representation for $\mathcal E$ can now be written as
	\begin{align*}
		 \mathcal E_t
		&=\Bigg[a_+\int_0^t \1_{\{Im(\mathcal E_s)\text{ even}\}}\,ds+\sigma_+\int_0^t \1_{\{Im(\mathcal E_s)\text{ even}\}}\,dW^1_+(s)\\
		&\quad+\int_0^t \int_{|u|\leq 1} \1_{\{Im(\mathcal E_{s-})\text{ even}\}}u\,(\mathcal N^1_+-{\mathcal N^1_+}')(ds,du)
				+\int_0^t \int_{|u|> 1} \1_{\{Im(\mathcal E_{s-})\text{ even}\}}u\,\mathcal N^1_+(ds,du)\\
						&\quad+ \int_0^t \int_\R \1_{\{Im(\mathcal E_{s-})\text{ even}\}}(u+i\pi )\,\mathcal M^1_+(ds,du) \Bigg]\\
		&\quad+\Bigg[ a_-\int_0^t \1_{\{Im(\mathcal E_s)\text{ odd}\}}\,ds+\sigma_-\int_0^t \1_{\{Im(\mathcal E_s)\text{ odd}\}}\,dW^1_-(s)\\
				&\quad+\int_0^t \int_{|u|\leq 1} \1_{\{Im(\mathcal E_{s-})\text{ odd}\}}u\,(\mathcal N^1_--{\mathcal N^1_-}')(ds,du)
								+\int_0^t \int_{|u|> 1} \1_{\{Im(\mathcal E_{s-})\text{ odd}\}}u\,\mathcal N^1_-(ds,du)\\
				&\quad+\int_0^t \int_\R \1_{\{Im(\mathcal E_{s-})\text{ odd}\}}(u+i\pi )\,\mathcal M^1_-(ds,du)\Bigg]
	\end{align*} 
	with the convention $0$ is even.
	If now we set $\eta_t=z\exp(\mathcal E_t)$, apply It\=o's lemma and use
	\begin{align*}	
		\1_{\{Im(\mathcal E_s)\text{ even}\}}=\1_{\{\eta_s>0\}},\quad\quad\quad	\1_{\{Im(\mathcal E_s)\text{ odd}\}}=\1_{\{\eta_s<0\}},
	\end{align*}		
	then we obtain
	\begin{align*}
		&\quad \eta_t\\
		&=z+\Bigg[a_+\int_0^t \eta_s\1_{\{\eta_s>0\}}\,ds+\frac{\sigma_+^2}{2}\int_0^t  \1_{\{\eta_s>0\}}\eta_s \,ds+\sigma_+\int_0^t \eta_s \1_{\{\eta_s>0\}}\,dW^1_+(s)\\
		&\quad+\int_0^t \int_{|u|\leq 1}\1_{\{\eta_{s-}>0\}}\eta_{s-}\big(e^{u}-1\big)\,(\mathcal N^1_+-{\mathcal N^1_+}')(ds,du)
				+\int_0^t \int_{|u|> 1}\1_{\{\eta_{s-}>0\}}\eta_{s-}\big(e^{u}-1\big)\,\mathcal N^1_+(ds,du)\\
						&\quad+\int_0^t \int_{|u|\leq 1}\1_{\{\eta_s>0\}}\eta_{s-}\big(e^{u}-1-u)\,ds\,\Pi_+(du)
			+ \int_0^t \int_\R \eta_{s-} \1_{\{\eta_{s-}>0\}}\big(-e^{u}-1\big)\,\mathcal M^1_+(ds,du) \Bigg]\\
		&\quad+\Bigg[ a_-\int_0^t \eta_s\1_{\{\eta_{s}<0\}}\,ds+\frac{\sigma_-^2}{2}\int_0^t  \1_{\{\eta_s<0\}}\eta_s \,ds+\sigma_-\int_0^t \eta_s\1_{\{\eta_{s}<0\}}\,dW^1_-(s)\\
		&\quad+\int_0^t \int_{|u|\leq 1}\1_{\{\eta_{s-}<0\}}\eta_{s-}\big(e^{u}-1\big)\,(\mathcal N^1_--{\mathcal N^1_-}')(ds,du)
				+\int_0^t \int_{|u|> 1}\1_{\{\eta_{s-}<0\}}\eta_{s-}\big(e^{u}-1\big)\,\mathcal N^1_-(ds,du)\\
										&\quad+\int_0^t \int_{|u|\leq 1}\1_{\{\eta_{s-}<0\}}\eta_{s-}\big(e^{u}-1-u\big)\,ds\,\Pi_-(du)
			+ \int_0^t\int_\R \1_{\{\eta_{s-}<0\}}\eta_{s-} \big(-e^{u}-1\big)\,\mathcal M^1_-(ds,du)\Bigg].
	\end{align*}
	To incorporate the time-change $\tau$ we follow closely the arguments of Proposition 3.13 of \cite{DB} for the special case \eqref{LLL}.	
	 Since the arguments are almost identical, we refer for the verification of intermediate steps to the careful treatment in \cite{DB}.
	 \smallskip	
	
	 Let us first denote by $(t_n,\Delta_n)_{n\in\N}$ an arbitrary labeling of the pairs associated to jump times and jump sizes of $\left(\mathcal E_{\tau(tz^{-1})} \right)_{t\in[0,T_0)}$ and more precisely by  $(t_n,\Delta^\pm_n)_{n\in\N}$ the  subset of jumps due to $\mathcal N^\pm$ and by  $(t_n,\bar\Delta^\pm_n)_{n\in\N}$ the subset of jumps due to $\mathcal M^\pm$. We can assume that we are given additionally independent Wiener processes $(\bar W_\pm(t))_{t\geq 0}$,
 Poisson random measures $\mathcal {P}^1_\pm$ on $(0,\infty)\times (0,\infty)\times\R$ with intensity
 measure $\dd s\otimes \dd r\otimes\Pi(\dd u)$  and Poisson point processes $\mathcal {P}^2_\pm$ on $(0,\infty)\times (0,\infty)\times\R$ with intensity
 measure $p_\pm ds\otimes dr\otimes  U^\pm(du)$ that generate a filtration $(\mathcal H_t)_{t\geq 0}$. 
Additionally, we choose an independent sequence of random variables $(R_n)_{n\in\N}$ uniformly distributed
 on $(0,1)$ such that $R_n$ is $\cH_{t_n}$-measurable and independent of $\cH_{t_n-}$ and define 
	 \begin{align*}
		   W^2_\pm(t) &= \int_0^t \sign(Z_s)\sqrt{|Z_s|}\, dW^1_\pm (\tau(s|z|^{-1})) + \int_0^t \mathbf 1_{\{Z_s= 0\}}\,\dd \bar W_\pm(s),\\
		      \mathcal N^2_\pm(A_1\times A_2\times A_3)      & =\sum_{n=1}^\infty{\mathbf 1_{\{A_1\times A_2\times A_3\}}((t_n,R_n/|Z_{t_n-}|,\Delta^\pm_n))}  \\
         &\quad+\int_{A_1}\int_{A_2}\int_{A_3}(\mathbf 1_{\{r|Z_{s-}|>1 \}} + \mathbf 1_{\{Z_{s-}=0\}} )            \,\mathcal P^1_\pm(\dd s,\dd r,\dd u),\\
         		      \mathcal M^2_\pm(A_1\times A_2\times A_3)      & =\sum_{n=1}^\infty{\mathbf 1_{\{A_1\times A_2\times A_3\}}((t_n,R_n/|Z_{t_n-}|,\bar\Delta^\pm_n))}  \\
         &\quad+\int_{A_1}\int_{A_2}\int_{A_3}(\mathbf 1_{\{r|Z_{s-}|>1 \}} + \mathbf 1_{\{Z_{s-}=0\}} )            \,\mathcal P^2_\pm(\dd s,\dd r,\dd u),
	 \end{align*}
	  for all $A_1,A_2\in\cB((0,\infty))$ and $A_3\in\cB(\R)$. It now follows from L\'evy's characterization that the $W^2_\pm$ are $\mathcal H_t$-Brownian motions:
	   \begin{align*}
 \big \langle W^2_\pm(\cdot)\big\rangle_t
    = \int_0^t |Z_s| \mathbf 1_{\{Z_s\ne 0\}} \,\dd \tau(s|z|^{-1})
       + \int_0^t \mathbf 1_{\{ Z_s=0\}}\,\dd s
    = \int_0^t \mathbf 1_{\{Z_s\ne 0\}}\,\dd s
      + \int_0^t \mathbf 1_{\{Z_s=0\}}\,\dd s
    = t.
 \end{align*}
Furthermore, to show that the $\mathcal N^2_\pm$ are $\mathcal H_t$-Poisson point processes with intensity measures $dt\otimes dr\otimes \Pi_\pm(du)$. Applying Theorems II.1.8 and II.4.8 of \cite{JS}, we need to verify

 \begin{align}\label{help1}
  \begin{split}
    &E\left(\int_0^\infty \int_0^\infty \int_\R H(s,r,u)\,\mathcal N^2_\pm(\dd s,\dd r,\dd u)\right)
= E\left(\int_0^\infty \int_0^\infty \int_\R H(s,r,u)\,\dd s\,\dd r\,\Pi_\pm(\dd u)\right)
  \end{split}
 \end{align}
 for every non-negative predictable function $H$ on $\Omega\times (0,\infty)\times(0,\infty)\times\R$.
By the definition of $\mathcal N^2_\pm$ we can write
 \begin{align}\label{help21}
  \begin{split}
   E&\left(\int_0^\infty \int_0^\infty \int_\R H(s,r,u)\,\mathcal N^2_\pm(\dd s,\dd r,\dd u)\right)\\
    &= E\left(\sum_{n=1}^\infty
         H(t_n,R_n/|Z_{t_n-}|,\Delta_n)\right)\\
    &\quad  +  E\left(\int_0^\infty \int_0^\infty \int_\R H(s,r,u)
                   (\mathbf 1_{\{r|Z_{s-}|>1\}} + \mathbf 1_{\{Z_{s-}=0\}} )
                     \,\cP^1_\pm(\dd s,\dd r,\dd u)\right).
  \end{split}
 \end{align}
To express the first summand we apply Theorem II.1.8 of Jacod and Shiryaev \cite{JS} to the
 non-negative predictable function $$\widetilde H(s,r,u):=H(s,r/|Z_{s-}|,u)
 ,\quad s>0,r>0,u\in\R,$$
 and the Poisson random measure on $(0,\infty)\times(0,\infty)\times\R$ defined by
 \[
   \widetilde\cP^1(A_1\times A_2\times A_3)
      := \sum_{n=1}^\infty {\mathbf 1_{A_1\times A_2\times A_3}((t_n,R_n,\Delta_n))},
      \qquad A_1,A_2\in\cB((0,\infty)),\;\; A_3\in\cB(\R),
 \]
 to obtain
 \begin{align*}
	 	 E\left(\sum_{n=1}^\infty H(t_n,R_n/|Z_{t_{n-}}|,\Delta_n) \right)
		 &=\E\left(\int_0^\infty \int_0^\infty \int_\R \widetilde H(s,r,u)\,
                   \widetilde\cP^1(\dd s,\dd r,\dd u)\right)\\
	     	 &= E\left(\int_0^{T_0} \int_0^1 \int_\R H(s,r/|Z_{s-}|,u)
                       \,\dd \tau({s|z|^{-1})} \,\dd r\,\Pi(\dd u) \right),
 \end{align*}
 where the second equality holds since, by construction, the compensator measure of $\widetilde\cP^1$
 is  $$\mathbf 1_{(0,T_0)}(s)\mathbf 1_{(0,1)}(r)\dd \tau(s|z|^{-1})\,\dd r\,\Pi(\dd u).$$
Utilizing a change of variable in the second coordinate of $H$,
 we can further simplify the right-hand side to
 \begin{align*}
&E\left(\int_0^{T_0} \int_0^1 \int_\R \frac{1}{|Z_{s-}|} H(s,r/ |Z_{s-}|,u) \, \dd s\,\dd r\,\Pi(\dd u) \right)\\
&\quad  = E\left(\int_0^{T_0} \int_0^{1/|Z_{s-}|}\int_\R  H(s,r,u) \,\dd s\,\dd r\,\Pi(\dd u) \right).
 \end{align*}
Similarly, applying Theorem II.1.8 of Jacod and Shiryaev \cite{JS} to $\cP^1_\pm$, the second summand of
 {the right hand side of} \eqref{help21} equals
  \begin{align*}
   	E&\left(\int_0^\infty \int_0^\infty \int_\R
        H(s,r,u)(\mathbf 1_{\{r|Z_{s-}|>1\}} + \mathbf 1_{\{Z_{s-}=0\}} )\,\cP^1_\pm(\dd s,\dd r,\dd u)\right) \\
     &= E\left(\int_0^{T_0} \int_{1/|Z_{s}|}^\infty\int_\R  H(s,r,u) \, \dd s \,\dd r\,\Pi(\dd u) \right)
         + E\left(\int_{T_0}^\infty \int_0^\infty\int_\R  H(s,r,u) \, \dd s \,\dd r\,\Pi(\dd u) \right).
   \end{align*}
Adding {the right hand sides of the above two equalities, by \eqref{help21}, we have} \eqref{help1}.
\smallskip

Similarly, one can show that the $\mathcal M^2_\pm$ are $\mathcal H_t$-Poisson point processes with intensity measures $p_\pm dt\otimes dr\otimes U^\pm(du)$.
	  \smallskip
	  
	Plugging-in the new Brownian motion we obtain
	\begin{align*}
		\sigma_\pm\int_0^{\tau(t|z|^{-1})}\1_{\{\eta_s>0\}}\eta_s \, dW^2_\pm(s)&=\sigma_\pm\int_0^t  \1_{\{Z_s>0\}}Z_s  \,dW^2_\pm(\tau(sz^{-1})) \\
		&=\sigma_\pm\int_0^t \1_{\{Z_s>0\}} \sqrt{|Z_s|}\sign(Z_s)\sqrt{|Z_s|}  \,dW^2_\pm({\tau({sz^{-1})}})\\
		&=\sigma_\pm\int_0^t \1_{\{Z_s>0\}}\sqrt{ |Z_s|}\,dW^1_\pm(s) ,  \qquad t\leq T_0,
	\end{align*}
	and analogously for the negative part. Comparing one-by-one the jumps of the Poisson point processes we also find, by construction of the new point measures,
	\begin{align*}		
		&\quad \int_0^{\tau(t|z|^{-1})}  \int_{|u|> 1}\1_{\{\eta_{s-}>0\}}\eta_{s-}\big(e^{u}-1\big)\,\mathcal N^1_+(ds,du)\\
		&=\int_0^t\int_0^{\frac{1}{|Z_{s-}|}}  \int_{|u|> 1}\1_{\{Z_{s-}>0\}}Z_{s-}\big(e^{u}-1\big)\,\mathcal N^2_+(ds,dr,du),\quad t\leq T_0,
	\end{align*}	
	and
	\begin{align*}
		&\quad \int_0^{\tau(t|z|^{-1})}  \int_{|u|\leq  1}\1_{\{\eta_{s-}>0\}}\eta_{s-}\big(e^{u}-1\big)\,(\mathcal N^1_+-{\mathcal N^1_+}')(ds,du)\\
		&=\int_0^t\int_0^{\frac{1}{|Z_{s-}|}}  \int_{|u|\leq 1}\1_{\{Z_{s-}>0\}}Z_{s-}\big(e^{u}-1\big)\,(\mathcal N^2_+-{\mathcal N^2_+}')(ds,dr,du),\quad t\leq T_0,
	\end{align*}
	and
	\begin{align*}
		 &\quad\int_0^{\tau(t|z|^{-1})}\int_\R \1_{\{\eta_{s-}>0\}}\eta_{s-} \big(-e^{u}-1\big)\,\mathcal M^1_+(ds,du)\\
		 &= \int_0^t\int_0^{\frac{1}{|Z_{s-}|}} \int_\R \1_{\{Z_{s-}>0\}}Z_{s-} \big(-e^{u}-1\big)\,\mathcal M^2_+(ds,dr,du),\quad t\leq T_0,
	\end{align*}
	and analogously for the negative parts. Finally, ordinary change of time yields
	\begin{align*}
		\frac{\sigma_\pm^2}{2}	\int_0^{\tau(t|z|^{-1})} \eta_s \1_{\{\eta_s>0\}} \,ds&=\frac{\sigma_\pm^2}{2}\int_0^t \1_{\{Z_s>0\}} \,ds,\quad t\leq T_0.
	\end{align*}

	Plugging-into the integral equation derived for $\eta$, we find that $Z$ satisfies 
	\begin{align*}
		Z_t&=z+\Bigg[\bigg(a_++\frac{\sigma_+^2}{2}+\int_{|u|\leq 1}\big(e^{u}-1-u)\,\Pi_+(du)\bigg)\int_0^t \1_{\{Z_{s}>0\}}\,ds+\sigma_+\int_0^t \sqrt{|Z_s|} \1_{\{Z_{s}>0\}}\,dW^2_+(s)\\
		&\quad+\int_0^t\int_0^{\frac{1}{|Z_{s-}|}}  \int_{|u|>1}\1_{\{Z_{s-}>0\}}Z_{s-}\big(e^{u}-1\big)\,\mathcal N^2_+(ds,dr,du)\\
		&\quad+\int_0^t\int_0^{\frac{1}{|Z_{s-}|}}  \int_{|u|\leq 1}\1_{\{Z_{s-}>0\}}Z_{s-}\big(e^{u}-1\big)\,(\mathcal N^2_+-{\mathcal N^2_+}')(ds,dr,du)\\
		&\quad + \int_0^t\int_0^{\frac{1}{|Z_{s-}|}} \int_\R \1_{\{Z_{s-}>0\}}Z_{s-} \big(-e^{v}-1\big)\,\mathcal M^2_+(ds,dr,dv) \Bigg]\\
		&\quad+\Bigg[\bigg(a_-+\frac{\sigma_-^2}{2}+\int_{|u|\leq 1}\big(e^{u}-1-u)\,\Pi_-(du)\bigg)\int_0^t \1_{\{Z_{s}<0\}}\,ds+\sigma_-\int_0^t \sqrt{|Z_s|} \1_{\{Z_{s}<0\}}\,dW^2_-(s)\\
		& \quad+\int_0^t\int_0^{\frac{1}{|Z_{s-}|}}  \int_{|u|>1}\1_{\{Z_{s-}<0\}}Z_{s-}\big(e^{u}-1\big)\,\mathcal N^2_-(ds,dr,du)\\
		&\quad+\int_0^t\int_0^{\frac{1}{|Z_{s-}|}}  \int_{|u|\leq 1}\1_{\{Z_{s-}<0\}}Z_{s-}\big(e^{u}-1\big)\,(\mathcal N^2_--{\mathcal N^2_-}')(ds,dr,du)\\
		&\quad + \int_0^t\int_0^{\frac{1}{|Z_{s-}|}} \int_\R \1_{\{Z_{s-}<0\}}Z_{s-} \big(-e^{v}-1\big)\,\mathcal M^2_-(ds,dr,dv) \Bigg],\quad t\leq T_0.
	\end{align*}	
	The final step is only for notational convenience: We change the coordinates for the jumps of $\mathcal N^2_\pm$, $\mathcal M^2_\pm$ in order to combine the integrals to integrals driven by $\mathcal N_\pm$ as in the statement of the Proposition.	
	\end{proof}
		

\subsection{Construction of Real-Valued Self-Similar Processes}\label{SectionConstruction}
	The aim of this section is to construct real-valued self-similar Markov processes that leave zero continuously with Lamperti-Kiu quintuple $(a,\sigma^2,\Pi,q,V)$ whenever Condition \eqref{Condition} is valid. We construct a symmetric approximating sequence for the martingale problem corresponding to the SDE \eqref{Equation} and use moment equations of Bertoin and Yor \cite{BY2} to show that limit points are Markovian and self-similar.
	\smallskip
	
	 Recall that for a generator $\mathcal A$ defined on a suitably chosen subset $\mathcal D(\mathcal A)$ of the bounded and measurable functions $B(\R)$ mapping $\R$ into $\R$ a stochastic process $Z$ is said to be a solution to the martingale problem $(\mathcal A, \nu)$ corresponding to $\mathcal A$ with initial distribution $\nu$ if for all $f \in \mathcal D(\mathcal A)$
	\begin{align*}
		M_t^f=f(Z_t)-\int_0^t \mathcal A f(Z_s)\,ds,\quad t\geq 0,
	\end{align*}
	is a martingale and $Z_0$ is distributed according to $\nu$. The next proposition is standard; it is included for completeness and since the used estimates will appear several times in the sequel.
	\begin{proposition}\label{Proposition}
		A stochastic process $Z$ is a weak solution to the SDE \eqref{Equation} issued from $z\in\R$ if and only if it satisfies the martingale problem  $(\mathcal A,\delta_z)$ corresponding to the generator 
 		\begin{align}\label{Generator}
			\begin{split}
	  			(\cA f)(z)
  				&:=\bigg(\Psi(1)+\int_{-1}^0 (u-1) V(du)\bigg)\sign(z)f'(z)+\frac{\sigma^2}{2}|z|f''(z)\\
      	   		         &\quad + \int_0^{\infty} \int_{{[-1,1]}}\mathbf 1_{\{r|z|\leq 1\}}\Big(f\big(u z\big)-f(z) -f'(z)z(u-1)\Big) \,\dd r\,\bar\Pi(\dd u),\quad z\in\R,
			\end{split}
 		\end{align}
		acting on the infinitely differentiable functions with compact support $C_c^\infty(\R)$.		
	\end{proposition}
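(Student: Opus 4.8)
The plan is to establish the two implications separately, in both cases via It\=o's formula for semimartingales with jumps, reusing the noise construction already carried out in the proof of Proposition~\ref{PropositionMain2}.

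For the ``only if'' direction, let $Z$ be a weak solution of \eqref{Equation} and fix $f\in C_c^\infty(\R)$. I would apply the general It\=o formula to $f(Z_t)$, splitting $dZ$ into its drift, its continuous martingale part, and its compensated jump part. The second--order term contributes $\frac{\sigma^2}{2}\int_0^t f''(Z_s)|Z_s|\,ds$, the continuous martingale contributes $\sigma\int_0^t f'(Z_s)\sqrt{|Z_s|}\,dB_s$, and the $f'(Z_{s-})$--weighted compensated jump integral together with the Taylor remainder $\sum_{s\le t}\big(f(Z_s)-f(Z_{s-})-f'(Z_{s-})\Delta Z_s\big)$ recombine — after adding and subtracting the compensator $\cN'$ — into a compensated integral of $f(Z_{s-}u)-f(Z_{s-})$ against $\cN-\cN'$ plus $\int_0^t\int_0^{1/|Z_{s-}|}\int_{-1}^1\big(f(Z_su)-f(Z_s)-f'(Z_s)Z_s(u-1)\big)\,ds\,dr\,\bar\Pi(du)$. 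Carrying out the $dr$--integration turns the last term into exactly the integral part of $\cA f(Z_s)$; on $\{Z_s=0\}$ the integrand vanishes identically, so the apparent explosion of the upper limit $1/|Z_{s-}|$ is harmless and matches the convention built into \eqref{Generator}. Thus $M^f_t=f(Z_t)-f(z)-\int_0^t\cA f(Z_s)\,ds$ is a sum of local martingales; since $f,f',f''$ are bounded with compact support, $\Pi$ is a L\'evy measure, $\bar\Pi(\{0\})=q$, and $V$ is finite, the elementary bound $|f(uz)-f(z)-f'(z)z(u-1)|\le\frac12\|f''\|_\infty z^2(u-1)^2$ near $u=1$ together with $\frac1{|z|}z^2(u-1)^2=|z|(u-1)^2$ and $\int_{(0,1]}(u-1)^2\,\bar\Pi(du)<\infty$ (plus a routine separate estimate using $|u|\le1$ when $|z|$ exceeds the support radius of $f$) upgrades these to genuine $L^2$--martingales, so $Z$ solves the martingale problem $(\cA,\delta_z)$.

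For the converse, given $Z$ solving $(\cA,\delta_z)$ I would first feed the martingale property the functions $f_n\in C_c^\infty(\R)$ with $f_n(x)=x$ on $[-n,n]$ and localize at the exit time from $[-n,n]$ to conclude that $Z$ is a semimartingale whose characteristics are dictated by $\cA$: drift $\big(\Psi(1)+\int_{-1}^0(u-1)V(du)\big)\int_0^t\sign(Z_s)\,ds$, a continuous martingale part $M^c$, and a jump part with predictable compensator $\1_{\{Z_{s-}\ne0\}}|Z_{s-}|^{-1}\int_{[-1,1]}\delta_{Z_{s-}(u-1)}(dx)\,\bar\Pi(du)\,ds$; taking $f_n(x)=x^2$ on $[-n,n]$ and localizing identifies $\langle M^c\rangle_t=\sigma^2\int_0^t|Z_s|\,ds$. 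Then I would invoke exactly the enlargement from the proof of Proposition~\ref{PropositionMain2}: adjoining an independent Brownian motion $\bar B$, set $dB_s=(\sigma\sqrt{|Z_s|})^{-1}\1_{\{Z_s\ne0\}}\,dM^c_s+\1_{\{Z_s=0\}}\,d\bar B_s$, so that L\'evy's characterization gives a Brownian motion $B$ with $M^c_t=\sigma\int_0^t\sqrt{|Z_s|}\,dB_s$ (the identity needs only $\1_{\{Z_s=0\}}|Z_s|=0$, no regularity of the zero set), and, adjoining an independent Poisson random measure on $\{r|Z_{s-}|>1\}\cup\{Z_{s-}=0\}$ together with i.i.d.\ uniform marks $R_n$ attached to the jump times $t_n$ of $Z$, define $\cN$ by placing an atom at $(t_n,R_n/|Z_{t_n-}|,Z_{t_n}/Z_{t_n-})$ on the complement. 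A compensator computation via Theorems~II.1.8 and~II.4.8 of \cite{JS}, identical in form to the one performed for $\cN^2_\pm$ above, shows $\cN$ has intensity $ds\otimes dr\otimes\bar\Pi(du)$, and substituting $B$ and $\cN$ into the semimartingale decomposition reproduces \eqref{Equation}. The only genuinely delicate point, and the one I expect to be the main obstacle, is the degeneracy at the origin: the jump rate $1/|z|$ blows up there and, when $\Pi$ is infinite, so does $\bar\Pi$ near $u=1$, so one must verify that the compensated integral against $\cN-\cN'$ in \eqref{Equation} converges and that the truncations above are legitimate; this rests again on $\int_{(0,1]}(u-1)^2\,\bar\Pi(du)<\infty$ and the finiteness of $V$ on $[-1,0)$, after which both directions reduce to routine semimartingale bookkeeping.
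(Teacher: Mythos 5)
Your proposal is correct and follows essentially the same route as the paper: It\=o's formula plus an $L^2$/quadratic-variation estimate based on $\int_{-1}^1(u-1)^2\,\bar\Pi(du)<\infty$ for the forward direction, and identification of the semimartingale characteristics from the generator followed by reconstruction of $B$ and $\mathcal N$ for the converse (the paper cites martingale representation theorems from Ikeda--Watanabe where you redo the explicit marking construction from Proposition \ref{PropositionMain2}, and it controls the martingale via a Gronwall bound on $\E\int_0^T|Z_s|\,ds$ where you exploit the compact support of $f$ directly -- both work).
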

	\begin{proof}
	 	Let us first suppose $Z$ is a weak solution to the SDE \eqref{Equation}. Applying It\=o's formula with $f\in C^\infty_c(\R)$ yields
		\begin{align*}
			M_t^f&=f(Z_t)-f(z)-\bigg(\Psi(1)+\int_{-1}^0 (u-1) V(du)\bigg)\int_0^t f'(Z_s)\sign{(Z_s)}\,ds-\frac{\sigma^2}{2} \int_0^t f''(Z_s)|Z_s|\,ds\\
			&\quad-\int_0^t \int_0^{\infty }\int_{-1}^1\big(f(Z_s +\1_{\{r|Z_s|\leq 1\}}Z_s(u-1))-f(Z_s)-f'(Z_s)\1_{\{r|Z_s|\leq 1\}}Z_{s}(u-1)\big)\,ds\,dr\,\bar\Pi(du)\\
			&= f(Z_t)-f(z)-\int_0^t \mathcal A f(Z_s)\,ds
		\end{align*}
		is a local martingale, where 
		\begin{align*}
			M_t^f
			&=\sigma \int_0^t f'(Z_s)\sqrt{|Z_s|} \,dB_s\\
			&\quad+\int_0^t \int_0^\infty\int_{-1}^1\1_{\{r|Z_{s-}|\leq 1\}}\big(f(Z_{s-}+Z_{s-}(u-1))-f(Z_{s-})\big)(\mathcal{N-N'})(ds,dr,du).
		\end{align*}
		Moreover, it is easy to see that $M^f$ is a true martingale. Indeed, by Theorem 51 of \cite{Protter} it suffices to verify $\E[\sup_{t\leq T}|M^f_t|]<\infty$ for all $T>0$. Applying the Burkholder-Davis-Gundy inequaliy (for the non-continuous martingale see \cite{DM}, p. 287) and the 		simple estimate 
		$\E\big[\sup_{t\leq T}|M^f_t|\big]\leq 1+\E\big[\sup_{t\leq T}|M^f_t|^2\big]$, we obtain
		\begin{align*}
			\E\big[\sup_{t\leq T}|M^f_t|\big]
			&\leq 1+2\sigma^2 \E\bigg[\sup_{t\leq T}\bigg|\int_0^t f'(Z_s)\sqrt{|Z_s|}\,dB_s\bigg|^2\bigg]\\
			&\quad+2\E\bigg[\sup_{t\leq T}\bigg|\int_0^t\int_0^{\frac{1}{|Z_{s-}|}}\int_{[-1,1]} \Big(f(Z_{s-}+Z_{s-}(u-1))-f(Z_{s-})\Big)\mathcal{(N-N')}(ds,dr,du)\bigg|^2\bigg]\\
			&\leq 1+ C\E\bigg[\int_0^T f'(Z_s)^2|Z_s|\,ds\bigg]\\
			&\quad+C\E\bigg[\int_0^T\int_0^{\frac{1}{|Z_{s-}|}}\int_{[-1,1]} \Big(f(Z_s+Z_s(u-1))-f(Z_s)\Big)^2ds\,dr\,\bar\Pi(du)\bigg].
		\end{align*}
		By Taylor's formula and the boundedness of $f'$ we find the upper bound
		\begin{align}\label{l'}
			\E\big[\sup_{t\leq T}|M^f_t|\big]& \leq 1+\big(\sup_z f'(z)\big)^2\bigg(C+C\int_{[-1,1]} (u-1)^2\,\bar\Pi(du)\bigg)\E\bigg[\int_0^T |Z_s|\,ds\bigg].
		\end{align}
		Note that the definition of $\bar\Pi$ implies 
		\begin{align*}
			\int_{-1}^1 (u-1)^2\,\bar\Pi(du)&=\int_{-1}^0 (u-1)^2\,V(du)+\int_{-\infty}^0 \big(e^u-1\big)^2\,\Pi(du)\\
			&\leq 2V\big([-1,0)\big)+\Pi\big((-\infty,-1]\big)+C \int_{-1}^0 u^2 \,\Pi(du)
		\end{align*}
		which is finite since $V(du)$ is a finite measure and $\Pi$ is a L\'evy measure.\\
		Next, we show that $\E[\int_0^T |Z_s|\,ds]$ is finite. From It\=o's isometry, Taylor's theorem and the estimate $(a_1+...+a_n)^2\leq n(a_1^2+...+a_n^2)$ for $a_i\in\R$ we get
		\begin{align*}
			&\quad\E\big[|Z_t|\big]\\
			&\leq 1+\E\big[|Z_t|^2\big]\\
			&\leq 1+ 4 z^2+ 4\bigg(\Psi(1)+\int_{-1}^0 (u-1) V(du)\bigg)^2\,t^2+4\sigma^2\E\bigg[\int_0^t |Z_s|\,ds\bigg]\\
			&\quad+4\E\bigg[\int_0^t \int_0^{\frac{1}{|Z_s|}}\int_{-1}^1 Z_s^2(u-1)^2\,ds\,dr\,\bar\Pi(du)\bigg]\\
			&\leq 1+4z^2+4\bigg(\Psi(1)+\int_{-1}^0 (u-1) V(du)\bigg)^2\,t^2+4\bigg(\sigma^2
			 +\int_{-1}^1 (u-1)^2\,\bar\Pi(du) \bigg) \int_0^T \E\big[|Z_s|\big] \,ds.
		\end{align*}
		Hence, Gronwall's inequality implies that $\E\big[|Z_t|\big]$ grows at most exponentially so that $\E\big[\int_0^T |Z_s|\,ds\big]$ is finite by Fubini's theorem. Now we can deduce from \eqref{l'} that $M_t^f$ is a martingale and the first part of the proof is complete.\\
		
		Conversely, suppose the law of the process $Z$ is a solution to the martingale problem $(\mathcal A,\delta_z)$. By a standard stopping time argument to allow for the test-function $f(z)=z$, we have
		\begin{align*}
			Z_t=z+\bigg(\Psi(1)+\int_{-1}^0 (u-1) V(du)\bigg) \int_0^t \sign(Z_s)\,ds+M_t,\quad t\geq 0,
		\end{align*}
		almost surely, for a  square-integrable martingale $M$ that we have to identify. Let $\mathcal C(ds,dz)$ be the optional random measure on $[0,\infty)\times \R$ defined by the jumps of $Z$:
		\begin{align*}
			\mathcal C(ds,dz)=\sum_{s>0} \1_{\{\Delta Z_s\neq 0\}}\delta_{(s, \Delta Z_{s})}(ds,dz),
		\end{align*}
		where $\Delta Z_s=Z_s-Z_{s-}$ is the jump of $Z$ at time $s$. If ${\mathcal C'}$ denotes the predictable compensator of $\mathcal C$, then page 376 of \cite{DM} shows that 
		\begin{align}\label{SM}
			Z_t=z+\bigg(\Psi(1)+\int_{-1}^0 (u-1) V(du)\bigg) \int_0^t \sign(Z_s)\,ds+M^c_t+M^d_t
		\end{align}
		for a continuous martingale $M^c$ and
		\begin{align*}
			M^d_t=\int_0^t \int_\R z \,\mathcal{(C-C')}(ds,dz).
		\end{align*}		
	 	We now have to identify the martingales $M^c$ and $M^d$. Applying It\=o's formula to the semimartingale representation \eqref{SM} of $Z$ yields
		\begin{align*}
			f(Z_t)&=f(z)+\bigg(\Psi(1)+\int_{-1}^0 (u-1) V(du)\bigg) \int_0^t f'(Z_s) \sign(Z_s)\,ds+\frac{1}{2}\int_0^t f''(Z_s)\,d[M^c_s,M^c_s]\\
			&\quad+\int_0^t \big(f(Z_{s}+z)-f(Z_{s})-f'(Z_{s})z )\big)\, \mathcal C'(ds,dz)+local\,\,martingale
		\end{align*}		
		for all $f\in C^\infty_c(\R)$. We can assume without loss of generality that the local martingale is a martingale since otherwise the rest of the proof can be carried out via localization. Comparing with the martingale problem $(\mathcal A,\delta_z)$ from \eqref{Generator} and using the uniqueness of the canonical decomposition for a semimartingale, we see that $d[M^c_s,M^c_s]= \sigma^2 |Z_s|ds$ and
		\begin{align*}
			\int_0^t\int_\R F(s,z)\, \mathcal C'(ds,dz)= \int_0^t \int_0^\infty \int_{-1}^1 F(s,\mathbf 1_{\{r|Z_s|\leq 1\}}Z_s(u-1))\,ds\,dr\,\bar\Pi(du).
		\end{align*}
		for any non-negative Borel function $F$ on $[0,\infty)\times \R$. Then we can find a Brownian motion $B$ and an independent Poisson point process $\mathcal N$ on  $(\Omega,\cG,(\cG_t)_{t\geq 0},P)$ by applying martingale representation 			theorems to \eqref{SM} (see for instance \cite{IW} page 84 and page 93).
	\end{proof}
	
		
	The construction of a solution to the martingale problem $(\mathcal A,\delta_z)$ is achieved with a series of lemmas. To give a rough idea how to construct solutions let us reconsider the simplest special case
	\begin{align}\label{bli}
			dZ_t=\sign(Z_t)\,dt+2 \sqrt{|Z_t|}\,dB_t,\quad Z_0=0,
	\end{align}
	and it's positive analogue
	\begin{align}\label{Z2}
		dZ_t&=\,dt+2\sqrt{Z_t}\,dB_t,\quad Z_0=0,
	\end{align}
	obtained for the absolute value. If $W$ is a Brownian motion, then we already noted that $Z^{(1)}_t=\sign(W_t) W_t^2$ is a weak solution to the SDE \eqref{bli} and furthermore $Z^{(2)}_t=W_t^2$ is a weak solution to the SDE \eqref{Z2}. Of course, $Z^{(1)}$ and $Z^{(2)}$ have a straight forward connection: given $Z^{(2)}$, $Z^{(1)}$ is obtained by reflecting every excursion at the origin with probability $1/2$.
	\smallskip	
	
	An analogous procedure could be applied to construct symmetric solutions for the jump-type SDE \eqref{Equation} since excursion theory for $|Z|$ exists ($|Z|$ is a positive self-similar Markov process). In the general case of the SDE \eqref{Equation} solutions additionally jump over 		zero so that a direct modification of the reflection idea seems not to work.
	In what follows we give a stochastic calculus construction that mimics the reflection idea but is robust enough to encounter jumps that change signs.\\
	
	The notations of Theorem \ref{TheoremMain} and Proposition \ref{Proposition} will be used in the sequel without explicit repetitions. 
	\begin{lemma}\label{Lemma1}
		Suppose $m\in \N$ and that $\mathcal M^m$ is a Poisson point process on $(0,\infty)\times \{-\frac 1 m, \frac 1 m \}$ independent of $B$ and $\mathcal N$ with intensity measure ${\mathcal{M}^m}'(ds,dv)=ds\otimes \Sigma(dv)$, where 
		$\Sigma(\big\{\frac 1 m \big\})=\Sigma(\big\{-\frac 1 m\big\}) =\frac m 2.$
		If we define
		\begin{align*}
			\sign_{(0)}(x)=\1_{\{x>0\}}-\1_{\{x<0\}},
		\end{align*}
		then
	there are unique strong solutions $Z^{m}$ to the SDE
	\begin{align}\label{EquationApprox}
	\begin{split}
		Z_t&=z+\bigg(\Psi(1)+\int_{-1}^0 (u-1) V(du)\bigg)\int_0^t\sign_{(0)}(Z_s)\,ds+\sigma\int_0^t \sqrt{|Z_s|}\,dB_s\\
		&\quad+\bigg(\Psi(1)+\int_{-1}^0 (|u|-1) V(du)\bigg)\int_0^t\int_{ \{\pm \frac 1 m \}}\1_{\{Z_{s-}= 0\}}v\,\mathcal M^m(ds,dv)\\
		&\quad+\int_0^t \int_0^{\frac{1}{|Z_{s-}|}\wedge m}\int_{[-1,1-\frac{1}{m}]} Z_{s-}(u-1)(\mathcal{N-N'})(ds,dr,du),\quad t\geq 0.
	\end{split}
	\end{align}
	\end{lemma}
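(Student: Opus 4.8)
Write $c_1:=\Psi(1)+\int_{-1}^0(u-1)\,V(du)$ and $c_2:=\Psi(1)+\int_{-1}^0(|u|-1)\,V(du)$, and note first that the restriction of $\bar\Pi$ to $[-1,1-\tfrac1m]$ is a \emph{finite} measure, of total mass $\lambda_m<\infty$ say, since $\bar\Pi$ can only have a pole at the point $1$. Splitting the compensator in the $\mathcal N$-integral of \eqref{EquationApprox} produces the absolutely continuous term $-\big(\int_{[-1,1-1/m]}(u-1)\,\bar\Pi(du)\big)\int_0^t\sign(Z_s)\min(1,m|Z_s|)\,ds$, so that \eqref{EquationApprox} takes the form
\begin{align*}
	Z_t=z+\int_0^t b_m(Z_s)\,ds+\sigma\int_0^t\sqrt{|Z_s|}\,dB_s+J^{\mathcal M}_t+J^{\mathcal N}_t,
\end{align*}
where $b_m(x)=\sign_{(0)}(x)\big(c_1-(\int_{[-1,1-1/m]}(u-1)\bar\Pi(du))\min(1,m|x|)\big)$ is bounded, continuous on $\R\setminus\{0\}$ with $b_m(0)=0$, the coefficient $x\mapsto\sqrt{|x|}$ is $\tfrac12$-H\"older, $J^{\mathcal M}$ has bounded rate $m$, is active only on $\{Z_{s-}=0\}$ and jumps by $c_2v\in\{\pm c_2/m\}$, and $J^{\mathcal N}$ is the compensated $\mathcal N$-integral whose relevant atoms ($r\in(0,m)$, $u\in[-1,1-\tfrac1m]$) form a Poisson process of rate $m\lambda_m$ and fire at instantaneous rate $\tfrac1{|Z_{s-}|}\wedge m\le m$ with amplitude $Z_{s-}(u-1)$ — which vanishes on $\{Z_{s-}=0\}$ and otherwise keeps $|Z|$ non-increasing with a possible sign change.

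The plan is to build the unique solution by concatenation along the locally finite set of candidate jump times of $J^{\mathcal M}$ and $J^{\mathcal N}$. Away from zero this is classical: on a time interval on which $Z$ keeps a constant sign and no jump occurs, setting $Y=|Z|$ reduces \eqref{EquationApprox} to the squared-Bessel-type equation
\begin{align*}
	dY_t=\Big(c_1-\big(\textstyle\int_{[-1,1-1/m]}(u-1)\bar\Pi(du)\big)\min(1,mY_t)\Big)\,dt+\sigma\sqrt{Y_t}\,d\beta_t,
\end{align*}
which has Lipschitz drift and $\tfrac12$-H\"older diffusion coefficient and hence, by the Yamada--Watanabe theorem, a pathwise unique non-negative strong solution; one runs it up to the minimum of its first hitting time of $0$ and the next candidate time, applies there the jump dictated deterministically by the corresponding Poisson point, and repeats. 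When $Z$ sits at $0$ one uses that $b_m$, the diffusion coefficient and the $\mathcal N$-amplitude all vanish at $0$, so that neither the Brownian, the drift nor the $\mathcal N$-part can move $Z$; it is therefore held at $0$ until the next $\mathcal M^m$-atom, at which it jumps to $\pm c_2/m$ with the sign carried by that atom. Splicing the excursion pieces and the holding-at-zero pieces gives a process defined for all $t\ge0$ without accumulation of jump times: between two successive visits to $0$ only finitely many jumps occur (rates $\le m$ and $m\lambda_m$), and each $\mathcal M^m$-jump resets $Z$ to $\pm c_2/m$ after an exponential holding time, so only finitely many excursions occur on any bounded interval. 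At every step the continuation is a deterministic functional of $B,\mathcal N,\mathcal M^m$, so the constructed process is a strong solution, and pathwise uniqueness is inherited piece by piece. That it indeed satisfies \eqref{EquationApprox} in integrated form, and that the local martingales arising from It\=o's formula are genuine martingales, follows exactly as in the proof of Proposition~\ref{Proposition} via the Burkholder--Davis--Gundy inequality and a Gronwall estimate for $\E[|Z_t|]$.

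The delicate point, which I expect to be the main obstacle, is the claim that a solution arriving at $0$ is forced to remain at $0$ until the next $\mathcal M^m$-atom, i.e. that there is no spurious continuous excursion away from $0$: because the diffusion coefficient degenerates precisely at the discontinuity point of the drift, a Zvonkin-type regularisation argument is not available. I would treat this once more through $Y=|Z|$, which by the It\=o--Tanaka formula solves a scalar equation with $\tfrac12$-H\"older coefficient and a local-time term at $0$; comparison with scaled squared-Bessel processes identifies how $0$ may be entered and left by the continuous dynamics, and since $Z$ can change sign only at the deterministically signed jumps of $\mathcal N$ and $\mathcal M^m$, this determines $|Z|$ and then $Z$, giving simultaneously strong existence and pathwise uniqueness of $Z^m$.
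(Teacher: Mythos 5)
Your construction is the same one the paper uses: integrate out the compensator of the $\mathcal N$-integral to get a bounded drift, solve pathwise between the (locally finite, rate $\le m+m\lambda_m$) candidate jump times using the local Lipschitz property away from zero (the paper invokes this directly rather than Yamada--Watanabe for $|Z|$, but that is cosmetic), hold the process at zero until the next $\mathcal M^m$-atom, and interlace. The paper compresses all of this into a few lines and a reference to \cite{DB}, so in terms of route the two proofs coincide.

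The one substantive point is the "delicate point" you flag at the end, and here your instinct is better than your proposed fix. You suggest comparing $Y=|Z|$ with scaled squared Bessel processes to show that a solution reaching $0$ must stay there until the next $\mathcal M^m$-atom. But that comparison, carried out honestly, shows the opposite whenever $c_1:=\Psi(1)+\int_{-1}^0(u-1)V(du)>0$: near zero the equation for $Y$ reads $dY=(c_1+O(Y))\,dt+\sigma\sqrt{Y}\,dB$ plus finitely many jumps, and the (pathwise unique, hence strong) reflecting squared-Bessel-type solution of $dY=c_1\,dt+\sigma\sqrt{Y}\,dB$ started at $0$ spends zero Lebesgue time at $0$, so it also solves the equation with drift $c_1\sign_{(0)}$; since its zero set is Lebesgue-null and independent of $\mathcal M^m$, the $\mathcal M^m$-integral vanishes along it almost surely. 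This yields a second strong solution competing with the held-at-zero one, so pathwise uniqueness in the unrestricted sense fails in that parameter regime; only when $c_1\le 0$ is the process genuinely forced to sit at $0$ (a nonnegative semimartingale with nonpositive drift and coefficients vanishing at $0$ cannot escape). The paper's proof simply asserts "when the solution hits zero it remains until it jumps according to a jump of $\mathcal M^m$" without argument, so your proposal is no less rigorous than the original; but to make the lemma correct you should either restrict to $c_1\le 0$, or state existence of a strong solution with the prescribed holding behaviour together with pathwise uniqueness away from the zero set (which is all the subsequent limiting argument actually uses), rather than unqualified pathwise uniqueness.
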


	\begin{proof}[Proof of Lemma \ref{Lemma1}]
		Suppressing the jumps according to the point process $\mathcal N$ and integrating out $dr$ in the remaining compensator integral yields the SDE
		\begin{align*}
			\begin{split}
				Z_t&=z+\bigg(\Psi(1)+\int_{-1}^0 (u-1) V(du)\bigg)\int_0^t\sign_{(0)}(Z_s)\,ds+\sigma\int_0^t \sqrt{|Z_s|}\,dB_s\\
				&\quad+\bigg(\Psi(1)+\int_{-1}^0 (|u|-1) V(du)\bigg)\int_0^t\int_{ \{\pm \frac 1 m \}}\1_{\{Z_{s-}= 0\}}v\,\mathcal M^m(ds,dv)\\
				&\quad-\int_{[-1,1-\frac{1}{m}]} (u-1)\bar\Pi(du) \int_0^t\sign(Z_s)\big(1\wedge m|Z_s|\big)\,ds,
			\end{split}
		\end{align*}
		where we used $\frac{x}{|x|}=\sign(x)$ for $ x\neq 0$. Whenever a solution is bounded away from zero, pathwise uniqueness and strong existence holds due to the local Lipschitz property of the integrands away from zero. When the solution hits zero it remains until it jumps according to a jump of $\mathcal M^m$. Since both jump integrals only jump with bounded rate, a strong solution of \eqref{EquationApprox} can be constructed piecewise via the interlacing method.\\
		This is a standard argument, so we omit the details and refer for instance to the proof of Proposition 3.5 of \cite{DB}.
\end{proof}
	The choice of the SDE \eqref{EquationApprox} is motivated by the reflected  excursion idea for a construction of solutions: whenever solutions are away from zero, they follow the original SDE \eqref{Equation}, the "pseudo excursions" taking values in $\R_*$. At zero the pseudo excursions stop and after an exponential time a new pseudo excursion is started at a small initial state chosen symmetrically by $\mathcal M^m$. The symmetric restarting is needed to construct a symmetric process; non-symmetric restarting might be used to construct skew-self-similar Markov processes.\\
	It is crucial to redfine the sign-function to be zero at zero since otherwise the constructed process is not a solution to \eqref{EquationApprox}. As $m$ increases, the times between pseudo excursions and the new initial states tend to zero so that possible limiting processes leave zero continuously.
\smallskip
	
	 The construction shows that, for all $z$ and $m$,
\begin{align*}
	\int_0^\infty \1_{\{Z^{m}_s=0\}}\,ds=\infty,\quad a.s.,
\end{align*}
if the pseudo excursions hit zero in finite time. Hence, a priori it is possible that any limiting process $Z$ is trapped at zero. To guarantee that $Z$ is not such a trivial solution, under Condition \eqref{Condition} we are going to deduce
\begin{align}\label{abs0}
	\int_0^\infty \1_{\{Z_s=0\}}\,ds=0,\quad a.s.
\end{align}
In order to be able to verify \eqref{abs0}, the constant in front of the stochastic integral with respect to $\mathcal M^m$ turns out to be crucial. This might be surprising since in the limit $m\to\infty$ this integral vanishes without leaving a compensator term since it is a martingale. In order to show \eqref{abs0}, we show in Lemma \ref{Lemma3} below that the limiting absolute value $|Z|$ solves an SDE with constant drift. It is not clear a priori that the absolute value has constant drift since in \eqref{EquationApprox} the drift is zero at zero. To ensure that the drift for the absolute value is constant (and not zero at zero) we use that the stochastic integral with respect to $\mathcal M^m$ leaves in the absolute value the compensator integral
\begin{align*}
	&\quad\bigg(\Psi(1)+\int_{-1}^0 (|u|-1) V(du)\bigg)\int_0^t \1_{\{Z^m_s=0\}}\,ds\\
	&=\bigg(\Psi(1)+\int_{-1}^0 (u-1) V(du)\bigg)\int_0^t \1_{\{Z^m_s=0\}}\,ds+\bigg(\int_{-1}^0( |u|-u)V(du)\bigg)\int_0^t \1_{\{Z^m_s=0\}}\,ds.
\end{align*}
The summands compensate the time spend at zero that is not taken into account by $\sign_{(0)}$ for the drift and $\sign_{(0)}$ that appears as limit of $\big(\frac{1}{|Z_{s}|}\wedge m\big) Z_s$ for the compensated integral with respect to $\mathcal N$.

\begin{lemma}\label{Lemma7}
	Suppose $Z^{m}$ is as in Lemma \ref{Lemma1}, then
	\begin{align}\label{||}
		\begin{split}
			|Z^{m}_t|&=|z|+\bigg(\Psi(1)+\int_{-1}^0 (u-1) V(du)\bigg)\,t\\
			&\quad+\bigg(\int_{-1}^0 (|u|-u) V(du)\bigg) \int_0^t\big(\1_{\{Z^{m}_s=0\}}+\big(1\wedge m |Z^{m}_s|\big)\big)\,ds\\
			&\quad+\sigma \int_0^t \sign(Z^{m}_s)\sqrt{|Z^{m}_s|}\,dB_s\\
			&\quad+\bigg(\Psi(1)+\int_{-1}^0 (|u|-1) V(du)\bigg)\int_0^t\int_{ \{\pm \frac 1 m \}}\1_{\{Z^{m}_{s-}= 0\}}|v|\,\mathcal{(M}^m-{\mathcal{M}^m}')(ds,dv)\\
			&\quad+\int_0^t \int_0^{\frac{1}{|Z^{m}_{s-}|}\wedge m}\int_{[-1,1-\frac{1}{m}]}  |Z^{m}_{s-}|(|u|-1)(\mathcal{N-N'})(ds,dr,du),\quad t\geq 0,
		\end{split}
	\end{align}
	almost surely.
\end{lemma}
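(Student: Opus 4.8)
The plan is to obtain \eqref{||} by applying the Meyer--It\=o (Tanaka) formula for the convex function $f(x)=|x|$ to the semimartingale $Z^{m}$ solving \eqref{EquationApprox}, and then to check that the local time term it produces vanishes identically.

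First I would read off from \eqref{EquationApprox} the canonical decomposition of $Z^{m}$: its only continuous finite--variation component is the drift $\big(\Psi(1)+\int_{-1}^0(u-1)\,V(du)\big)\int_0^t\sign_{(0)}(Z^m_s)\,ds$; its continuous local martingale part is $\sigma\int_0^t\sqrt{|Z^m_s|}\,dB_s$, so that $d[Z^m,Z^m]^c_s=\sigma^2|Z^m_s|\,ds$; and its jumps come from the two Poissonian integrals, the $\mathcal M^m$--integral already being a martingale because $\int_{\{\pm1/m\}}v\,\Sigma(dv)=0$. The cut--offs $u\in[-1,1-\tfrac1m]$ and $r\le\tfrac1{|Z^m_{s-}|}\wedge m$ keep $\bar\Pi$ away from its only possible pole (at $1$) and bound the jump rate, so $Z^m$ has finitely many jumps on compacts and all the sums below converge absolutely. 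With $f'_-=\sign$ (the left derivative of $|x|$, so $\sign(0)=-1$ as in the Notations) and $f''=2\delta_0$, the Meyer--It\=o formula gives
\begin{align*}
|Z^m_t| &= |z|+\int_0^t\sign(Z^m_{s-})\,dZ^m_s+L^0_t(Z^m)\\
&\quad+\sum_{0<s\le t}\Big(|Z^m_s|-|Z^m_{s-}|-\sign(Z^m_{s-})\,\Delta Z^m_s\Big),
\end{align*}
where $L^0(Z^m)$ is the semimartingale local time of $Z^m$ at $0$.

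The middle of the proof is sign--sensitive but routine. In $\int_0^t\sign(Z^m_{s-})\,dZ^m_s$ one uses $\sign(x)\sign_{(0)}(x)=\1_{\{x\ne0\}}$ to turn the drift into $\big(\Psi(1)+\int_{-1}^0(u-1)V(du)\big)\int_0^t\1_{\{Z^m_s\ne0\}}\,ds$, keeps the Brownian integral as $\sigma\int_0^t\sign(Z^m_s)\sqrt{|Z^m_s|}\,dB_s$, uses $\sign(0)=-1$ on the $\mathcal M^m$--integral, and $\sign(Z^m_{s-})Z^m_{s-}=|Z^m_{s-}|$ on the $\mathcal N$--integral. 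In the correction sum an $\mathcal M^m$--jump (from $0$ to $\big(\Psi(1)+\int_{-1}^0(|u|-1)V(du)\big)v$) contributes $\big(\Psi(1)+\int_{-1}^0(|u|-1)V(du)\big)(|v|+v)$, using $\sign(0)=-1$ and that this coefficient is positive by Condition \eqref{Condition}, while an $\mathcal N$--jump (from $Z^m_{s-}$ to $Z^m_{s-}u$) contributes $|Z^m_{s-}|(|u|-u)$. Adding the $\mathcal M^m$ pieces cancels the bare $v$ and, after compensation (with $\int_{\{\pm1/m\}}|v|\,\Sigma(dv)=1$), produces the $(\mathcal M^m-{\mathcal M^m}')$--martingale of \eqref{||} together with the drift $\big(\Psi(1)+\int_{-1}^0(|u|-1)V(du)\big)\int_0^t\1_{\{Z^m_s=0\}}\,ds$; adding the $\mathcal N$ pieces and compensating (noting that $|u|-u$ vanishes on $(0,1]$ and that $\bar\Pi$ restricted to $[-1,0)$ equals $V$) produces the $(\mathcal N-\mathcal N')$--martingale of \eqref{||} together with the drift $\big(\int_{-1}^0(|u|-u)V(du)\big)\int_0^t(1\wedge m|Z^m_s|)\,ds$. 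Finally, since $\big(\Psi(1)+\int_{-1}^0(|u|-1)V(du)\big)-\big(\Psi(1)+\int_{-1}^0(u-1)V(du)\big)=\int_{-1}^0(|u|-u)V(du)$, the two $ds$--drifts recombine into $\big(\Psi(1)+\int_{-1}^0(u-1)V(du)\big)t+\big(\int_{-1}^0(|u|-u)V(du)\big)\int_0^t\1_{\{Z^m_s=0\}}\,ds$, which is exactly the drift in \eqref{||} --- provided $L^0_t(Z^m)\equiv0$.

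The hard part is precisely this last claim: $L^0_t(Z^m)=0$ is not automatic, since $Z^m$ genuinely spends positive Lebesgue time at $0$. I would derive it from the occupation--times formula together with $d[Z^m,Z^m]^c_s=\sigma^2|Z^m_s|\,ds$: for every $\eps>0$,
\[
\int_0^\eps L^a_t(Z^m)\,da=\sigma^2\int_0^t\1_{\{0<Z^m_s\le\eps\}}|Z^m_s|\,ds,
\]
where we may restrict to $\{0<Z^m_s\le\eps\}$ because $\1_{\{Z^m_s=0\}}|Z^m_s|=0$. Dividing by $\eps$, the integrand $\tfrac1\eps\1_{\{0<Z^m_s\le\eps\}}|Z^m_s|$ is bounded by $1$ and tends to $0$ pointwise as $\eps\downarrow0$, so dominated convergence yields $\tfrac1\eps\int_0^\eps L^a_t(Z^m)\,da\to0$; since $a\mapsto L^a_t(Z^m)$ is right--continuous this limit equals $L^0_t(Z^m)$, hence $L^0_t(Z^m)=0$ (the analogous estimate on $[-\eps,0]$ shows $a\mapsto L^a_t(Z^m)$ is even continuous at $0$, consistent with the continuous martingale part of $Z^m$ vanishing there). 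Substituting $L^0(Z^m)\equiv0$ into the Meyer--It\=o expansion gives \eqref{||}.
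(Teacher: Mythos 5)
Your proposal is correct and takes essentially the same route as the paper: a Tanaka/Meyer--It\=o expansion of $|x|$ applied to $Z^m$, the verification that the local time at zero vanishes via $d[Z^m,Z^m]^c_s=\sigma^2|Z^m_s|\,ds$ together with dominated convergence, and the same bookkeeping of the jump corrections and compensators leading to the drift recombination. The only differences are cosmetic (the paper first isolates the set where $Z^m$ sits at zero and invokes Protter's limit formula for the local time, while you use the occupation-times formula and right-continuity of $a\mapsto L^a_t$), so there is nothing to add.
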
	
\begin{proof}
Let us denote 	by $\tau_1<\tau_2<...$ the jumps of the Poissonian integral driven by $\mathcal M^m$ which are precisely the times when $Z^m$ leaves zero.  Further, $\delta_1<\delta_2<...$ denote the successive first hitting times of zero which do not accumulate since 	paths are c\`{a}dl\`{a}g and solutions only leave zero with a jump of size $\pm \frac{\Psi(1)+\int_{-1}^0 (|u|-1) V(du)}{m}$.  If we define $ZERO:= [\delta_1,\tau_1)\cup [\delta_2,\tau_2)\cup ...$, then 
	\begin{align*}	
		Z^{m}_s=0\quad \forall s \in ZERO\qquad \text{and}\qquad Z^{m}_s\neq 0\quad \forall s \notin ZERO.
	\end{align*}
	Consequently, $|Z^{m}_s|=0$ for $s \in ZERO$ so that it suffices to apply Tanaka's formula to $Z^{m}$ on $ZERO^c$. Let us first show that the semimartingale local time at zero vanishes.
	The truncation by $m$ implies that jumps are summable so that Corollary 3 on page 178 of \cite{Protter} yields
	\begin{align*}
		L_t^0&=\lim_{\eps\to 0} \frac{1}{\eps}\int_{0}^t \1_{\{|Z^{m}_s|\leq \eps\}} d[Z^{m}_s,Z^{m}_s]^c\\
				&=\lim_{\eps\to 0} \frac{\sigma^2}{\eps}\bigg[\sum_{j=1}^{i-1} \int_{\tau_j}^{\delta_{j+1}} \1_{\{|Z_s^{m}|\leq \eps\}}|Z_s^{m}|\,ds+\int_{\tau_i}^t \1_{\{|Z_s^{m}|\leq \eps\}}|Z_s^{m}|\,ds\bigg]\\
			&\leq \lim_{\eps\to 0} \sigma^2 \bigg[\sum_{j=1}^{i-1} \int_{\tau_j}^{\delta_{j+1}} \1_{\{|Z_s^{m}|\leq \eps\}}\,ds+\int_{\tau_i}^t \1_{\{|Z_s^{m}|\leq \eps\}}\,ds\bigg],\quad t\in [\tau_i,\delta_{i+1}).
	\end{align*}
	Using dominated convergence, the righthand side converges to zero since $Z^{m}$ does not spend time at zero on $ZERO^c$.
	Next, Tanaka's formula can be applied without additional local time term to deduce the semimartingale decomposition
	\begin{align*}
		&\quad |Z^{m}_t|\\
		&=|z|+\bigg(\Psi(1)+\int_{-1}^0 (u-1) V(du)\bigg)\int_0^t \sign(Z^{m}_s)\sign_{(0)}(Z^{m}_s)\,ds+\sigma \int_0^t \sign(Z^{m}_s)\sqrt{|Z^{m}_s|}\,dB_s\\
		&\quad+\int_0^t\int_{ \{\pm \frac 1 m \}}\bigg(\bigg|Z^{m}_{s-}+\bigg(\Psi(1)+\int_{-1}^0 (|u|-1) V(du)\bigg)\1_{\{Z^{m}_{s-}= 0\}}v\bigg|-\big|Z^{m}_{s-}\big|\bigg)\,\mathcal{M}^m(ds,dv)\\
				&\quad+\int_0^t \int_0^{\frac{1}{|Z^{m}_{s-}|}\wedge m}\int_{[-1,1-\frac{1}{m}]}  |Z^{m}_{s-}|(|u|-1)(\mathcal{N-N'})(ds,dr,du)\\
		&\quad+\int_0^t\big(1\wedge m |Z^{m}_s|\big)\,ds \int_{[-1,1-\frac{1}{m}]} (|u|-u)\bar\Pi(du).
	\end{align*}
	Adding and subtracting the compensator integral for $\mathcal M^m$, we obtain as a drift
	\begin{align*}
		&\quad\bigg(\Psi(1)+\int_{-1}^0 (u-1) V(du)\bigg)\int_0^t \1_{\{Z^{m}_s\neq 0\}}\,ds\\
		&\quad+\int_0^t\int_{ \{\pm \frac 1 m \}}\1_{\{Z^{m}_{s-}= 0\}}\bigg(\Psi(1)+\int_{-1}^0 (|u|-1) V(du)\bigg)v\,{\mathcal{M}^m}'(ds,dv)\\
				&\quad+\int_0^t\big(1\wedge m |Z^{m}_s|\big)\,ds \int_{[-1,1-\frac{1}{m}]} (|u|-u)\bar\Pi(du)\\
						&=\bigg(\Psi(1)+\int_{-1}^0 (u-1) V(du)\bigg)\int_0^t \1_{\{Z^{m}_s\neq 0\}}\,ds\\
						&\quad+\bigg(\bigg(\Psi(1)+\int_{-1}^0 (u-1) V(du)\bigg)+\bigg(\int_{-1}^0 (|u|-u) V(du)\bigg)\bigg)\int_0^t\int_{ \{\pm \frac 1 m \}}\1_{\{Z^{m}_s= 0\}}|v|\,\Sigma(dv)\,ds\\
				&\quad+\int_0^t\big(1\wedge m |Z^{m}_s|\big)\,ds \int_{[-1,1-\frac{1}{m}]} (|u|-u)\bar\Pi(du)\\
		&=\bigg(\Psi(1)+\int_{-1}^0 (u-1) V(du)\bigg)\,t+\bigg(\int_{-1}^0 (|u|-u) V(du)\bigg)\int_0^t \1_{\{Z^{m}_s=0\}}\,ds\\
				&\quad+\int_0^t\big(1\wedge m |Z^{m}_s|\big)\,ds \int_{[-1,1-\frac{1}{m}]} (|u|-u)\bar\Pi(du).
	\end{align*}
	Using the definition of $\bar\Pi$ we can simplify the final integral to
	\begin{align*}
		\int_{[-1,1-\frac{1}{m}]} (|u|-u)\bar\Pi(du)= \int_{-1}^0 (|u|-u)\bar\Pi(du)= \int_{-1}^0 (|u|-u)V(du)
	\end{align*}	
	from which the claim follows.
\end{proof}

Next, we show that there are limits of the sequence $Z^m$:

	\begin{lemma}\label{Lemma4}
		For any $z\in \R$ the sequence $(Z^{m})_{m\in\N}$ constructed in Lemma \ref{Lemma1} is tight in the Skorokhod topology on $\mathbb D$.
	\end{lemma}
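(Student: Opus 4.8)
The plan is to establish tightness of $(Z^m)_{m\in\N}$ in the Skorokhod topology via the standard Aldous--Rebolledo criterion: it suffices to check that (i) for each fixed $t$ the family $\{Z^m_t\}_m$ is tight in $\R$, and (ii) the drift and quadratic-variation components of the semimartingales $Z^m$ satisfy the Aldous condition on stopping times, i.e. for any sequence of $(\cG_t)$-stopping times $\tau_m$ bounded by $T$ and any $\delta_m\downarrow 0$, the increments over $[\tau_m,\tau_m+\delta_m]$ of the finite-variation part and of the predictable quadratic variation go to zero in probability. The natural way to verify both is to use the moment bounds already contained in the proof of Proposition \ref{Proposition} — in particular the Gronwall argument showing $\sup_{s\leq T}\E[|Z_s|]\leq C_T$ — applied uniformly in $m$.

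First I would record that the coefficients of the SDE \eqref{EquationApprox} are dominated, uniformly in $m$, by the coefficients of the limiting SDE \eqref{Equation}: the extra $\mathcal M^m$-integral has compensator $\big(\Psi(1)+\int_{-1}^0(|u|-1)V(du)\big)\int_0^t \1_{\{Z^m_s=0\}}\,ds$, which is bounded by a constant times $t$, and its quadratic variation is $\frac{1}{m}\big(\Psi(1)+\int_{-1}^0(|u|-1)V(du)\big)^2\int_0^t\1_{\{Z^m_s=0\}}\,ds\to 0$; the truncation $\frac{1}{|Z_{s-}|}\wedge m$ only decreases the jump intensity and the restriction to $u\in[-1,1-\frac1m]$ only removes mass near $u=1$ where $(u-1)^2$ is small. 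Hence, repeating verbatim the It\=o-isometry-plus-Gronwall computation from the proof of Proposition \ref{Proposition}, I get a bound $\sup_m \sup_{s\leq T}\E[|Z^m_s|]\leq C_T$ and, squaring, $\sup_m\sup_{s\leq T}\E[|Z^m_s|^2]\leq C_T'$, which gives (i) by Markov's inequality.

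For (ii), write $Z^m_t = z + A^m_t + M^m_t$ where $A^m$ is the (locally bounded) drift and $M^m$ is the martingale part, with $\langle M^{m,c}\rangle_t = \sigma^2\int_0^t|Z^m_s|\,ds$ plus the jump contributions $\int_0^t\int\int |Z^m_{s-}|^2(u-1)^2\,\1_{\{r|Z^m_{s-}|\leq 1\wedge m\}}\,ds\,dr\,\bar\Pi(du)$ and the vanishing $\mathcal M^m$-term. All of these are absolutely continuous in $t$ with density bounded by $C(1+|Z^m_s|)$; therefore for stopping times $\tau_m\leq T$,
\begin{align*}
\E\big[|A^m_{\tau_m+\delta_m}-A^m_{\tau_m}|\big] + \E\big[\langle M^m\rangle_{\tau_m+\delta_m}-\langle M^m\rangle_{\tau_m}\big] \leq C\,\E\Big[\int_{\tau_m}^{\tau_m+\delta_m}(1+|Z^m_s|)\,ds\Big]\leq C(1+C_T)\,\delta_m \to 0,
\end{align*}
using the uniform first-moment bound and the optional sampling / Fubini argument. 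This verifies the Aldous condition for both $A^m$ and $\langle M^m\rangle$, and an application of the Aldous--Rebolledo theorem (e.g. Joffe--M\'etivier, or Jacod--Shiryaev \cite{JS} VI.4.13) yields tightness of $(Z^m)$ in $\mathbb D$.

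The main obstacle I anticipate is \emph{not} the moment estimates, which are essentially a rerun of Proposition \ref{Proposition}, but making the uniformity in $m$ genuinely clean: one must check that the $\mathcal M^m$-contributions really are negligible and not secretly blowing up, and that the $m$-dependent truncations in the $\mathcal N$-integral do not spoil the bound $\int_{[-1,1]}(u-1)^2\,\bar\Pi(du)<\infty$ used as the key finite constant. Once one observes that truncation only shrinks the relevant integrals, everything goes through. A subtler but separate point — which the lemma does \emph{not} claim and which is deferred to the subsequent lemmas — is that the limit need not be a priori nontrivial; tightness alone gives subsequential limits in $\mathbb D$, and it is Condition \eqref{Condition} (entering through Lemma \ref{Lemma7}) that later rules out the degenerate limit trapped at zero.
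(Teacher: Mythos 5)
Your proposal follows essentially the same route as the paper: Aldous's criterion, with (i) verified by an It\=o-isometry-plus-Gronwall second-moment bound uniform in $m$, and (ii) by estimating the increment over $[\tau_m,\tau_m+\delta_m]$; your Aldous--Rebolledo phrasing (controlling the finite-variation part and the bracket separately) reduces to the same computation as the paper's direct $L^2$ estimate of $Z^m_{\tau_m+\delta_m}-Z^m_{\tau_m}$.

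One step needs tightening: the bound
\begin{align*}
\E\Big[\int_{\tau_m}^{\tau_m+\delta_m}(1+|Z^m_s|)\,ds\Big]\leq C(1+C_T)\,\delta_m
\end{align*}
does not follow from $\sup_m\sup_{s\leq T}\E[|Z^m_s|]\leq C_T$ by Fubini alone, because the integration interval is random; one needs $\sup_m\E\big[\sup_{s\leq T+1}|Z^m_s|\big]<\infty$, which is obtained by rerunning the moment argument with the Burkholder--Davis--Gundy inequality in place of It\=o's isometry (this is exactly how the paper closes the argument). With that supremum-moment bound inserted, your proof is complete.
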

	\begin{proof}
		For the proof we apply Aldous's tightness criterion (see Aldous \cite{Aldous2}). According to Aldous, to prove that  {$\{ Z^{m}:m\in\N\}$} is tight in $\D$ it is enough to show that
	 	\begin{itemize}
   			\item[(i)] for every fixed $t\geq 0$, the set of random variables  {$\{ Z^{m}_t:m\in\N\}$} is tight,
   			\item[(ii)] for every sequence of stopping times $(\tau_m)_{m\in\N}$ (with respect to the filtration $(\cG_t)_{t\geq 0}$) bounded above by $T>0$ and for every sequence of positive real numbers $(\delta_m)_{m\in\N}$ converging to $0$,
 	              		$Z^{m}_{\tau_m +\delta_m} -  Z^{m}_{\tau_m} \to 0$ in probability as $m\to\infty$.
 		\end{itemize}
		To prove (i), by Markov's inequality it is enough to check that, for every fixed $t\geq 0$,
 		\begin{align}\label{help17}
  			\sup_{m\in\N} E\big[  (Z^{m}_t)^2 \big]<\infty.
 		\end{align}	
		Using that $(a+b+c+d+e)^2\leq {5(a^2+b^2+c^2+d^2+e^2)}$ for ${a,b,c,d,e \in\R}$, we obtain that $E\big[(Z^{m}_t)^2\big]$ can be bounded by
 		\begin{align*}
  			&\quad 5z^2  + 5\bigg(\Psi(1)+\int_{-1}^0 (u-1) V(du)\bigg)^2E\bigg[\int_0^t\sign_{(0)}(Z_s^{m}) \,ds\bigg]^2+5\sigma^2 E\bigg[\int_0^t  \sqrt{\big|Z^{m}_s\big|} \,d B_s \bigg]^2\\
			&+5\bigg(\Psi(1)+\int_{-1}^0 (|u|-1) V(du)\bigg)^2E\bigg[\int_0^t \int_{\{-\frac 1 m,\frac 1 m\}} \1_{\{Z^{m}_{s-}=0\}}v\,\mathcal{(M}^m-{\mathcal{M}^m}')(ds,dv)\bigg]^2\\
  			&+5E\bigg[\int_0^t\int_0^{\frac{1}{|Z^{m}_{s-}|}\wedge m} \int_{[-1,1-\frac{1}{m}]}  Z^{m}_{s-}(u-1)(\mathcal{N-N'})(ds,dr,du)\bigg]^2
		\end{align*}
		which, via It\=o's isometry (for the Poissonian integral see for instance page 62 in Ikeda and Watanabe \cite{IW}), can be bounded from above by
	         \begin{align*}
	         		&\quad 5z^2 +{5\bigg(\Psi(1)+\int_{-1}^0 (u-1) V(du)\bigg)^2}t^2+5\sigma^2 E\bigg[\int_0^t |Z^{m}_s|\,\dd s \bigg]\\
			&\quad+5\,t\,\bigg(\Psi(1)+\int_{-1}^0 (|u|-1) V(du)\bigg)^2\int_{\{-\frac 1 m,\frac 1 m\}} v^2 \, \Sigma(dv)\\
			&\quad+5E\bigg[\int_0^t\int_0^{\frac{1}{|Z_s^{m}|}}\int_{[-1,1-\frac{1}{m}]} ( Z_s^{m})^2(u-1)^2\,\dd s\,\dd r\,\bar\Pi(\dd u)\bigg]\\
     			& \leq 5z^2 + 5\bigg(\Psi(1)+\int_{-1}^0 (u-1) V(du)\bigg)^2t^2+5\bigg(\Psi(1)+\int_{-1}^0 (|u|-1) V(du)\bigg)^2 \frac{t}{m}\\
			&\quad+ \bigg(5\sigma^2+5\int_{-1}^1 (u-1)^2\,\bar\Pi(du) \bigg)\int_0^t E[|Z_s^{m}|]\,ds
     		\end{align*}
		so that the estimate $\E[|Z^{m}_s|]\leq 1+\E[(Z_s^{m})^2]$ combined with Gronwall's inequality yields the claim.\\

		Now we turn to (ii) proving the stronger statement that $Z^{m}_{\tau_m+\delta_m} -  Z^{m}_{\tau_m}$ converges to 0 in $L^2$ as ${ m\to\infty}$. Namely, by the SDE \eqref{EquationApprox} and the splitting of summands as before,
		\begin{align*}
   			&\quad E\Big[\big \vert  Z^{m}_{\tau_m+\delta_m} -  Z^{m}_{\tau_m}\big\vert^2 \Big]\\
   			&\leq 4\bigg(\Psi(1)+\int_{-1}^0 (u-1) V(du)\bigg)^2E\bigg[ \int_{\tau_m}^{\tau_m+\delta_m}\sign_{(0)}(Z_s^{m}) \,ds\bigg]^2+ 4\sigma^2 E\bigg[\int_{\tau_m}^{\tau_m+\delta_m}\sqrt{|Z^{m}_s|}\,\dd B_s\bigg]^2\\
			&\quad+4\bigg(\Psi(1)+\int_{-1}^0 (|u|-1) V(du)\bigg)^2E\bigg[\int_{\tau_m}^{\tau_m+\delta_m}\int_{\{-\frac 1 m,\frac 1 m\}}\1_{\{Z^{m}_{s-}=0\}}v\,\mathcal{(M}^m-{\mathcal{M}^m}')(\dd s,\dd v) \bigg]^2\\
  			&\quad + 4E\bigg[\int_{\tau_m}^{\tau_m+\delta_m}\int_0^{\frac{1}{|Z^{m}_{s-}|}\wedge m}\int_{[-1,1-\frac{1}{m}]} Z^{m}_{s-}(u-1)(\mathcal N-\mathcal N')(\dd s,\dd r,\dd u) \bigg]^2.
 		\end{align*}
		The first  summand can be estimated by $C\delta^2_m$ and, hence, can be neglected. By Proposition 3.2.10 in Karatzas and Shreve \cite{KS} we obtain
		\begin{align*}
    			E\bigg[\int_{\tau_m}^{\tau_m+\delta_m}\sqrt{ |Z^{m}_s|}\,\dd B_s\bigg]^2= E\bigg[\int_{\tau_m}^{\tau_m+\delta_m}  |Z^{m}_s| \,\dd s \bigg],
 		\end{align*}
 		and, by Theorem II.1.33 in Jacod and Shiryaev \cite{JS}, the optimal stopping theorem, using the same arguments as in the proof of (3.2.22) in Karatzas and Shreve \cite{KS}, yields
 		\begin{align*}
   			&E\bigg[\int_{\tau_m}^{\tau_m+\delta_m}\int_0^{\frac{1}{|Z^{m}_{s-}|}\wedge m}\int_{[-1,1-\frac{1}{m}]} Z^{m}_{s-}(u-1)(\mathcal N-\mathcal N')(\dd s,\dd r,\dd u) \bigg]^2\\
   			&\quad\leq E\bigg[  \int_{\tau_m}^{\tau_m+\delta_m}\int_0^{\frac{1}{|Z^{m}_s|}}\int_{[-1,1-\frac{1}{m}]} ( Z^{m}_s)^2(u-1)^2 \,\dd s\,\dd r\,\bar\Pi(\dd u) \bigg]\\
                        	&\quad\leq E\bigg[  \int_{\tau_m}^{\tau_m+\delta_m}|Z^{m}_s| \,\dd s\,\bigg] \int_{-1}^1 (u-1)^2\bar\Pi(\dd u).
 		\end{align*}
		For the integral with respect to $\mathcal M$ a similar arguments gives the upper bound $C_3 \delta_m$.
		In total this shows that (we can suppose that $\delta_m\leq 1$),
 		\begin{align*}
			E\big[ \vert  Z^{m}_{\tau_m+\delta_m} -  Z^{m}_{\tau_m}\vert^2 \big]
			&\leq C_1\delta_m^2+C_2 \delta_m+C_3E\bigg[  \int_{\tau_m}^{\tau_m+\delta_m}|Z^{m}_s| \,\dd s\,\bigg]\\
			&\leq C_1\delta_m^2+C_2\delta_m+C_3 \delta_m E\bigg[\sup_{t\leq T+1} |Z^{m}_t|\bigg].
    		\end{align*}
   		Hence, the proof is complete if we can show that $E\big[\sup_{t\leq T+1} |Z^{m}_s|\big]$ is bounded in $m$. But this follows easily with the same arguments exploited for (i) using the Burkholder-Davis-Gundy inequality instead of It\=o's isometry.	\end{proof}

	We next prepare for the convergence proof of $Z^{m}$ along subsequences. It is crucial to deduce, a priori, that all limiting points do not spend time at zero in order to control the discontinuity of the sign-function at zero. 	Since we cannot deduce this property for the limiting points of $Z^{m}$ directly, we show it for $|Z^{m}|$ which is substantially simpler.
	
	\begin{lemma}\label{Lemma3}
		Suppose $Z$ denotes a limiting point of the tight sequence $(Z^{m})_{m\in\N}$ constructed in Lemma \ref{Lemma1}, then $|Z|$ is a weak solution to the SDE
		\begin{align}\label{H1}
			\begin{split}
			X_t&=z+\bigg(\Psi(1)+\int_{-1}^0 (|u|-1) V(du)\bigg)\,t+\sigma\int_0^t\sqrt{X_s}\,dB_s\\
			&\quad+\int_0^t \int_0^{\frac{1}{X_{s-}}}\int_{-1}^1X_{s-}(|u|-1)(\mathcal{N-N'})(ds,dr,du).
			\end{split}
		\end{align}
	\end{lemma}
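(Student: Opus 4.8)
The plan is to pass to the limit in the semimartingale decomposition \eqref{||} of $|Z^m|$ and to identify $|Z|$ as a solution of the martingale problem associated with \eqref{H1}. Write $X^m:=|Z^m|$, and let $X:=|Z|$ for a subsequential limit $Z$ of $(Z^m)$, which exists by Lemma \ref{Lemma4}; since $\omega\mapsto|\omega|$ is continuous on $\D$ for the Skorokhod topology, $X^m\Rightarrow X$ along the same subsequence. Arguing as in the converse direction of Proposition \ref{Proposition} (now for \eqref{H1}, which is structurally the SDE \eqref{SDELeifMatyas}), it suffices to show that for every $f\in C_c^\infty(\R)$ the process $f(X_t)-f(|z|)-\int_0^t\mathcal L f(X_s)\,ds$ is a martingale, where
\begin{align*}
	\mathcal L f(x)&=\Big(\Psi(1)+\int_{-1}^0(|u|-1)V(du)\Big)f'(x)+\frac{\sigma^2}{2}\,x f''(x)\\
	&\quad+\int_0^{\infty}\int_{-1}^1\1_{\{rx\le1\}}\big(f(|u|x)-f(x)-f'(x)\,x(|u|-1)\big)\,dr\,\bar\Pi(du)
\end{align*}
is the generator of \eqref{H1}. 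A small but crucial observation is that $f(|u|x)-f(x)-f'(x)x(|u|-1)=O\big(x^2(|u|-1)^2\big)$ uniformly in $u$, so the jump part of $\mathcal L f$ is $O(x)$ near $x=0$ and $\mathcal L f$ extends to a bounded continuous function on $[0,\infty)$.

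First I would apply It\^o's formula to \eqref{||} and check, exactly as in Proposition \ref{Proposition} and using the uniform moment bounds from the proof of Lemma \ref{Lemma4}, that $f(X^m_t)-f(|z|)-\int_0^t\mathcal A^m f(X^m_s)\,ds$ is a true martingale, where the generator $\mathcal A^m$ of \eqref{||} differs from $\mathcal L$ by three explicit terms: (i) the contribution of the $\mathcal M^m$-integral, a deterministic sequence tending to $0$; (ii) the jump-truncation corrections coming from replacing $\tfrac1x$ by $\tfrac1x\wedge m$ and $[-1,1]$ by $[-1,1-\tfrac1m]$, which, by the estimate above and $\int_{-1}^1(|u|-1)^2\bar\Pi(du)<\infty$, are bounded uniformly in $x\ge0$ by a null sequence; and (iii) the drift discrepancy $\big(\int_{-1}^0(|u|-u)V(du)\big)\big(\1_{\{x=0\}}+(1\wedge mx)-1\big)f'(x)$, which is bounded by $\|f'\|_\infty\int_{-1}^0(|u|-u)V(du)$ and supported on $\{0<x<\tfrac1m\}$. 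Terms (i) and (ii) contribute $o(1)$ to $\int_0^t\mathcal A^m f(X^m_s)\,ds$, while (iii) contributes at most a constant times $E\big[\int_0^t\1_{\{0<X^m_s<1/m\}}\,ds\big]$.

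The hard part will be to show that
\begin{align*}
	\lim_{\eps\downarrow0}\ \sup_{m\in\N}\ E\Big[\int_0^t\1_{\{0<X^m_s<\eps\}}\,ds\Big]=0,
\end{align*}
which kills the spurious drift (iii) and, used again afterwards, yields that $X$ does not spend time at zero. For $\sigma>0$ my approach is an occupation-time argument: the continuous martingale part of $X^m$ has quadratic variation $\sigma^2\int_0^\cdot X^m_s\,ds$, so the occupation-density formula gives $\sigma^2\int_0^t\1_{\{0<X^m_s<\eps\}}\,ds=\int_0^\eps a^{-1}L^a_t(X^m)\,da$ for the semimartingale local times $L^a_t(X^m)$; Tanaka's formula for $(X^m-a)^+$, together with the facts that the jumps of $X^m$ are downward, that its drift is bounded, that $L^0_t(X^m)=0$ (established in the proof of Lemma \ref{Lemma7}), and the uniform moment bounds, gives an estimate $E[L^a_t(X^m)]\le C\big(a+E[\int_0^t\1_{\{0<X^m_s\le a\}}\,ds]\big)$; inserting this into the occupation-density identity produces a closed Gronwall-type inequality in $\eps$ for $\sup_m E[\int_0^t\1_{\{0<X^m_s<\eps\}}\,ds]$ from which the claim follows. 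When $\sigma=0$ this argument is empty and the same conclusion has to be reached directly, using that the jump rate $\tfrac1x\wedge m$ near zero forces quick exit from small neighbourhoods of $0$. In all cases Condition \eqref{Condition} is exactly what makes this possible, since it ensures the net drift of $X^m$ near zero is strictly positive rather than trapping; this is the step I expect to require the most care.

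With this estimate in hand the passage to the limit is routine: $\int_0^t(\mathcal A^m f-\mathcal L f)(X^m_s)\,ds\to0$ in $L^1$; $\mathcal L f$ being bounded and continuous on $[0,\infty)$, one has $\int_0^t\mathcal L f(X^m_s)\,ds\to\int_0^t\mathcal L f(X_s)\,ds$ in distribution; and the uniform $L^2$-bound on $X^m_t$ together with the boundedness of $f$ lets one pass the martingale property, tested against bounded continuous functionals of finitely many coordinates, to the limit. Hence $f(X_t)-f(|z|)-\int_0^t\mathcal L f(X_s)\,ds$ is a martingale for every $f\in C_c^\infty(\R)$, i.e. $X=|Z|$ is a weak solution of \eqref{H1}.
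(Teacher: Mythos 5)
Your overall architecture is the same as the paper's: derive from the decomposition \eqref{||} of Lemma \ref{Lemma7} the martingale problem solved by $X^m=|Z^m|$, pass to the limit along the convergent subsequence, and identify the limiting generator as that of \eqref{H1}. Your handling of the $\mathcal M^m$-term (a deterministic $O(1/m)$ bound via second-order Taylor), of the jump truncations (domination by $x(|u|-1)^2$ together with $\int_{-1}^1(|u|-1)^2\,\bar\Pi(du)<\infty$), and the final transfer of the martingale property through weak convergence all coincide with the paper's Steps B1c), B1d), B2) and C) (the paper invokes Jacod--Shiryaev IX.1.19 for the last step). Where you depart is the drift: the paper's Step B1a) declares its convergence ``trivial'', which is immediate only at times $s$ with $Z_s(\omega)\neq 0$, whereas you correctly observe that the coefficient $\1_{\{x=0\}}+(1\wedge mx)$ differs from $1$ exactly on $(0,\tfrac1m)$, so that one must control the occupation of $(0,\tfrac1m)$ by $X^m$. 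Isolating this as the crux is a genuine point in your favour.

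However, the argument you propose for $\lim_{\eps\downarrow 0}\sup_m E[\int_0^t\1_{\{0<X^m_s<\eps\}}ds]=0$ does not close as written. Tanaka's formula applied to $(a-X^m)^+$ bounds $E[L^a_t(X^m)]$ by $2a$ plus twice the expected drift accumulated on $\{X^m_{s-}\le a\}$, and this set contains $\{X^m_{s-}=0\}$, where the finite-variation part of \eqref{||} has a nonzero rate. So the estimate you can actually extract is $E[L^a_t(X^m)]\le C\big(a+E[\int_0^t\1_{\{0<X^m_s\le a\}}ds]+E[\int_0^t\1_{\{X^m_s=0\}}ds]\big)$. The last term is independent of $a$ and strictly positive (after each visit to zero, $Z^m$ sits there for an $\mathrm{Exp}(m)$ waiting time), and since $\int_0^\eps a^{-1}da=\infty$ the occupation-density identity then fails to produce the closed Gronwall inequality in $\eps$ that you claim. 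To repair it you would need an independent a priori bound showing $E[\int_0^t\1_{\{X^m_s=0\}}ds]$ is small (at least $o(1/\log m)$), which amounts to bounding the number of pseudo-excursions completed by time $t$ by $o(m)$ --- this is where the fact that excursions started from height of order $1/m$ have duration with infinite mean (hence Condition \eqref{Condition}) would really enter, and you have not supplied it. In addition, the case $\sigma=0$ is explicitly deferred, and Condition \eqref{Condition} is not a hypothesis of the lemma as stated. So the step you yourself flag as the hard part is precisely the one that is missing.
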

	\begin{proof}
		Let us suppose that along the subsequence $m_k$ we have weak convergence of $Z^{m_k}$ to $Z$ and, due to the continuous mapping theorem, also weak convergence of $|Z^{m_k}|$ to $|Z|$. We first derive a martingale problem for 
		$|Z^{m}|, m\in\N,$ from which we then derive the claimed statement.
		\smallskip
		
		\textbf{Step A):} Proceeding exactly as in Step 1) of the proof of Proposition \ref{Proposition}, one derives from Lemma \ref{Lemma7} that $|Z^{m}|$ solves the martingale problem $(|\mathcal A^m|,\delta_{|z|})$ with
 		\begin{align*}
  			&\quad(|\mathcal A^{m}|f)(x)\\
  			&:=\bigg(\bigg(\Psi(1)+\int_{-1}^0 (u-1) V(du)\bigg)\\
			&\qquad+\bigg(\int_{-1}^0 (|u|-u) V(du)\bigg)\big(\1_{\{x=0\}}+(1\wedge m x)\big)\bigg)f'(x)\\
			&\quad+\frac{\sigma^2}{2}xf''(x)+\int_{\{\pm\frac 1 m\}} \1_{\{x=0\}} \bigg(f\bigg(\bigg(\Psi(1)+\int_{-1}^0 (|u|-1) V(du)\bigg)|u|\bigg)\\
			&\quad\quad\quad-f(0)-f'(0)\bigg(\Psi(1)+\int_{-1}^0 (|u|-1) V(du)\bigg)|u|\bigg) \Sigma(dv)\\
      	   	         &\quad + \int_0^{m} \int_{[-1,1-\frac{1}{m}]} \mathbf 1_{\{rx\leq 1\}}\Big(f (|u|x)-f(x) -f'(x)x(|u|-1)\Big) \,\dd r\,\bar\Pi(\dd u),\quad x\geq 0,
 		\end{align*}
		for $f\in C_c^\infty[0,\infty)$.
 		\smallskip

		\textbf{Step B):} Next, we show that the limit points $|Z|$ solve the martingale problem $(|\mathcal A|,\delta_{|z|})$, with 
 		\begin{align*}
  			(|\mathcal A| f)(x)
  			&:=\bigg(\Psi(1)+\int_{-1}^0 (|u|-1) V(du)\bigg)\,f'(x)+\frac{\sigma^2}{2}xf''(x)\\
      	   	         &\quad + \int_0^{\infty} \int_{[-1,1]}\mathbf 1_{\{rx\leq 1\}}\Big(f (|u|x)-f(x) -f'(x)x(|u|-1)\Big) \,\dd r\,\bar\Pi(\dd u),\quad x\geq 0,
 		\end{align*}
		for $f\in C_c^\infty[0,\infty)$. By Skorokhod's representation theorem we may assume that $ Z^{m_k}$ converges to $Z$ (resp. $|Z^{m_k}|$ to $|Z|$) almost surely in $\D$ (possibly on a different probability space and changing also the 
		subsequence $(m_k)_{k\in\N}$). By Proposition 3.5.2 of \cite{EthierKurtz}, the almost sure convergnece yields that $P(\bar\Omega)=1$, where
		\begin{align*}
    			\bar\Omega:=\left\{\omega\in\Omega : \lim_{k\to\infty} |Z^{m_k}_t(\omega)| = |Z_t(\omega)| \quad \text{for $t\geq 0$ at which $(Z_u(\omega))_{u\geq 0}$ is continuous}\right\}.
		\end{align*}
		In what follows we let $\omega\in\bar\Omega$ be fixed and show that
 		\begin{align}\label{help18}
   			\lim_{k\to\infty} \int_0^t (|\mathcal A^{m_k}|f)( |Z^{m_k}_s|)(\omega)\,\dd s= \int_0^t (|\mathcal A| f)( |Z_s|)(\omega)\,\dd s,\qquad t\geq 0.
 		\end{align}
		In Step B1) we verify the pointwise convergence $(|\mathcal A^{m_k}|f)( |Z^{m_k}_s|)(\omega)\stackrel{m_k\to\infty}{\longrightarrow} (|\mathcal A| f)( |Z_s|)(\omega)$ for $s\leq t$ fixed and in Step B2) we verify the convergence of \eqref{help18} via dominated convergence.\\
		 Let us introduce the notation
		\begin{align*}
   			D(\omega):=\big\{ t\geq 0 : \text{$(Z_u(\omega))_{u\geq 0}$ is continuous at $t$} \big\},
 		\end{align*}
		so that $\lim_{k\to\infty} |Z^{m_k}_t(\omega)| = |Z_t(\omega)|$ for all $t\in D(\omega)$ and furthermore $[0,\infty)\setminus D(\omega)$ is at most countable since $Z$ has c\`{a}dl\`{a}g paths.
		\smallskip

		\textbf{Step B1a):} The pointwise convergence for the drift part is trivial.
		\smallskip

		\textbf{Step B1b):} The pointwise convergence for the diffusive part is trivial.
		\smallskip
		
		\textbf{Step B1c):} For the integral with respect to $\Sigma$ we apply Taylor's formula to find
		\begin{align*}
			&\quad\int_{\{\pm\frac 1 m\}} \1_{\{Z^{m_k}_s=0\}} \bigg(f\bigg(\bigg(\Psi(1)+\int_{-1}^0 (|u|-1) V(du)\bigg)|v|\bigg)\\
			&\quad\quad\quad-f(0)-f'(0)\bigg(\Psi(1)+\int_{-1}^0 (|u|-1) V(du)\bigg)|v|\bigg) \Sigma(dv)\\
			&\leq \frac{1}{2}\bigg(\Psi(1)+\int_{-1}^0 (|u|-1) V(du)\bigg)^2\sup_z f''(z) \int_{\{-\frac{1}{m_k},\frac{1}{m_k}\}} |v|^2 \, \Sigma(dv)\\
			&=\frac{1}{2}\bigg(\Psi(1)+\int_{-1}^0 (|u|-1) V(du)\bigg)^2\frac{\sup_z f''(z)}{m_k},
		\end{align*}
		so that pointwise convergence to zero for $m_k\to\infty$ is verified.
		\smallskip
		
		\textbf{Step B1d):}
		For the integral with respect to $\bar\Pi$ we use that, for all $s\in D(\omega)$, the integrand
		\begin{align*}
 			(r,u)&\mapsto  \mathbf 1_{\{u\in [-1,1-\frac{1}{m}]\}} \mathbf 1_{\{r|Z^{m_k}_s(\omega)|\leq 1\}}\\
          	         \quad&\times \big[ f\big(|u||Z^{m_k}_s(\omega)|\big)
	         		-f(|Z^{m_k}_s(\omega)|) - f'(|Z^{m_k}_s(\omega)|)|Z^{m_k}_s(\omega)|(|u|-1)\big]
		\end{align*}
 		converges, as $k\to\infty$, pointwise to
 		\begin{align*}
 		(r,u)\mapsto	\1_{\{|u|\leq 1\}}\mathbf 1_{\{r|Z_s(\omega)|\leq 1\}} \big[  f(|u||Z_s(\omega)|)-f(|Z_s(\omega)|)-f'(|Z_s(\omega)|)|Z_s(\omega)|(|u|-1)\big].
 		\end{align*}
		
		\textbf{Step B2):} Since
 		\begin{align*}
    			\lim_{k\to\infty}(|\mathcal A^{m_k}|f)(| Z^{m_k}_s(\omega)|)=(|\mathcal A| f)( |Z_s(\omega)|),\quad \omega \in \bar\Omega,
		\end{align*}
		is verified for any $s\leq t$, it remains to justify the change of limit and integration in \eqref{help18}. For the first threee summands dominated convergence is clear (in Step B1c) the upper bound is independent of $s$) and we only need to deal with the integral with respect to $\bar\Pi$:\\
		By Taylor expansion of second order, we can derive the upper bound
		\begin{align*}
 			\begin{split}
  				&\quad\sup_{k\in\N}\mathbf 1_{u\in \{[-1,1-\frac{1}{m}]\}} \mathbf 1_{\{r |Z^{m_k}_s(\omega)|\leq 1\}}\\
				 &\qquad \Big\vert f(|u||Z^{m_k}_s(\omega)|)-f( |Z^{m_k}_s(\omega)|)\mathbf 1_{\{\vert u\vert\geq \eps_k\}}- f'( Z^{m_k}_s(\omega))|Z^{m_k}_s(\omega)|(|u|-1)\Big\vert\\
				&\leq \frac{1}{2}\1_{\{|u|\leq 1\}}\sup_z \vert f''(z)\vert\sup_{k\in\N} \mathbf 1_{\{r |Z^{m_k}_s(\omega)|\leq 1\}}|Z^{m_k}_s(\omega)|^2 (|u|-1)^2\\
				&\leq \frac{1}{2}\1_{\{|u|\leq 1\}}\sup_z \vert f''(z)\vert\sup_{k\in\N}  \left(Z^{m_k}_s(\omega)\wedge\frac{1}{r}\right)^2 (|u|-1)^2\\
				&\leq \frac{1}{2}\1_{\{|u|\leq 1\}}\sup_z \vert f''(z)\vert\sup_{k\in\N}  \left(\sup_{s\leq t}Z^{m_k}_s(\omega)\wedge\frac{1}{r}\right)^2 (|u|-1)^2.
			\end{split}
 		\end{align*}
		Note that for the final line we used that $x\mapsto \sup_{s\leq t}x_s$ is a continuous functional on the Skorokhod space so that the convergence of $Z^{m_k}$ implies $\sup_{k\in\N}\sup_{s\leq t}Z^{m_k}_s(\omega)=: C_t(\omega)<\infty$.
		Thus, the integral with respect to $\bar\Pi$ in $(\mathcal A^{m_k}f)(|Z^{m_k}_s(\omega)|)$ is bounded from above by
  		\begin{align*}
   			\begin{split}
    				&\quad \frac{1}{2}\sup_z\vert f''(z)| \int_0^\infty \left(C_t(\omega)\wedge \frac{1}{r}\right)^2  dr\int_{-1}^1(|u|-1)^2\,\bar\Pi(\dd u)\\
				&\leq C\sup_z \vert f''(z)|\bigg(C_t(\omega)^2+\int_1^\infty r^{-2}dr\bigg)\int_{-1}^1(|u|-1)^2\,\bar\Pi(\dd u),
  			\end{split}
 		\end{align*}
		which is finite and independent of $s$. Using dominated convergence theorem we have convergence for the third summand of $\mathcal A^{m_k} f$.
\smallskip

 		\textbf{Step C):} To conclude the proof let us write 
		\begin{align*}
			M^{m_k}_t&=f(Z^{m_k}_t)-\int_0^t \mathcal |\mathcal A^m|f(Z^{m_k}_s)\,ds,\quad t\geq 0,\\
			M_t&=f(Z_t)-\int_0^t \mathcal |\mathcal A|f(Z_s)\,ds,\quad t\geq 0,
		\end{align*}
		for which we know that the $M^{m_k}$ are martingales with respect to the filtrations generated by $Z^{m_k}$. The martingale property of $M$ with respect to its own filtration follows by Jacod and Shiryaev \cite[Corollary IX.1.19]{JS}. To check the conditions of this result we have to show that $M^{m_k}$ converges weakly in $\D$ as 
		$k\to\infty$ to $M$ and that there is some $b\geq 0$ such that $\vert \Delta M^{m_k}_t\vert\leq b$ for all $t>0$, $m\in\N$, almost surely. Using that $Z^{m_k}$ converges weakly in $\D$ to $Z$ as $k\to\infty$ and that $f$ is continuous
 		and bounded, we have $f(Z^{m_k})$ converges weakly in $\D$ to $f(Z)$ as $k\to\infty$. Since the integral in the definition of $M$ is continuous, by Jacod and Shiryaev \cite[Proposition VI.1.23]{JS}, we obtain that $M^{m_k}$ converges weakly 		in $\D$ as $k\to\infty$ to $M$. Further, almost surely for all $t\geq 0$,
 		\begin{align*}
   			\vert \Delta M^{m_k}_t \vert=\vert f(|Z^{m_k}_t|) - f(|Z^{m_k}_{t-}|)\vert\leq 2 \sup_z\vert f(z)\vert<\infty.
 		\end{align*}
	\end{proof}

\begin{cor}\label{Cor1}
	Suppose $Z$ denotes a limiting point of the tight sequence $(Z^{m})_{m\in\N}$ constructed in Lemma \ref{Lemma1}, then $Z$ almost surely does not spend time at zero.
\end{cor}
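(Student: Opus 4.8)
The plan is to reduce the claim to a statement about the absolute value $X:=|Z|$, for which Lemma \ref{Lemma3} has already identified a clean scalar SDE, and then to exploit that the drift coefficient of that SDE is strictly positive thanks to Condition \eqref{Condition}.

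Since $\1_{\{Z_s=0\}}=\1_{\{|Z_s|=0\}}$, it suffices to show that $X=|Z|$ does not spend time at zero. By Lemma \ref{Lemma3}, $X$ is a weak solution of the SDE \eqref{H1}, and writing $c:=\Psi(1)+\int_{-1}^0(|u|-1)\,V(du)$, Condition \eqref{Condition} gives $c>0$. Repeating the moment estimates from the first part of the proof of Proposition \ref{Proposition} verbatim (they used only the structure of the coefficients, which is the same here) one obtains the semimartingale decomposition $X_t=|z|+c\,t+N_t$, $t\geq 0$, with $N$ a true martingale whose continuous part satisfies $\dd[N^c]_s=\sigma^2 X_s\,\dd s$ and whose jumps are $\Delta N_s=X_{s-}(|u|-1)$; in particular $\Delta N_s=0$ whenever $X_{s-}=0$, and, since the jumps are downward and multiplicative, $X_{s-}=0$ forces $X_s=0$. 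Consequently $\int_0^t\1_{\{X_{s-}=0\}}\,\dd N_s=0$ (its continuous martingale part has vanishing quadratic variation and its jumps vanish on $\{X_{s-}=0\}$), so with $A_t:=\int_0^t\1_{\{X_s=0\}}\,\dd s$ we get $\int_0^t\1_{\{X_{s-}=0\}}\,\dd X_s=c\,A_t$.

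Next I would apply Tanaka's formula to $X=X^+$. Because $X\geq 0$ never changes sign and does not jump away from zero, every jump correction in Tanaka's formula vanishes identically, and — exactly as in the drift computation in the proof of Lemma \ref{Lemma7} — one is left with $L_t^0(X)=2c\,A_t$ for the semimartingale local time of $X$ at zero. It then remains to check that $L^0(X)\equiv 0$. If $\sigma=0$ this is automatic, since $X$ then has no continuous martingale part and hence vanishing local time. If $\sigma>0$, the occupation times formula applied with $g(x)=x^{-1}\1_{(0,\eps]}(x)$ yields
\[
\int_0^\eps a^{-1}L_t^a(X)\,\dd a=\sigma^2\int_0^t \1_{\{0<X_s\leq\eps\}}\,\dd s\leq \sigma^2 t<\infty
\]
for every $\eps>0$ and every $t$, which — together with the right-continuity of $a\mapsto L_t^a(X)$ at $0$ — forces $L_t^0(X)=0$, since $L_t^0(X)>0$ would make $a^{-1}L_t^a(X)$ non-integrable near $a=0$. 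In either case $A_t=0$ for all $t$, proving the corollary.

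The genuinely delicate work sits in Lemma \ref{Lemma3}, on which this argument rests; within the present proof the only points requiring care are the verification that $N$ is a true (not merely local) martingale and that Tanaka's formula applies even when $\bar\Pi$ is infinite (an accumulation of arbitrarily small sign-preserving jumps near $|u|=1$) — both handled by the estimates already used for Proposition \ref{Proposition} together with the fact that $X$ never crosses zero. The structural input that makes everything work is Condition \eqref{Condition}, that is $c>0$: it is precisely what produces the local-time identity $L^0(X)=2c\,A_t$ and, as anticipated in the discussion preceding Lemma \ref{Lemma7}, what rules out the degenerate limit point trapped at zero.
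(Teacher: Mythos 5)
Your reduction to $X=|Z|$ and to the SDE \eqref{H1} is the same as the paper's, but from there you take a genuinely different route. The paper deliberately avoids local-time arguments (it remarks that the jumps of $X$ need not be summable, so the identification $L^a_t=\lim_{\eps\to0}\eps^{-1}\int_0^t\1_{[a,a+\eps)}(X_s)\,d[X,X]^c_s$ used in Lemma \ref{Lemma7} is unavailable) and instead applies It\=o's formula twice, to $x\mapsto x^2$ and then to $x\mapsto\sqrt{x+\eps}$ with $\eps\to 0$, producing the alternative decomposition \eqref{H2} whose drift is $c\int_0^t\1_{\{X_s>0\}}\,ds$; comparing drifts with \eqref{H1} gives the claim. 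You instead combine Tanaka's formula with the occupation density formula. Your identity $\tfrac12 L^0_t(X)=c\,A_t$ (up to normalization) is correct: all jump corrections for $x\mapsto x^+$ vanish because jumps from zero are multiplicative and hence zero, and the compensated Poisson integral restricted to $\{X_{s-}=0\}$ vanishes. Your approach is shorter where it works, but it silently replaces the summable-jumps identification of local time by a different piece of machinery: the existence of a version of $(a,t)\mapsto L^a_t$ that is right-continuous in $a$ for a \emph{general} semimartingale (Bouleau--Yor; Protter, Thm.\ IV.76). That result does not require summable jumps, so your route is legitimate, but it is the load-bearing input and must be cited explicitly -- without it the occupation formula only controls $L^a_t$ for Lebesgue-a.e.\ $a$ and says nothing about the single level $a=0$.

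The one genuine soft spot is the case $\sigma=0$. You dismiss it with ``no continuous martingale part hence vanishing local time,'' but that is not a free-standing principle: your own identity $L^0_t=2c\,A_t$ shows that the local time of this quadratic-pure-jump-plus-drift semimartingale is exactly $2c$ times the quantity you are trying to prove is zero, so asserting $L^0_t=0$ outright is circular. What saves the case is the same regularity argument as for $\sigma>0$: the occupation formula gives $L^a_t=0$ for Lebesgue-a.e.\ $a$, and right-continuity of $a\mapsto L^a_t$ at $a=0$ then upgrades this to $L^0_t=0$. Please spell that out (or simply run the $\eps$-integral argument with the convention that an empty continuous quadratic variation makes the right-hand side zero). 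With these two repairs -- the citation of the spatial regularity of semimartingale local time and the corrected treatment of $\sigma=0$ -- your proof is complete and is an acceptable alternative to the double-It\=o argument in the paper.
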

\begin{proof}
	Utilizing Lemma \ref{Lemma3} it is enough to show that any non-negative weak solution to the SDE \eqref{H1} does not spend time at zero.
	Without further assumptions on the jump measure $\bar\Pi$ the jumps of a solution $X$ to the SDE \eqref{H1} are not summable, thus, we cannot directly resort to a simple local time argument based on the occupation time formula (compare for instance Section IV.6 of \cite{Protter}). Instead, we use an It\=o formula argument that was used in a more specific situation in \cite{BDMZ}.\\ The argument is based on the trivial fact $\sqrt{X^2_t}=X_t$ and a double use of It\=o's formula, once applied to a smooth function and once to a singular function. The singular use gives an additional term from which the claim follows. Here is the simple direction applying It\=o's formula to the $C^2([0,\infty))$-function $f(x)=x^2$:
	\begin{align*}
		X_t^2
		&=z^2+\bigg(2\bigg(\Psi(1)+\int_{-1}^0 (|u|-1) V(du)\bigg)+\sigma^2+\int_{-1}^1(u^2-2|u|+1)\,\bar\Pi(du)\bigg) \int_0^t  X_s \,ds\\
		&\quad+2\sigma\int_0^t  X_s^{3/2}\,dB_s +\int_0^t \int_0^{\frac{1}{X_{s-}}}\int_{-1}^1X_{s-}^2(u^2-1)\,\mathcal{(N-N')}(ds,dr,du),\quad t\geq 0.
	\end{align*}
	Next, we proceed with the inverse direction. Suppose we could apply It\=o's formula with $f(x)=\sqrt{x}$ and the convention $\frac{0}{0}=0$ to the semimartingale decomposition derived for $X_t^2$, then 
	\begin{align}\label{H2}\begin{split}
		X_t&=z+\bigg(\Psi(1)+\int_{-1}^0 (|u|-1) V(du)\bigg)\int_0^t \1_{\{X_s> 0\}}\,ds+\sigma\int_0^t \sqrt{X_s}\,dB_s\\
		&\quad+\int_0^t \int_0^{\frac{1}{X_{s-}}}\int_{-1}^1X_{s-}(|u|-1)(\mathcal{N-N'})(ds,dr,du),\quad t\geq 0,
	\end{split}
	\end{align}
	so that comparing the drifts of \eqref{H2} and \eqref{H1} implies the claim. To verify Equation \eqref{H2} rigorously, we approximate $f(x)=\sqrt{x}$ on $[0,\infty)$ by the $C^2([0,\infty))$-functions $f^\eps(x)=\sqrt{x+\eps}$. Applying It\=o's formula to the semimartingale decomposition derived for $X_t^2$ gives
	\begin{align*}
		&\quad \sqrt{X_t^2+\eps}\\
		&=\sqrt{z^2+\eps}+\bigg(2\bigg(\Psi(1)+\int_{-1}^0 (|u|-1) V(du)\bigg)+\sigma^2+\int_{-1}^1(u^2-2|u|+1)\,\bar\Pi(du)\bigg)\frac{1}{2} \int_0^t ( X_s^2+\eps)^{-\frac{1}{2}}X_s \,ds\\
		&\quad+\sigma\int_0^t ( X_s^2+\eps)^{-\frac{1}{2}} X_s^{3/2}\,dB_s-\frac{\sigma^2}{2}\int_0^t ( X_s^2+\eps)^{-\frac{3}{2}} X_s^3\,ds\\
		&\quad+\int_0^t \int_0^{\frac{1}{X_{s-}}}\int_{-1}^1\Big(\sqrt{X_{s-}^2u^2+\eps}-\sqrt{X_{s-}^2+\eps}\Big)\,\mathcal{(N-N')}(ds,dr,du)\\
		&\quad+\int_0^t \int_0^{\frac{1}{X_s}}\int_{-1}^1\Big(\sqrt{X_s^2u^2+\eps}-\sqrt{X_s^2+\eps}-\frac{1}{2}(X_s^2+\eps)^{-1/2}X_s^2(u^2-1)\Big)\,ds\,dr\,\bar\Pi(du)\\
		&=:\sqrt{z^2+\eps}+I^{1,\eps}_t+I^{2,\eps}_t+I^{3,\eps}_t+I^{4,\eps}_t+I^{5,\eps}_t.
	\end{align*}
	Since the left-hand side converges to $X_t$ almost surely, it suffices to find a subsequence $\eps_k$ along which the summands $I^{1,\eps_k}_t,...,I^{5,\eps_k}_t$ converge almost surely to the summands of \eqref{H2}.
	\smallskip
	
	For the drift we directly obtain the almost sure convergence
	\begin{align*}
		I^{1,\eps}_t
		\stackrel{\eps\to 0}{\longrightarrow}\bigg(\bigg(\Psi(1)+\int_{-1}^0 (|u|-1) V(du)\bigg)+\frac{\sigma^2}{2}+\frac{1}{2}\int_{-1}^1(u^2-2|u|+1)\,\bar\Pi(du)\bigg)  \int_0^t \1_{\{X_s> 0\}}\,ds
	\end{align*}
	by dominated convergence. To show convergence of $I^{2,\eps}$ we first use It\=o's isometry to obtain
	\begin{align}\label{yi}
		E\bigg[\bigg(I_t^{2,\eps}-\sigma\int_0^t \sqrt{X_s}\,dB_s\bigg)^2\bigg]&= \sigma^2E\bigg[\int_0^t\big(( X_s^2+\eps)^{-\frac{1}{2}}X_s^{3/2}-X_s^{1/2}\big)^2\,ds\bigg].
	\end{align}	
 If we define $g_\eps(x)= \big(( x^2+\eps)^{-\frac{1}{2}}x^{3/2}-x^{1/2}\big)^2$, then 
 \begin{align*}
 	\frac{\partial }{\partial \eps }g_\eps(x)=- \big(( x^2+\eps)^{-\frac{1}{2}}x^{3/2}-x^{1/2}\big)(x^2+\eps)^{-3/2} x^{3/2}\geq  0,\quad x\geq 0.
 \end{align*}
 Since $g_\eps(x)$ converges pointwise to zero as $\eps$ tends to zero,  the righthand side of \eqref{yi} converges to zero by monotone convergence so that $I_t^{2,\eps}$ converges to $\sigma\int_0^t \sqrt{X_s}\,dB_s$ in $L^2$. The almost sure convergence 
		\begin{align*}
		I_t^{3,\eps}\stackrel{\eps\to 0}{\longrightarrow}-\frac{\sigma^2}{2}\int_0^t \1_{\{X_s> 0\}}\,ds
	\end{align*}
	is proved as in Step 1) and cancels in the limit the second summand of $I^{1,\eps}$. To show convergence of  $I_t^{4,\eps}$ we use the It\=o isometry for Poissonian integrals to find
	\begin{align*}
		&\quad E\bigg[\bigg(I_t^{4,\eps}-\int_0^t \int_0^{\frac{1}{X_{s-}}}\int_{-1}^1X_{s-}(|u|-1)(\mathcal{N-N'})(ds,dr,du)\bigg)^2\bigg]\\
		&=\E\bigg[\int_0^t\ \int_{-1}^1 \frac{1}{X_s}\Big(\sqrt{X_{s}^2u^2+\eps}-\sqrt{X_{s}^2+\eps}-X_s(|u|-1)\Big)^2\,ds\,\bar\Pi(du)\bigg].
	\end{align*}
	With $h_\eps(x)= \frac{1}{x}\big(\sqrt{x^2u^2+\eps}-\sqrt{x^2+\eps}-x(|u|-1)\big)^2$ we claim that
	\begin{align*}
		\frac{\partial}{\partial \eps}h_\eps(x)&=-\frac{1}{x}\big(\sqrt{x^2u^2+\eps}-\sqrt{x^2+\eps}-x(|u|-1)\big)\big((x^2u^2+\eps)^{-1/2}-(x^2+\eps)^{-1/2}\big)\leq 0
	\end{align*}
	for all $x\geq 0$ and $|u|\leq 1$. To see the claim note that the second bracket is clearly positive so that it suffices to show that the first bracket is positive. For $\eps=0$ the first bracket is zero and it is easy to see that the first derivative in $\eps$ is positive. 	Hence, the $L^2$-convergence of $I_t^{4,\eps}$ to $\int_0^t \int_0^{\frac{1}{X_{s-}}}\int_{-1}^1X_{s-}(u-1)(\mathcal{N-N'})(ds,dr,du)$ follows again from monotone convergence. 
	\smallskip
	
	Finally, if we rewrite $I^{5,\eps}_t$ as
	\begin{align*}
		I^{5,\eps}_t&=\int_0^t \int_{-1}^1\frac{1}{X_s}\Big(\sqrt{X_s^2u^2+\eps}-\sqrt{X_s^2+\eps}\Big)\,ds\,\bar\Pi(du)\\
		&\quad-\int_0^t\int_{-1}^1\frac{1}{2}(X_s^2+\eps)^{-1/2}X_s(u^2-1)\,ds\,\bar\Pi(du),
	\end{align*}
	then the first summand converges by monotone convergence shown as above and the second summand by dominated convergence:
	\begin{align*}
		I_t^{5,\eps}\stackrel{\eps\to 0}{\longrightarrow}\int_{-1}^1\Big(|u|-1-\frac{1}{2}(u^2-1)\Big)\bar\Pi(du)\int_0^t \1_{\{X_s> 0\}}\,ds,\quad a.s.,
	\end{align*}
	so that the third summand of $I^{1,\eps}$ is cancelled in the limit.
	\smallskip
	
	Choosing a common subsequence $\eps_k$ such that all terms converge almost surely the proof can be completed.
\end{proof}

With the previous lemma we know that under condition \eqref{Condition} solutions do not become trivial (trapped at zero). Additionally we can circumvent two major problems of the construction: First, the problem of the discontinuity of the sign-function at zero is resolved. Secondly, our redefinition of $\sign_{(0)}$ in the approximating equations to ensure symmetry of the approximating sequence does not pose any problem since it is not seen by the limiting process $Z$.

\begin{lemma}\label{existence}
		Suppose $Z$ denotes a limiting point of the tight sequence $(Z^{m})_{m\in\N}$ constructed in Lemma \ref{Lemma1}, then $Z$ is a weak solution to \eqref{Equation}.
\end{lemma}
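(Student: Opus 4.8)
The plan is to follow the scheme of Lemma~\ref{Lemma3}, passing to the limit in the martingale problems for the full processes $Z^{m}$ rather than for their absolute values. By Proposition~\ref{Proposition} it suffices to show that $Z$ solves the martingale problem $(\cA,\delta_z)$ for the generator $\cA$ of \eqref{Generator}. Exactly as in the first step of the proof of Proposition~\ref{Proposition}, It\=o's formula applied to $f(Z^{m})$ for $f\in C_c^\infty(\R)$, together with the second-moment bounds established there and in Lemma~\ref{Lemma4}, shows that $Z^{m}$ solves the martingale problem $(\cA^{m},\delta_z)$ with
\begin{align*}
(\cA^{m}f)(x)&=\Big(\Psi(1)+\int_{-1}^0(u-1)V(du)\Big)\sign_{(0)}(x)f'(x)+\frac{\sigma^2}{2}|x|f''(x)\\
&\quad+\int_{\{\pm\frac1m\}}\1_{\{x=0\}}\big(f(cv)-f(0)\big)\,\Sigma(dv)\\
&\quad+\int_0^{m}\int_{[-1,1-\frac1m]}\1_{\{r|x|\leq1\}}\big(f(ux)-f(x)-x(u-1)f'(x)\big)\,dr\,\bar\Pi(du),
\end{align*}
where $c=\Psi(1)+\int_{-1}^0(|u|-1)V(du)$ is the positive constant appearing in Condition~\eqref{Condition}. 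Fix a subsequence $(m_k)$ with $Z^{m_k}\Rightarrow Z$ in $\D$; by Skorokhod's representation theorem we may assume this holds almost surely on a common probability space, and, as in Lemma~\ref{Lemma3}, that $P(\bar\Omega)=1$ where $\bar\Omega$ is the event that $Z^{m_k}_t\to Z_t$ at every continuity point $t$ of $Z$.

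The core step is to prove, for each $t\geq 0$ and $\omega\in\bar\Omega$, that $\int_0^t(\cA^{m_k}f)(Z^{m_k}_s(\omega))\,ds\to\int_0^t(\cA f)(Z_s(\omega))\,ds$. First I would establish the pointwise convergence $(\cA^{m_k}f)(Z^{m_k}_s(\omega))\to(\cA f)(Z_s(\omega))$ for Lebesgue-a.e.\ $s$. The diffusive term is immediate, and the $\Sigma$-integral is bounded by $\tfrac12 c^2\sup_z|f''(z)|\,m_k^{-1}$ by Taylor's formula (as in Step~B1c of Lemma~\ref{Lemma3}), hence vanishes. For the $\bar\Pi$-integral one argues as in Step~B1d of Lemma~\ref{Lemma3}: at a continuity point $s$ with $Z_s(\omega)\neq0$ one has $Z^{m_k}_s(\omega)\to Z_s(\omega)$ and $1/|Z^{m_k}_s(\omega)|<m_k$ eventually, so the truncations in $r$ and in $u$ become irrelevant, and since $\bar\Pi(\{1\})=0$ and $\int_{-1}^1(u-1)^2\,\bar\Pi(du)<\infty$ dominated convergence in $(r,u)$ yields the limit. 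The delicate term is the drift: one needs $\sign_{(0)}(Z^{m_k}_s(\omega))f'(Z^{m_k}_s(\omega))\to\sign(Z_s(\omega))f'(Z_s(\omega))$, which holds whenever $Z_s(\omega)\neq0$, since then $Z^{m_k}_s(\omega)$ is eventually nonzero with the sign of $Z_s(\omega)$ and $\sign_{(0)}$ agrees with $\sign$ off the origin. By Corollary~\ref{Cor1} the set $\{s\leq t:Z_s(\omega)=0\}$ is Lebesgue-null for a.e.\ $\omega$, so the pointwise convergence holds for a.e.\ $s$, which is enough. The interchange of limit and integral is then justified by dominated convergence as in Step~B2 of Lemma~\ref{Lemma3}: the drift is bounded by $C\sup_z|f'(z)|$; the $\Sigma$-term by $Cm_k^{-1}$; the diffusive term by $\tfrac{\sigma^2}{2}\sup_z|f''(z)|\,C_t(\omega)$ with $C_t(\omega):=\sup_k\sup_{s\leq t}|Z^{m_k}_s(\omega)|<\infty$; and the $\bar\Pi$-term, via the second-order Taylor bound and $\1_{\{r|x|\leq1\}}|x|^2\leq(|x|\wedge r^{-1})^2$, by $C\sup_z|f''(z)|\,(C_t(\omega)^2+1)\int_{-1}^1(u-1)^2\,\bar\Pi(du)$, all uniform in $s\leq t$ and integrable over $[0,t]$. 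Since the resulting bound on $|(\cA^{m_k}f)(Z^{m_k}_s)-(\cA f)(Z_s)|$ is integrable on $[0,T]$ uniformly in $k$, the convergence of the integrals is in fact uniform on $[0,T]$.

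To conclude, set $M^{m_k}_t=f(Z^{m_k}_t)-\int_0^t(\cA^{m_k}f)(Z^{m_k}_s)\,ds$ and $M_t=f(Z_t)-\int_0^t(\cA f)(Z_s)\,ds$; the $M^{m_k}$ are martingales in the filtration generated by $Z^{m_k}$. Since $f$ is bounded and continuous, $f(Z^{m_k})\to f(Z)$ in $\D$; combined with the uniform convergence of the (continuous) integral terms to a continuous limit and the continuity of addition on $\D$ used in Jacod and Shiryaev~\cite[Proposition~VI.1.23]{JS}, this gives $M^{m_k}\to M$ in $\D$; moreover $|\Delta M^{m_k}_t|\leq2\sup_z|f(z)|$ for all $t,k$. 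Hence Jacod and Shiryaev~\cite[Corollary~IX.1.19]{JS} shows that $M$ is a martingale in its own filtration, exactly as in Step~C of Lemma~\ref{Lemma3}; together with $Z_0=z$ this says that $Z$ solves the martingale problem $(\cA,\delta_z)$, and Proposition~\ref{Proposition} then identifies $Z$ as a weak solution of \eqref{Equation}. The main obstacle is the one isolated above: the discontinuity of $\sign_{(0)}$ (and of $\sign$) at the origin breaks the convergence of the drift term along the zero set of $Z$, and only Corollary~\ref{Cor1} — hence Condition~\eqref{Condition} — ensures that this set is Lebesgue-null and therefore does not affect the limiting martingale problem.
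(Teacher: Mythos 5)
Your proposal is correct and follows essentially the same route as the paper: reduce to the martingale problem via Proposition \ref{Proposition}, pass to the limit in the approximating generators exactly as in Lemma \ref{Lemma3}, and invoke Corollary \ref{Cor1} to neutralize the discontinuity of the sign-function in the drift, which is precisely the one point the paper singles out as new. Your $\Sigma$-term in $\cA^{m}$ omits the first-order compensator $-f'(0)cv$ that the paper writes, but that term integrates to zero by the symmetry of $\Sigma$, so the two generators coincide and your second-order Taylor bound remains valid.
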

\begin{proof}
	Taking into account Proposition \ref{Proposition} it suffices to show that any weak limiting point $Z$ of the tight sequence $(Z^m)_{m\in\N}$ constructed in Lemma \ref{Lemma1} satisfies the martingale problem $(\mathcal A,\delta_z)$. The proof follows along the same lines as the proof of Lemma \ref{Lemma3} changing the state-space from $[0,\infty)$ to $\R$ and the generators to
	 	\begin{align*}
  			&\quad(\mathcal A^{m}f)(x)\\
  			&:=\bigg(\Psi(1)+\int_{-1}^0 (u-1) V(du)\bigg)\sign_{(0)}(x)f'(x)
			+\frac{\sigma^2}{2}|x|f''(x)\\
			&\quad+\int_{\{\pm \frac{1}{m}\}} \1_{\{x=0\}} \bigg[f\bigg(\bigg(\Psi(1)+\int_{-1}^0 (|u|-1) V(du)\bigg)v\bigg)\\
			&\quad\quad\quad -f(0)-f'(0)\bigg(\Psi(1)+\int_{-1}^0 (|u|-1) V(du)\bigg)v\bigg] \Sigma(dv)\\
      	   	         &\quad + \int_0^{m} \int_{[-1,1-\frac{1}{m}]} \mathbf 1_{\{r|x|\leq 1\}}\Big(f (ux)-f(x) -f'(x)x(u-1)\Big) \,\dd r\,\bar\Pi(\dd u),\quad f\in C_c^\infty((-\infty,\infty)),
 		\end{align*}
		 and
	 	\begin{align*}
  			&\quad(\mathcal Af)(x)\\
  			&:=\bigg(\Psi(1)+\int_{-1}^0 (u-1) V(du)\bigg)\sign(x)f'(x)+\frac{\sigma^2}{2}|x|f''(x)\\
      	   	         &\quad + \int_0^{\infty} \int_{[-1,1]}\mathbf 1_{\{r|x|\leq 1\}}\Big(f (ux)-f(x) -f'(x)x(u-1)\Big) \,\dd r\,\bar\Pi(\dd u),\quad f\in C_c^\infty((-\infty,\infty)).
 		\end{align*}
		Comparing with the proof of Lemma \ref{Lemma3}, the ony difference occurs in Step B1a) because the pointwise convergence for the drift coefficients fails since (i) the sign-function is defined differently for the approximating 		martingale problem and the limit martingale problem and (ii) both sign-functions are discontinuous. Both problems are avoidable via Corollary \ref{Cor1} applied for the final step of
		\begin{align*}
			\lim_{m\to\infty}		\int_0^t\sign_{(0)}\big(Z^{m}_s(\omega)\big) \,ds
			&=\lim_{m\to\infty}	\int_0^t\sign_{(0)}\big(Z^{m}_s(\omega)\big)  \1_{\{Z_s(\omega)\neq 0\}}\,ds\\
			&=\int_0^t \sign_{(0)}\big(Z_s(\omega)\big)  \1_{\{Z_s(\omega)\neq 0\}}\,ds\\
			&=\int_0^t\sign\big(Z_s(\omega)\big)\1_{\{Z_s(\omega)\neq 0\}} \,ds\\
			&=\int_0^t\sign\big(Z_s(\omega)\big) \,ds.
		\end{align*}
		For the pointwise convergence of the integrands we used the continuity of the sign-function away from zero and dominated convergence to interchange limits and integration. 
	The proof is now complete.
\end{proof}
Now that we have constructed processes $Z^z$ started from $z\in\R$ that are weak solutions to \eqref{Equation} and symmetric by construction, we need to show that the family $(Z^z)_{z\in\R}$ is 
\begin{itemize}
	\item[(i)] Markovian,
	\item[(ii)] self-similar.
\end{itemize}
Both statements are derived from a weak uniqueness statement for \eqref{Equation} for which we had to impose Assumption \textbf{(A)}.\\
For initial condition zero, we derive moment equations for \eqref{Equation} from which, due to Assumption \textbf{(A)}, the well-posedness of the moment problem for one-dimensional marginals of symmetric solutions can be deduced. For initial conditions different from zero pathwise uniqueness before hitting zero holds. Combining the two uniqueness statements, uniqueness for one-dimensional marginals of solutions issued from the same initial condition follows. The Markov property is then a consequence of martingale problem theory and the self-similarity can be deduced from the self-similar structure of the coefficients in \eqref{Equation}.
\begin{proposition}\label{Markov}
		Denote by $Z$ a limiting point of the tight sequence $(Z^{m})_{m\in\N}$ with initial conditions $Z^m_0=z$ constructed in Lemma \ref{Lemma1}, then $Z$ is Markovian.
\end{proposition}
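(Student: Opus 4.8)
By Lemma~\ref{existence} and Proposition~\ref{Proposition}, the limit point $Z$ is a weak solution of \eqref{Equation} issued from $z$, equivalently a solution of the martingale problem $(\mathcal A,\delta_z)$; moreover $Z$ is symmetric (each $Z^m$ is symmetric, and symmetry is preserved under passage to the weak limit) and, by Corollary~\ref{Cor1}, does not spend time at zero. The plan is to prove that inside the class $\mathcal S_z$ of symmetric solutions of $(\mathcal A,\delta_z)$ that do not spend time at zero the one-dimensional marginal distributions are uniquely determined by the quintuple $(a,\sigma^2,\Pi,q,V)$, and then to upgrade this, by standard martingale-problem theory (as in \cite[Theorem~4.4.2]{EthierKurtz}), to uniqueness of the finite-dimensional distributions and hence to the Markov property of $Z$.

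For $z=0$ I would work with the absolute value. By the argument of Lemma~\ref{Lemma3}, $|Z|$ is, for all $t\ge 0$, a weak solution of \eqref{H1}; after the substitution $v=|u|$ and under Condition~\eqref{Condition}, this is an SDE of the kind treated in \cite{DB} for a positive self-similar Markov process leaving zero continuously, and Assumption~\textbf{(A)} guarantees that all its jumps decrease $|Z|$. Feeding test functions approximating $x\mapsto|x|^\lambda$ into the generator and using the moment equations of Bertoin and Yor \cite{BY2}, one finds that every moment $E_0\big[|Z_t|^\lambda\big]$ (for $\lambda$ positive, and also for $\lambda$ sufficiently negative) is determined by the quintuple, and Assumption~\textbf{(A)} forces these moments to grow slowly enough that the moment problem for $|Z_t|$ is determinate. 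Hence the law of $|Z_t|$ is unique; since symmetry under $P^0$ gives $Z_t\stackrel{\mathcal L}{=}-Z_t$, the law of $Z_t$ is the symmetrization of that of $|Z_t|$ and is therefore unique as well.

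For $z\neq 0$ I would combine this with pathwise uniqueness away from zero. On $[0,T_0)$ the coefficients of \eqref{Equation} are locally Lipschitz, so pathwise uniqueness holds there and the joint law of $\big((Z_{t\wedge T_0})_{t\ge 0},T_0\big)$ is determined. Conditioning on $\mathcal F_{T_0}$, on $\{T_0<\infty\}$ the shifted process $(Z_{T_0+t})_{t\ge 0}$ is again a solution of $(\mathcal A,\delta_0)$, it does not spend time at zero, and --- using the symmetry of the coefficients of \eqref{Equation} together with the symmetry of the constructed family --- it belongs to $\mathcal S_0$; its one-dimensional marginals are therefore governed by the $z=0$ analysis. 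Concatenating, the one-dimensional marginals of $Z$ under $P^z$ are uniquely determined for every $z\in\R$, and \cite[Theorem~4.4.2]{EthierKurtz} then yields uniqueness of all finite-dimensional marginals and the Markov property, so that $Z$ is Markovian.

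The main obstacle will be the step $z\neq 0$: one must verify that, conditionally on $\mathcal F_{T_0}$, the process genuinely restarts from zero as a \emph{symmetric} solution not spending time at zero, so that the $z=0$ uniqueness applies --- this is precisely where the symmetry built into the approximating SDE~\eqref{EquationApprox} and Corollary~\ref{Cor1} enter. A secondary point requiring care is showing, for $z=0$, that Assumption~\textbf{(A)} makes the relevant moment problem determinate, so that the moment recursions really pin down the law of $|Z_t|$ rather than only finitely many of its moments.
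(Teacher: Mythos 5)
Your proposal is correct and follows essentially the same route as the paper: pathwise uniqueness of \eqref{Equation} away from zero, restarting at $T_0$ via the shifted noises, uniqueness of one-dimensional marginals for symmetric solutions issued from zero via moment determinacy of $|Z_t|$ (which is where Assumption \textbf{(A)} and the negativity of the jumps of the absolute value enter, the paper invoking the moment result of \cite{BD}), and the upgrade to the Markov property by the martingale-problem argument of \cite[Theorem 4.4.2]{EthierKurtz}. The two "obstacles" you flag are exactly the points the paper addresses in its Steps 1c)--1d), using that the solution is strong up to $T_0$ so that the shifted noises are again a Brownian motion and a Poisson point process, and that the symmetry condition for the post-$T_0$ process is inherited from the symmetric approximations $Z^m$.
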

\begin{proof}
	Since we showed that $Z$ is a weak solution to the SDE \eqref{Equation} we start with a weak uniqueness statement for solutions to the SDE \eqref{Equation} that satisfies the symmetry property
\begin{align}\label{sym2}
	P(X_{T_0+t}\in A)=P(X_{T_0+t}\in -A),\quad t\geq 0, A\in \mathcal B(\R),
\end{align}
and do not spend time at zero. Both properties hold for $Z$: the first by construction as weak limit of the symmetric $Z^m$ defined by \eqref{EquationApprox}, the latter by Corollary \ref{Cor1}.
	\smallskip
	
	\textbf{Step 1a):} Let us first deduce the almost sure dichotomy $T_0<\infty$ or $T_0=\infty$ for the first hitting time of zero. A singular application of It\=o's formula as in the proof of Corollary \ref{Cor1} yields the 		semimartingale decomposition
		\begin{align}\label{absv}\begin{split}
			|X_t|			&=|z|+\bigg(\Psi(1)+\int_{-1}^0 (|u|-1) V(du)\bigg)\,t\\
			&\quad+\sigma\int_0^t \sign(X_s)\sqrt{|X_s|}\,dB_s+\int_0^t\int_0^{\frac{1}{|X_{s-}|}}\int_{-1}^1 |X_{s-}|(|u|-1)(\mathcal{N-N'})(ds,dr,du)\end{split}
		\end{align}
		for $t\geq 0$. Replacing $B$ by the Brownian motion $\bar B_t=\int_0^t \sign(X_s)dB_s$, we find that \eqref{absv} coincides with \eqref{SDELeifMatyas} so that $|X|$ is a positive self-similar Markov process. Since the first hitting times of $X$ and $|X|$ coincide, the claimed dichotomy follows from Lamperti's dichotomy (Section 3 of \cite{L}) for positive self-similar Markov processes.
\smallskip

\textbf{Step 1b):} Pathwise uniqueness holds for \eqref{Equation} up to first hitting zero: Suppose that $X^1, X^2$ are two solutions driven by the same noises $B$ and $\mathcal N$ and set $T_{\frac 1 n}=\inf\{ t\geq 0: |X^1_t|\leq \frac 1 n\text{ or }| X^2_t|\leq \frac 1 n\}$ for $\frac{1}{n}<|X^i_0|$. Then,
		\begin{align*}
			P(X^1_t=X^2_t\text{ for all }t<T_{\frac 1 n})=1
		\end{align*}
		since all integrands are locally Lipschitz continuous away from zero. Letting $n$ tend to infinity and using the right-continuity of solutions we find that
		\begin{align*}
			P(X^1_t=X^2_t\text{ for all }t<T_{0})=1
		\end{align*}
		and in particular that 
		\begin{align*}
			T_0=\inf\{ t \geq 0 : X^1_t=0\}=\inf\{ t \geq 0 : X^2_t=0\}.
		\end{align*}
		In what follows we assume $T_0<\infty$ almost surely; otherwise, we can directly proceed with Step 2) since pathwise uniqueness implies uniqueness in law.
		\smallskip
		
\textbf{Step 1c):} The pathwise uniqueness implies that solutions to \eqref{Equation} are strong up to $T_0$ and, consequently, $T_0$ is a stopping time for $B$ and $\mathcal N$. We denote by $\tilde B$ and $\tilde N$ the noises shifted by $T_0$. Due to the strong Markov property of the Brownian motion and the Poisson point process $\tilde B$ is a Brownian motion and $\tilde{\mathcal N}$ a Poisson point process with same intensity as $\mathcal N$. Furthermore, we define the shifted process $(\tilde X_t=X_{T_0+t})_{t\geq 0}$ that satisfies weakly the SDE
		\begin{align}\label{dd}
		\begin{split}
			\tilde X_t&=\bigg(\Psi(1)+\int_{-1}^0 (u-1) V(du)\bigg)\int_{0}^{t}\sign(\tilde X_{s})\,ds+\sigma\int_{0}^{t} \sqrt{|\tilde X_{s}|}\,d\tilde B_s\\
			&\quad+\int_0^{t} \int_0^{\frac{1}{|\tilde X_{s-}|}}\int_{-1}^1 \tilde X_{s-}(u-1)(\tilde {\mathcal N}-\tilde{\mathcal{N}}')(ds,dr,du) ,\quad t\geq 0.
		\end{split}
		\end{align}		
		In other words, $\tilde X$ is a weak solution to the SDE \eqref{Equation} with respect to the noises $\tilde B$ and $\tilde{\mathcal N}$ issued from zero. Furthermore, the symmetry condition \eqref{sym2} implies the symmetry condition
\begin{align}\label{symc}
	P(\tilde X_{t}\in A)=P(\tilde X_{t}\in -A),\quad t\geq 0, A\in \mathcal B(\R),
\end{align}
and clearly $\tilde X$ does not spend time at zero since $X$ does not.
\smallskip

\textbf{Step 1d):} Next, we need that all solutions to the SDE \eqref{dd} that do not spend time at zero and satisfy the symmetry condition \eqref{symc} have the same one-dimensional marginals. The symmetry assumption implies that the one-dimensional laws $\tilde X_t$ are uniquely determined by $|\tilde X_t|$. Since $|\tilde X_t|$ satisfies the SDE \eqref{absv} with the noises $\tilde B$ and $\tilde{\mathcal N}$ it suffices to show that the moment problem for \eqref{absv} is well-posed; this follows from the main result of \cite{BD} since the jumps of
\begin{align*}
t\mapsto \int_0^t\int_0^{\frac{1}{|\tilde X_{s-}|}}\int_{-1}^1 |\tilde X_{s-}|(|u|-1)(\mathcal{\tilde N-\tilde{N'}})(ds,dr,du)
\end{align*}
 are negative (this is the reason for our Assumption \textbf{(A)}). It was shown in \cite{BD} that the $k$th moments equal $C_k t^k$ for some constants $C_k$ that decrease sufficiently fast so that the moment problem is well-posed. Furthermore, the $C_k$ only depend on $(a,\sigma^2,\Pi,q,V)$ but not on the solution, thus, well-posedness of the moment problem implies the uniqueness of one-dimensional marginals.

\smallskip

\textbf{Step 1e):}
	Let us now suppose $X^1$ and $X^2$ are two weak solutions to \eqref{Equation} that do not spend time at zero and both satisfy the symmetry condition \eqref{sym2}. We split according to
		\begin{align*}
			P(X_t^i\in A)=P(X_t^i\in A\, ,\, t\leq T_0^i)+P(X_t^i\in A\,,\,t>T_0^i)
		\end{align*}
	and show that 
	\begin{align}
		P(X_t^1\in A\, ,\, t\leq T_0^1)&=P(X_t^2\in A\, ,\, t\leq T_0^2),\label{11}\\
		P(X_t^1\in A\,,\,t>T_0^1)&=P(X_t^2\in A\,,\,t>T_0^2).\label{22}
	\end{align}
	Equality \eqref{11} follows from the pathwise uniqueness before hitting zero so that we only need to verify \eqref{22}. Using the defining equation for the $X^i$ and the definition of $\tilde X^i$ above, one can rewrite
	\begin{align*}
		P(X_t^i\in A\,,\,t>T_0^i)=P(\tilde X^i_{t-T_0^i}\in A\,,\,t>T_0^i).
	\end{align*}
	Integrating out $P(T_0^i\in ds)$ (note that $P(T_0^1\in ds)=P(T_0^2\in ds)$ have same law as shown in Step 1b)) we obtain \eqref{22} from Step 1d) since $\tilde X^i_0=0$.
\smallskip

\textbf{Step 2):} The uniqueness of one-dimensional marginals for weak solutions to \eqref{Equation} that spend zero time at zero and satisfy the symmetry condition \eqref{symc} now implies the Markov property for the weak solution $Z$ by martingale problem arguments such as in the proof of Theorem 4.4.2 of \cite{EthierKurtz}. The required measurability $z\mapsto P^z$ is a consequence of the construction: the measurability (even continuity) in the initial condition holds for the $Z^m$ and since the pointwise limit of measurable functions remains measurable, the measurability for the limit follows.

	\end{proof}

	\begin{proposition}\label{Pro2}
		Denote by $Z^z$ a limiting point of the tight sequence $(Z^{m})_{m\in\N}$ with initial conditions $z\in \R$ constructed in Lemma \ref{Lemma1}. Then the family $(Z^z)_{z\in\R}$ is a real-valued self-similar Markov family with Lamperti-Kiu quintuple $(a,\sigma^2,\Pi,q,V)$.
	\end{proposition}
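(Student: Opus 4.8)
The proof has three ingredients: the strong Markov property, the scaling relation \eqref{self_sim}, and the identification of the quintuple. The Markov property together with the measurability $z\mapsto P^z$ is already Proposition \ref{Markov}; the plan is to promote it to the strong Markov property by the same martingale-problem argument applied at stopping times, to derive self-similarity from a scaling computation on the SDE \eqref{Equation} combined with the weak uniqueness established in the proof of Proposition \ref{Markov}, and finally to read off the Lamperti--Kiu quintuple from the process absorbed at zero via Proposition \ref{PropositionMain}.

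\emph{Self-similarity.} Fix $c>0$ and put $\tilde Z_t:=c^{-1}Z^z_{ct}$. First I would check that $\tilde Z$ is again a weak solution of \eqref{Equation}, now started from $c^{-1}z$, driven by the rescaled Brownian motion $\tilde B_v:=c^{-1/2}B_{cv}$ and by $\tilde{\mathcal N}$, the image of $\mathcal N$ under $(s,r,u)\mapsto(s/c,\,cr,\,u)$. This is a homogeneity bookkeeping exercise: the drift coefficient $\Psi(1)+\int_{-1}^0(u-1)V(du)$ is unchanged because $\sign(Z_{cv})=\sign(\tilde Z_v)$ and $\int_0^{ct}\sign(Z_s)\,\dd s=c\int_0^t\sign(\tilde Z_v)\,\dd v$; the Brownian term matches after the scaling of $B$ (compare angle brackets, as in the self-similarity of the squared-Bessel-type SDE \eqref{ddddd}); and for the Poissonian term one notes that $\tilde{\mathcal N}$ has intensity $\dd v\,\dd r\,\bar\Pi(\dd u)$ since $(c^{-1}\dd s)(c\,\dd r)=\dd s\,\dd r$, while the accelerating level transforms as $1/|Z_{s-}|=(1/c)\cdot 1/|\tilde Z_{v-}|$, which is compensated exactly by the factor $c$ in the $r$-coordinate, so that jumps, acceleration and compensator agree term by term. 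Since $\tilde Z$ inherits the symmetry of the family $(Z^z)_{z\in\R}$ and, by Corollary \ref{Cor1}, spends no time at zero, it lies in the class of symmetric, time-at-zero-free weak solutions of \eqref{Equation} started from $c^{-1}z$, on which the proof of Proposition \ref{Markov} established uniqueness of the law. Hence the law of $\tilde Z$ equals $P^{c^{-1}z}$, which is precisely \eqref{self_sim}.

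\emph{Strong Markov and the quintuple.} The strong Markov property is obtained exactly as the Markov property in Proposition \ref{Markov}: at a stopping time the post-stopping process is again a symmetric weak solution of \eqref{Equation} not spending time at zero, so its conditional law is pinned down by the same weak uniqueness, and the standard reasoning of Theorem 4.4.2 of \cite{EthierKurtz} applies verbatim. Thus $(Z^z)_{z\in\R}$ is a symmetric real-valued self-similar strong Markov family. To identify the quintuple, consider the absorbed process $Z^\dag:=(Z_{t\wedge T_0})_{t\geq 0}$: it is a symmetric $\R_*$-valued self-similar Markov process, it solves \eqref{Equation} for $t\leq T_0$, and it satisfies Assumption \textbf{(A)} since its jumps act by $Z_{s-}\mapsto Z_{s-}u$ with $|u|\leq 1$. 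By Proposition \ref{PropositionMain}(II) the solution of \eqref{Equation} for $t\leq T_0$ built from $(a,\sigma^2,\Pi,q,V)$ is the $\R_*$-valued self-similar Markov process whose Lamperti--Kiu quintuple is $(a,\sigma^2,\Pi,q,V)$, and Proposition \ref{PropositionMain}(I) is a bijection; therefore $Z^\dag$ has Lamperti--Kiu quintuple $(a,\sigma^2,\Pi,q,V)$, and by Definition \ref{defi}(b) so does $Z$. Moreover $Z$ leaves zero continuously, since a jump from $Z_{s-}=0$ has size $0\cdot(u-1)=0$.

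\emph{Main obstacle.} The only step requiring genuine care is the scaling: one must verify that the rescaled noise — in particular the reparametrisation of the Poisson random measure carrying the $1/|Z_{s-}|$ acceleration — reproduces the SDE \eqref{Equation} verbatim, and that the class of symmetric solutions not spending time at zero, on which the uniqueness of Proposition \ref{Markov} rests, is stable under this scaling and under the Markov shift at stopping times. Once this is in hand, both self-similarity and the strong Markov property reduce to the weak uniqueness already available, and the quintuple is identified by a direct appeal to Proposition \ref{PropositionMain}.
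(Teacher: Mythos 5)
Your proposal is correct and follows essentially the same route as the paper: rescale the driving noises ($\bar B_t=c^{-1/2}B_{ct}$ and $\bar{\mathcal N}(ds,dr,du)=\mathcal N(c\,ds,c^{-1}dr,du)$), check that $c^{-1}Z^z_{ct}$ solves \eqref{Equation} from $z/c$, and invoke the uniqueness class of symmetric solutions not spending time at zero from the proof of Proposition \ref{Markov}. The only imprecision is that Step 1e) of that proof gives uniqueness of \emph{one-dimensional} marginals rather than of the full law, so one must (as the paper does) combine it with the Markov property to identify the finite-dimensional distributions — your structure accommodates this without change.
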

	
	\begin{proof}
	 For $c>0$ fixed we define $\bar Z^z_t:=\frac{1}{c}Z^z_{ct}$, $t\geq 0$. Since $Z$ is a weak solution to \eqref{Equation}, $\bar Z^z$ satisfies
	\begin{align*}
		\bar Z^z_t&=\frac{z}{c}
                       + \bigg(\Psi(1)+\int_{-1}^0 (u-1) V(du)\bigg) \frac{1}{c}\int_0^{ct} \sign(Z^z_s)\,ds
                       + \frac{\sigma}{c} \int_0^{ct}\sqrt{|Z^z_s|} \,dB_s\\
     &\quad    + \frac{1}{c} \int_0^{ct}\int_0^{\frac{1}{|Z^z_{s-|}}}\int_{-1}^1
                     Z^z_{s-}(u-1)
                       (\mathcal N-\mathcal N')(ds,dr,du),
      \qquad t\geq 0,
	\end{align*}
	almost surely. By Revuz and Yor \cite[Proposition V.1.5]{RY} we have the almost sure identity
 \[
   \frac{\sigma}{c} \int_0^{ct}\sqrt{|Z^z_s|} \,d B_s
       = \sigma\int_0^{t}\sqrt{c^{-1}|Z^z_{cs}|} d \big( c^{-1/2}B_{cs}\big),
         \qquad t\geq 0.
 \]
 Next, we use the analogue almost sure identity
  \begin{align*}
	 &\quad         \int_0^{ct}\int_0^{\frac{1}{|Z^z_{s-}|}}\int_{-1}^1 Z^z_{s-}(u-1) (\mathcal N-\mathcal N')(ds,dr,du)\\
            & = \int_0^t\int_0^{\frac{1}{\frac{1}{c}|Z^z_{(cs)-}|}}\int_{-1}^1Z^z_{(cs)-}(u-1)  (\mathcal N-\mathcal N')(c d s,c^{-1}dr,du),
               \qquad t\geq 0.
 \end{align*}
Motivated by the above two identities, we define the Wiener process
 $\bar B_t:=\frac{1}{\sqrt{c}}B_{ct}$, $t\geq 0$, and the independent Poisson random measures  $\bar\cN$ on $(0,\infty)\times (0,\infty)\times [-1,1]$
 by $\bar\cN(d s,d r,d u):=\cN(cd s,c^{-1}d r,d u)$.
 It follows directly from the definition of a Poisson random measure that
 $\bar{\mathcal N}$ is a Poisson random measure with the
 same intensity measure as $\mathcal N$.
With these definitions, the above calculation leads to
	\begin{align*}
     \bar Z^z_t   &= \frac{z}{c} +\bigg(\Psi(1)+\int_{-1}^0 (u-1) V(du)\bigg)\int_0^{t} \sign(\bar Z^z_s)\,ds
          +\sigma \int_0^{t} \sqrt{|\bar Z^z_{s}|} d \bar B_{s}\\
          &\quad+ \int_0^{t}\int_0^{\frac{1}{|\bar Z^z_{s-}|}}\int_{-1}^1\bar Z^z_{s-}(u-1)(\bar{\mathcal N} - \bar{\mathcal N}')(d s,d r,d u),
               \quad t\geq 0.
	\end{align*}
	Hence, $Z^{z/c}$ and $\bar Z^z$ satisfies the SDE \eqref{Equation} with initial condition $z/c$ and both do not spend time at zero and satisfies the symmetry condition \eqref{sym2}. But then the identity of one-dimensional marginals holds due to Step 1e) of the proof of Proposition \ref{Markov}. Finally, the Markov property proved in Proposition \ref{Markov} implies the identification of the finite-dimensional marginals and the self-similarity is proved.
	\smallskip
	
	The statement about the Lamperti-Kiu quintuple is a direct consequence of the construction of $Z^z$ via the SDE \eqref{Equation} and the definition of Lamperti-Kiu quintuples.
\end{proof}

\subsection{Proof of Theorem \ref{TheoremMain}}\label{ProofEnde}
Let us start with a simple reformulation of the Condition \eqref{cramer} in a restrictive setting:
	\begin{lemma}\label{L0}
		Suppose $(P^z)_{z\geq 0}$ is a positive self-similar Markov process that only jumps towards the origin. Then there is a unique self-similar extension $(P^z)_{z\geq 0}$ of $(P^z)_{z> 0}$ under which the canonical process leaves zero continuously precisely if  $\Psi(1)>0$.
	\end{lemma}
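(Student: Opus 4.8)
The plan is to translate the hypothesis into a statement about the Lamperti-transformed L\'evy process, invoke the known classification of self-similar extensions recalled in the introduction, and then collapse the relevant criterion to the inequality $\Psi(1)>0$ by convexity of $\Psi$.

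First I would note that in Lamperti's representation \eqref{LT} the time change is continuous and increasing and the map $y\mapsto ze^{y}$ is strictly increasing, so a jump of $Z$ towards the origin corresponds to a downward jump of $\xi$ (a jump of $Z$ \emph{to} $0$ corresponding to killing, i.e.\ a jump of $\xi$ to $-\infty$). Hence the assumption that $Z$ only jumps towards the origin is equivalent to $\xi$ being spectrally negative. For such a process, killed at rate $q\ge0$, the Laplace exponent $\Psi(\theta)=\log\E(e^{\theta\xi_1};\zeta>1)$ is finite for every $\theta\ge0$, is convex on $[0,\infty)$, and satisfies $\Psi(0)=-q\le0$.

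Next I would split along Lamperti's dichotomy $T_0<\infty$ a.s.\ versus $T_0=\infty$ a.s. In the first case the Rivero--Fitzsimmons result (\cite{R1,R2,F}), quoted as Condition \eqref{cramer}, states that the self-similar extension leaving zero continuously exists and is unique exactly when $\Psi$ has a zero in $(0,1)$; I would then check, using that $\Psi$ is convex with $\Psi(0)\le0$ and (outside the degenerate case $\xi\equiv0$) $\Psi(\theta)\to+\infty$, that $\Psi$ possesses a smallest strictly positive zero $\theta^{\ast}$, and that $\theta^{\ast}<1\iff\Psi(1)>0$: one implication is the intermediate value theorem, the other is that on $(\theta^{\ast},\infty)$ a convex function vanishing at $\theta^{\ast}$ and eventually tending to $+\infty$ is strictly positive. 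In the second case $q=0$ and $\xi$ does not drift to $-\infty$, so $\Psi(0)=0$ and $\Psi'(0+)\ge0$, whence convexity gives $\Psi(1)>0$ unless $\xi\equiv0$ (where $Z$ is constant, admits no extension leaving zero, and $\Psi(1)=0$); simultaneously, spectral negativity makes $\xi$ creep over every positive level, so the overshoot in \eqref{over} vanishes identically, Condition \eqref{over} holds automatically, and by \cite{BC,BY,CC,CKPR} the self-similar extension leaving zero continuously exists and is unique. Thus in both regimes "existence and uniqueness of the continuous extension" and "$\Psi(1)>0$" are equivalent, which is the assertion.

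The step I expect to be most delicate is the convexity bookkeeping at the boundary cases --- $\xi$ deterministic, $\xi$ the negative of a subordinator (so $Z$ is non-increasing, $T_0<\infty$, no extension leaving zero, consistently with $\Psi(1)<0$), and $\xi$ whose Laplace exponent fails to tend to $+\infty$ --- together with checking that the two cited classification theorems genuinely dovetail across the dichotomy and both produce an extension leaving zero \emph{continuously} rather than one that jumps away from zero. Once these are settled the reduction to $\Psi(1)>0$ is straightforward.
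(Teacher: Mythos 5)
Your proposal is correct and takes essentially the same route as the paper: translate "only jumps towards the origin" into spectral negativity of $\xi$, split along Lamperti's dichotomy, reduce to the Cram\'er condition \eqref{cramer} in the transient-to-zero case and to the (trivially satisfied) overshoot condition \eqref{over} otherwise, and use convexity of $\Psi$ to collapse each criterion to $\Psi(1)>0$. Your extra bookkeeping on the degenerate cases ($\xi\equiv 0$, negative subordinator) is finer than the paper's but does not change the argument.
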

	\begin{proof}
		The spectrally negative assumption implies (for instance by the L\'evy-Khintchin representation) that $\Psi(\lambda)=\log \E[e^{\lambda \xi_1}]<\infty$ for all $\lambda\geq 0$. Note furthermore that, if well-defined, the Laplace exponent $\lambda\mapsto \Psi(\lambda)$ is a convex function on $\R_{\geq 0}$.
		\smallskip
		
		First, suppose $\xi$ drifts to $-\infty$, so that the existence of the claimed extension is equivalent to \eqref{cramer}. Since the right-derivative $\Psi'(0)$ equals $\E[\xi_1]<0$, there is some $s>0$ such that $\Psi(s)<0$. Thus, the convexity of $\lambda\mapsto \Psi(\lambda)$ implies that \eqref{cramer} is equivalent to $\Psi(1)>0$.
		\smallskip
		
		Next, suppose $\xi$ does not drift to $-\infty$. Then the existence of the claimed extension is equivalent to \eqref{over} which is trivially fulfilled since all overshoots are zero because all jumps are negative. At the same time the convexity of $\Psi$ and $\Psi'(0)=\E[\xi_1]\geq 0$ imply $\Psi(\lambda)>0$ for any $\lambda>0$. Hence, the claimed equivalence is trivial in this latter case.
	\end{proof}

To find a necessary condition for $\R_*$-valued self-similar Markov processes to have an extension that leaves zero continuously we want to apply Condition \eqref{cramer} for a suitably derived positive self-similar Markov process. Since we are only interested in symmetric self-similar processes the suitable choice is the absolute value.
	\begin{lemma}\label{L2}
			Suppose $(P^z)_{z\in\R}$ is a symmetric $\R_*$-valued self-similar Markov process with Lamperti-Kiu quintuple $(a,\sigma^2,\Pi,q,V)$ that satisfies Assumption \textbf{(A)} and define $(|P|^z)_{z\geq 0}$  as the law of $|Z|$ under $(P^z)_{z\in\R}$.
		\begin{itemize}
			\item[\textbf{(a)}]  $(|P|^z)_{z\geq 0}$   is a positive self-similar Markov process.
			\item[\textbf{(b)}] The Lamperti-transformed L\'evy process $\xi^{|P|}$ of $(|P|^z)_{z\geq 0}$ satisfies
			\begin{align*}
				\Psi^{|P|}(1)=\Psi(1)+\int_{-1 }^0 (|u|-1) V(du),
			\end{align*}
			where $\Psi$ is the Laplace exponent of the L\'evy process $\xi$ with triplet $(a,\sigma^2,\Pi)$ killed at rate $q$.
				
		\end{itemize}
		
	\end{lemma}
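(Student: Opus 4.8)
The plan is to look at the absolute value $|Z|$ and recognise it as the Lamperti-type SDE \eqref{SDELeifMatyas} for positive self-similar Markov processes. By Proposition \ref{PropositionMain}, a symmetric $\R_*$-valued self-similar Markov process $Z$ with Lamperti-Kiu quintuple $(a,\sigma^2,\Pi,q,V)$ satisfying Assumption \textbf{(A)} has, on $[0,T_0)$, the law of a strong solution of the SDE \eqref{Equation} (and is trapped at $0$ afterwards), so for the purpose of computing the law of $|Z|$ we may take $Z$ to be that solution. First I would run the ``singular It\=o formula'' argument of Step 1a) of the proof of Proposition \ref{Markov} (itself a variant of the argument in the proof of Corollary \ref{Cor1}): apply It\=o's formula with $f(x)=x^2$ to \eqref{Equation} to get the semimartingale equation for $Z_t^2$, and then pass from $Z_t^2$ to $\sqrt{Z_t^2}=|Z_t|$ by approximating $\sqrt{\,\cdot\,}$ by $\sqrt{\,\cdot\,+\eps}$ and letting $\eps\downarrow 0$. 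Since $Z$ takes values in $\R_*$ strictly before $T_0$, no local-time term at $0$ appears on $[0,T_0)$ and the resulting equation for $|Z|$ is exactly \eqref{absv} (restricted to $t\le T_0$, with $|Z|$ frozen after), in particular with the genuinely linear drift $\big(\Psi(1)+\int_{-1}^0(|u|-1)\,V(du)\big)\,t$.

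Next I would replace $B$ by $\bar B_t=\int_0^t\sign(Z_s)\,dB_s$, a Brownian motion by L\'evy's characterisation, and relabel the jump coordinate through $u\mapsto|u|$; because the jumps $|Z_{s-}|(|u|-1)$ are non-positive under Assumption \textbf{(A)}, the equation for $|Z|$ is then precisely of the form \eqref{SDELeifMatyas} with the process absorbed at $0$. By Example \ref{e1} (equivalently, the results of \cite{DB}), such an equation is exactly the reformulation of Lamperti's transformation of a positive self-similar Markov process whose Lamperti-transformed L\'evy process has only negative jumps, and it admits strong existence and pathwise uniqueness; moreover in that reformulation the drift coefficient of \eqref{SDELeifMatyas} equals $\Psi^{|P|}(1)$, the value at $1$ of the Laplace exponent of the Lamperti-transformed L\'evy process. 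Hence $|Z|$ is a positive self-similar Markov process, proving part \textbf{(a)}, and matching the two expressions for the drift coefficient (using uniqueness of the canonical decomposition of the semimartingale $|Z|$) yields $\Psi^{|P|}(1)=\Psi(1)+\int_{-1}^0(|u|-1)\,V(du)$, which is part \textbf{(b)}.

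The main obstacle, such as it is, is the rigorous justification that the singular It\=o step contributes no semimartingale local time of $|Z|$ at $0$; this is handled exactly as in the proof of Corollary \ref{Cor1} via the $\sqrt{\,\cdot\,+\eps}$-approximation together with monotone and dominated convergence, and here it is in fact lighter because $|Z|$ does not visit $0$ on $[0,T_0)$ at all and is frozen afterwards. A secondary bookkeeping point: the conclusion that $|Z|$ is a \emph{positive self-similar Markov} process, rather than merely a solution of an SDE, should be based on the strong Markov property of $Z$ or, equivalently, on pathwise uniqueness for \eqref{SDELeifMatyas} from \cite{DB}; the self-similarity of $|Z|$ is then inherited from that of $Z$ (alternatively read off from the scaling structure of the coefficients in \eqref{SDELeifMatyas}).
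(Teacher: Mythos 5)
Your proposal is correct and follows essentially the same route as the paper: take absolute values of the SDE \eqref{Equation} via the singular It\=o argument of Corollary \ref{Cor1}, rewrite the resulting equation (with $\bar B_t=\int_0^t\sign(Z_s)\,dB_s$ and the jump coordinate relabelled through $u\mapsto|u|$, which works because Assumption \textbf{(A)} keeps the jumps non-positive) in the form \eqref{SDELeifMatyas}, and read off $\Psi^{|P|}(1)$ as the drift coefficient. The only cosmetic difference is in part \textbf{(a)}, where the paper simply inherits the Markov property from the symmetry of $(P^z)$ rather than invoking uniqueness for \eqref{SDELeifMatyas}; both are valid.
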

	\begin{proof}
		\textbf{(a)} The  Markov property for $(|P|^z)_{z\geq 0}$ is inherited from $(P^z)_{z\in\R}$ due to the symmetry assumption. The self-similarity carries over trivially. 
		\smallskip
		
 		\textbf{(b)} To determine $\Psi^{|P|}(1)$ we use Proposition \ref{PropositionMain2} twice. First recall from Proposition \ref{PropositionMain} that $P^z$ can be expressed as
		\begin{align*}
			Z_t&=z+\bigg(\Psi(1)+\int_{-1}^0 (u-1) V(du)\bigg)\int_0^t\sign(Z_s)\,ds+\sigma\int_0^t \sqrt{|Z_s|}\,dB_s\\
			&\quad+\int_0^t \int_0^{\frac{1}{|Z_{s-}|}}\int_{-1}^1 Z_{s-}(u-1)(\mathcal{N-N'})(ds,dr,du),\quad t\leq T_0.
	\end{align*}
	Taking absolute values as in the proof of Corollary \ref{Cor1} we find that $|P|^z$ is given by
		\begin{align*}
		|Z_t|&=\bigg(\Psi(1)+\int_{-1}^0 (|u|-1) V(du)\bigg)\,t+\sigma\int_0^t \sqrt{|Z_s|}\,d\bar B_s\\
			&\quad+\int_0^t\int_0^{\frac{1}{|Z_{s-}|}}\int_{-1}^1 |Z_{s-}|(|u|-1)(\mathcal{N-N'})(ds,dr,du),\quad t\leq T_0,
		\end{align*}
		with the Brownian motion $\bar B_t:=\int_0^t \sign(Z_s)\,dB_s$. Equivalently, we can write
		\begin{align}\label{44}
			\begin{split}
			|Z_t|
			&=\bigg(\Psi(1)+\int_{-1}^0 (|u|-1) V(du)\bigg)\,t+\sigma\int_0^t \sqrt{|Z_s|}\,d\bar B_s\\
			&\quad+\int_0^t\int_0^{\frac{1}{|Z_{s-}|}}\int_{0}^1 |Z_{s-}|(u-1)(\mathcal{\bar N-\bar N'})(ds,dr,du),\quad t\leq T_0,
		\end{split}
		\end{align}
		where $\bar{ \mathcal N}$ has intensity $ds\otimes dr\otimes \big(\bar\Pi(du)+V(-du)\big)$ on $(0,\infty)\times(0,\infty)\times(0,1)$. Comparing with \eqref{SDELeifMatyas} we can read off the L\'evy triplet for $\xi^{|P|}$ and in particular the Laplace exponent evaluated at $1$.
	\end{proof}

	We can now finish the proof of our main result.
	\begin{proof}[Proof of Theorem \ref{TheoremMain}]		
		Recall from Proposition \ref{PropositionMain} that the $\R_*$-valued self-similar symmetric Markov families obtained from real-valued self-similar Markov families by absorption at zero are completely characterized by Lamperti-Kiu quintuples $(a,\sigma^2,\Pi,q,V)$. \\
		To see that 			Condition \eqref{Condition} is necessary we apply Lemmas \ref{L0} and \ref{L2}: Suppose $(P^z)_{z\in\R}$ is a real-valued self-similar Markov process that leaves zero continuously. Then the Markov family $(|P^\dag|^z)_{z\geq 0}$ obtained by absorption at zero is a positive self-similar Markov family with a self-similar extension that leaves zero continuously. The Laplace exponent of the Lamperti transformed L\'evy process satisfies
			\begin{align*}
				\Psi^{|P^\dag|}(1)=\Psi(1)+\int_{-1}^0 (|u|-1) V(du)
			\end{align*}
			which, as we showed in Lemma \ref{L0} has to be strictly positive. 		
		\smallskip
		
	Conversely, if Condition \eqref{Condition} is satisfied for a given quintuple $(a,\sigma^2,\Pi,q,V)$, then by Proposition \ref{Pro2}, we constructed in Section \ref{SectionConstruction} a real-valued self-similar Markov process with Lamperti-Kiu triplet $(a,\sigma^2,\Pi,q,V)$. Furthermore, the solutions $Z^z$ leave zero continuously since the integrand of the Poissonian integral is zero at zero.
		\end{proof}
	
\section*{Acknowledgement}
	I would like to thank Jean Bertoin and Zenghu Li for interesting discussions on the subject and Matyas Barczy for careful reading of earlier manuscripts. Furthermore, I thank Beijing Normal University for very kind hospitality where part of this work was carried out.

\end{document}